% -----------------------------------------------------------------------
\documentclass[12pt]{article}
% -----------------------------------------------------------------------
%\usepackage{times}
%\usepackage[a4paper, margin=1.5in]{geometry}
%\usepackage[a4paper , total={8.5in, 11in}, margin=0.5in]{geometry}

\usepackage{amsopn,amssymb,amsthm,amsmath,bm}
\usepackage{paralist}
\usepackage{algorithm,algorithmic}
\usepackage{color}
\usepackage{graphicx}
\usepackage{comment}
\usepackage{hyperref}
\hypersetup{
     colorlinks   = true,
     linkcolor    = blue,
     citecolor    = green
}
\usepackage{multirow}
\usepackage{float}
\usepackage{subfigure}
\usepackage{enumitem}
\usepackage[style=numeric,natbib=true,backend=bibtex, sortcites=true]{biblatex}

\usepackage{thmtools}
\usepackage{thm-restate}
\usepackage{cleveref}

\bibliography{references}
\usepackage{framed}
%\usepackage{url}

% -----------------------------------------------------------------------
\setlength{\textwidth}{6.5in}
\setlength{\textheight}{9in}
\setlength{\oddsidemargin}{0in}
\setlength{\evensidemargin}{0in}
\setlength{\topmargin}{-0.8in}

\newlength{\defbaselineskip}
\setlength{\defbaselineskip}{\baselineskip}
\setlength{\marginparwidth}{0.8in}

\usepackage[a4paper , total={8.5in, 11in}, margin=1in]{geometry}

%%%%%%%%%%%%%%%%%%%%%%%%%%%%%%%%%%%%%%%%%%%%%%%%%%%%%%%%%%%%%%%%%%%%%%%%%%%%%%%%%%%%%%%%%%%%%
%\usepackage{fullpage}

%%%%%%%%%%%%%%%%%%%%%%%%%%%%%%%%%%%%%%%%%%%%%%%%%%%%%%%%%%%%%%%%%%%%%%%%%%%%%%%%%%%%%%%%%%%%%
\usepackage{enumitem}
\setlist[enumerate,1]{leftmargin=*,wide=0em, label = {\bfseries \roman*.}}
\setlist[itemize,1]{leftmargin=*,wide=0em}
%%%%%%%%%%%%%%%%%%%%%%%%%%%%%%%%%%%%%%%%%%%%%%%%%%%%%%%%%%%%%%%%%%%%%%%%%%%%%%%%%%%%%%%%%%%%%

 % Real numbers
\newcommand\bbR{\ensuremath{\mathbb{R}}} % Real numbers
 % Real numbers
 % Integers
\newcommand\bbE{\ensuremath{\mathbb{E}}} % Expectation
\DeclareMathOperator*{\diag}{diag} % Diagonal matrix
 % Sign
 % Variance
 % Covariance
 % Indicator
% imagineary numbers
\newcommand{\Abs}[1]{\left |#1\right|}
\newcommand{\norm}[1]{{\left\|#1\right\|}}

\newcommand{\Prob}[1]{{\Pr}\left(#1\right)}

\newtheorem{theorem}{Theorem}

\newtheorem{definition}{Definition}
\newtheorem{lemma}{Lemma}

\newtheorem{assumption}{Assumption}

\newtheorem{condition}{Condition}

\newcommand*\lin[1]{\langle #1\rangle}
\newcommand {\Ex} { {\mathbb E} }

\makeatletter 

\makeatother

\newif\ifwithcomments
\withcommentstrue
%\withcommentsfalse
\ifwithcomments
\newcommand{\xxx}[1]{{\color{red} (#1)}}
\else
\newcommand{\xxx}[1]{}
\fi

\newcommand{\red}[1]{\textcolor{red}{#1}}

\DeclareMathOperator{\bigO}{\mathcal{O}}

\def\lmin{\lambda_{\min}}
\newcommand{\vv}[1] {\mathbf{#1}}

\def\u{\vv{u}}

\def\p{\vv{p}}

\def\s{\vv{s}}

\def\x{\vv{x}}
\def\y{\vv{y}}
\def\w{\vv{w}}

\def\A{\vv{A}}
\def\B{\vv{B}}

\def\a{\vv{a}}

\def\X{\vv{X}}
\def\Y{\vv{Y}}
\def\U{\vv{U}}
\def\V{\vv{V}}

\def\S{\vv{S}}

\def\H{\vv{H}}

% -----------------------------------------------------------------------

\begin{document}
\title{\Large Newton-Type Methods for Non-Convex Optimization Under Inexact Hessian Information}
\author{
	Peng Xu
	\thanks{
		Institute for Computational and Mathematical Engineering,
		Stanford University,
		%Stanford, CA 94305.
		Email: pengxu@stanford.edu
	}
	\and
	Fred Roosta
	\thanks{
		School of Mathematics and Physics, University of Queensland, Brisbane, Australia, and 
		International Computer Science Institute, Berkeley, USA,     
		Email: fred.roosta@uq.edu.au
	}
	\and
	Michael W. Mahoney
	\thanks{
	International Computer Science Institute and Department of Statistics, University of California at Berkeley,    
	Email: mmahoney@stat.berkeley.edu
}
}

\date{\today}
\maketitle
	
%%%%%%%%%%%%%%%%%%%%%%%%%%%%%%%%%%%%%%%%%%%%%%%
% 				 	Abstract   	 					          %	
%%%%%%%%%%%%%%%%%%%%%%%%%%%%%%%%%%%%%%%%%%%%%%%
\begin{abstract}
We consider variants of trust-region and adaptive cubic regularization methods for non-convex optimization, in which the Hessian matrix is approximated. Under certain condition on the inexact Hessian, and using approximate solution of the corresponding sub-problems, we provide iteration complexity to achieve $ \epsilon $-approximate second-order optimality which have been shown to be tight. 
Our Hessian approximation condition offers a range of advantages as compared with the prior works and allows for direct construction of the approximate Hessian with a priori guarantees through various techniques, including randomized sampling methods. In this light, we consider the canonical problem of finite-sum minimization, provide appropriate uniform and non-uniform sub-sampling strategies to construct such Hessian approximations, and obtain optimal iteration complexity for the corresponding sub-sampled trust-region and adaptive cubic regularization methods. 
\end{abstract}

% -----------------------------------------------------------------------
%     background
% -----------------------------------------------------------------------
\section{Introduction}
\label{sec:intro}
Consider the generic unconstrained optimization problem 
\begin{align}
\label{eq:obj}
\min_{\x \in \bbR^d} F(\x), \tag{{\bf P0}}
\end{align}
where $ F: \mathbb{R}^{d} \rightarrow \mathbb{R} $ is \emph{smooth} and \emph{non-convex}.  
%Over the last few decades, many optimization algorithms have been developed to solve \eqref{eq:obj}, e.g., \cite{bertsekas1999nonlinear,nocedal2006numerical}. 
Faced with the large-scale nature of modern ``big-data'' problems, many of the classical optimization algorithms might prove to be inefficient, if applicable at all. In this light, many of the recent research efforts have been centered around designing variants of classical algorithms which, by employing suitable \emph{approximations} of the gradient and/or Hessian, improve upon the cost-per-iteration, while maintaining the original iteration complexity. 
%Indeed, designing variants of classical methods which can strike a balance between per-iteration costs and iteration complexity has been at center-stage in fueling the research in optimization for machine learning and data analysis applications. 
In this light, we focus on trust-region (TR) \cite{conn2000trust} and cubic regularization (CR) \cite{cubic1981Griewank}, two algorithms which are considered as among the most elegant and theoretically sound general-purpose Newton-type methods for non-convex problems. 

In doing so, we first consider \eqref{eq:obj}, and study the theoretical convergence properties of variants of these two algorithms in which, under favorable conditions, Hessian is suitably approximated.
We show that our Hessian approximation conditions, in many cases, are weaker than the existing ones in the literature. In addition, and in contrast to some prior works, our conditions allow for efficient constructions of the inexact Hessian with a priori guarantees via various approximation methods, of which Randomized Numerical Linear Algebra (RandNLA), \cite{mahoney2011randomized,drineas2016randnla}, techniques are shown to be highly effective.
%Such relaxed condition allows for efficient constructions of the inexact Hessian via various simple approximation methods, of which randomized techniques are shown to be particularly effective. 

Subsequently, to showcase the application of randomized techniques for construction of the approximate Hessian, we consider an important instance of \eqref{eq:obj}, i.e., large-scale \emph{finite-sum} minimization, of the form
\begin{align}
\label{eq:obj_sum}
\min_{\x \in \bbR^d} F(\x) \triangleq \frac{1}{n}\sum_{i=1}^n f_i(\x), \tag{{\bf P1}}
\end{align} 
and its special case
\begin{align}
\label{eq:obj_sum_ERM}
\min_{\x\in\bbR^d} F(\x)  \triangleq \frac{1}{n} \sum_{i=1}^n f_i(\a_i^T\x), \tag{{\bf P2}}
\end{align}
where $ n \gg 1 $, each $ f_{i} $ is a smooth but possibly non-convex function, and $\a_i\in\bbR^d, i = 1,\ldots, n,$ are given.
Problems of the form \eqref{eq:obj_sum} and \eqref{eq:obj_sum_ERM} arise very often in machine learning, e.g., \cite{shalev2014understanding} as well as scientific computing, e.g., \cite{rodoas1,rodoas2}. In big-data regime where $ n \gg 1 $, operations with the Hessian of $ F $, e.g., matrix-vector products, typically constitute the main bottleneck of computations. Here, we show that our relaxed Hessian approximation conditions allow one to draw upon the \emph{sub-sampling} ideas of \cite{roosta2018sub,xu2016sub,bollapragada2016exact}, to design variants of TR and CR algorithms where the Hessian is  \emph{(non-)uniformly} sub-sampled. We then present the theoretical convergence properties of these variants for non-convex finite-sum problems of the form \eqref{eq:obj_sum} and \eqref{eq:obj_sum_ERM}. 

%This paper is mainly motivated by developing novel theory which supports simple constructions of approximate Hessian in practice. Extensive numerical examples demonstrating various practical aspects of our proposed algorithms for solving \eqref{eq:obj_sum} and \eqref{eq:obj_sum_ERM} are, instead, given in \cite{xuNonconvexEmpirical2017}. Specifically, in \cite{xuNonconvexEmpirical2017}, we consider two classes of non-convex optimization problems that arise often in practice, i.e. \emph{non-linear least squares} as well as \emph{deep learning} and present extensive numerical experiments illustrating the empirical performance of the sub-sampled methods considered in this paper on both, \emph{real} and \emph{synthetic} data.

The rest of this paper is organized as follows. 
In Section \ref{sec:notation}, we first introduce the notation and definitions used throughout the paper.
For completeness, in Section \ref{sec:background_related_work}, we give a brief review of trust region (Section \ref{sec:TR}) and cubic regularization (Section \ref{sec:CR}) along with related prior works.
%we give a brief review of the classical Newton-type methods considered in this paper, namely trust region in Section \ref{sec:TR} and cubic regularization in Section \ref{sec:CR}.  
%In the process, we briefly mention the works related to the present paper and in its light, outline, in details, 
Our main contributions are summarized in Section \ref{sec:contributions}.  
Theoretical analysis of the proposed algorithms for solving generic non-convex problem \eqref{eq:obj} are presented in Section \ref{sec:convergence_analysis}. Various randomized sub-sampling strategies as well as theoretical properties of the proposed algorithms for finite-sum minimization problems \eqref{eq:obj_sum} and \eqref{eq:obj_sum_ERM} are given in Section \ref{sec:sub_sampling_and_finite_sum}.
%, we consider finite-sum minimization problems \eqref{eq:obj_sum} and \eqref{eq:obj_sum_ERM}, introduce various randomized sub-sampling strategies as a way to construct inexact Hessian information, and give the corresponding algorithmic convergence guarantees. 
Conclusions and further thoughts are gathered in Section \ref{sec:conclusions}. 

\subsection{Notation and Definitions}
\label{sec:notation}

Throughout the paper, vectors are denoted by bold lowercase letters, e.g., $\vv{v}$, and matrices or random variables are denoted by bold upper case letters, e.g., $\V$. 
$ \vv{v}^{T} $ denotes the transpose of a real vector $\vv{v}$. We use regular lower-case and upper-case letters to denote scalar constants, e.g., $ c $  or $ K $. For two vectors, $ \vv{v},\w $, their inner-product is denoted as $ \lin{\vv{v}, \w}  = \vv{v}^{T} \w$. For a vector $\vv{v}$, and a matrix $\V$, $\|\vv{v}\|$ and $\|\V\|$ denote the vector $\ell_{2}$ norm and the matrix spectral norm, respectively, while $\|\V\|_{F}$ is the matrix Frobenius norm. $\nabla F(\x)$ and $\nabla^{2} F(\x)$ are the gradient and the Hessian of $F$ at $\x$, respectively, and $\mathbb{I}$ denotes the identity matrix. For two symmetric matrices $\A$ and $\B$, $\A \succeq \B$ indicates that $\A-\B$ is symmetric positive semi-definite. The subscript, e.g., $\x_{t}$, denotes iteration counter and $\log(x)$ is the natural logarithm of $x$. The inexact Hessian is denoted by $ \H(\x) $, but for notational simplicity, we may use $ \H_{t} $ to, instead, denote the approximate Hessian evaluated at the iterate $ \x_{t} $ in iteration $ t $, i.e., $\H_{t} \triangleq  \H(\x_{t}) $. Throughout the paper, $\mathcal{S}$ denotes a collection of indices from $\{1,2,\cdots,n\}$, with potentially repeated items and its cardinality is denoted by $|\mathcal{S}|$.

%A very useful property of convex functions is that ``local optimality'' and ``global optimality'' are in fact, the same. Unfortunately, in non-convex settings, this is no longer the case. For example, even optimization of a degree four polynomial can be NP-hard \cite{hillar2013most}. In fact, just checking whether a point is not a local minimum is NP-complete \cite{murty1987some}. Thus, 
Unlike convex functions for which ``local optimality'' and ``global optimality'' are in fact the same, in non-convex settings, we are often left with designing algorithms that can guarantee convergence to approximate local optimality. In this light, throughout this paper, we make use of the following definition of $(\epsilon_g, \epsilon_H)$-Optimality:
\begin{definition}[$(\epsilon_g, \epsilon_H)$-Optimality]
	\label{def:opt} 
	Given $\epsilon_g, \epsilon_H\in(0,1)$, $\x\in\bbR^d$ is an $(\epsilon_g, \epsilon_H)$-optimal solution to the problem \eqref{eq:obj}, if 
	\begin{align}
		\label{eq:2ndopt}
		\|\nabla F(\x)\| \le \epsilon_g, ~~ \lambda_{\min} (\nabla^2 F(\x)) \ge -\epsilon_H.
	\end{align}
\end{definition}
We note that $(\epsilon_g, \epsilon_H)$-Optimality (even with $\epsilon_g = \epsilon_H = 0$) does not necessarily imply closeness to any local minimum, neither in iterate nor in the objective value. However, if the saddle points satisfy the strict-saddle property \cite{ge2015escaping,lee2016gradient}, then an $(\epsilon_g, \epsilon_H)$-optimality guarantees vicinity to a local minimum for sufficiently small $\epsilon_g$ and $\epsilon_H$.

\subsection{Background and Related Work}
\label{sec:background_related_work}

Arguably, the most straightforward approach for \emph{globalization} of many Newton-type algorithms is the application of line-search. However, near saddle points where the gradient magnitude can be small, traditional line search methods can be very ineffective and in fact produce iterates that can get stuck at a saddle point \cite{nocedal2006numerical}. Trust region and cubic regularization methods are two elegant globalization alternatives that, specially recently, have attracted much attention. %In this section, we give a brief review the classical TR and CR algorithms for solving \eqref{eq:obj}. 
%There are many similarities between TR and CR algorithms in terms of their theoretical and algorithmic properties as well as methods for solving their respective sub-problems. 
The main advantage  of these methods is that they are reliably able to take advantage of the direction of negative curvature and escape saddle points. 
%More specifically, if the Hessian at a saddle point contains a negative eigenvalue, these methods can leverage the corresponding direction of negative curvature to obtain decrease in the objective function values. Many problems of interest exhibit saddle points which include negative curvature direction \cite{ge2015escaping,sun2015nonconvex}. 
%As a result, TR and CR methods, if made scalable, can be very effective for solving many large-scale non-convex problems of interest for machine learning and scientific computing. 
In this section we briefly review these algorithms as they pertain to the present paper and mention the relevant prior works. 

\subsubsection{Trust Region}
\label{sec:TR}

TR methods \cite{sorensen1982newton,conn2000trust} encompass a general class of iterative methods which specifically define a region around the current iterate within which they trust the model to be a reasonable approximation of 
the true objective function. 
%They then find the step as a (approximate) minimizer of the model in this region. In effect, they choose the direction and length of the step simultaneously. If a step is not acceptable, they reduce the size of the region and find a new minimizer. 
The most widely used approximating model, which we consider here, is done via a quadratic function.
%obtained from the second-order Taylor expansion of the true objective at the current iterate. 
More specifically, using the current iterate $\x_{t}$, the quadratic variant of TR algorithm finds the next iterate as $\x_{t+1} = \x_{t} + \s_{t}$ where $\s_{t}$ is a solution of the \emph{constrained} sub-problem
\begin{subequations}
\begin{align}
	\label{eq:TR_sub}
	\min \; ~~& m_t(\s) \triangleq \lin{\s, \nabla F(\x_{t})} + \frac{1}{2} \lin{ \s, \nabla^{2} F(\x_{t})\s}  \\
	\text{s.t. } ~~& \|\s\|_{2} \leq \Delta_{t} \nonumber.
\end{align}
Here, $ \Delta_{t} $ is the region in which we ``trust'' our quadratic model to be an acceptable approximation of the true objective for the current iteration. The major bottleneck of computations in TR algorithm is the minimization of the constrained quadratic sub-problem \eqref{eq:TR_sub}, for which numerous approaches have been proposed, e.g., \cite{steihaug1983conjugate,more1983computing,sorensen1997minimization,gould1999solving,lenders2016trlib,erway2009iterative,gould2010solving,hazan2015linear}.

For a smooth non-convex objective and in order to obtain approximate first-order criticality, i.e., $ \|\nabla F(\x_{t})\| \leq \epsilon_{g}$ for some $\epsilon_{g} \in (0,1)$, the complexity of an (inexact) trust-region method, which ensures at least a Cauchy (steepest-descent-like) decrease at each iteration, is shown to be of the same order as that of steepest descent, i.e., $ \mathcal{O}(\epsilon_{g}^{-2}) $; e.g., \cite{gratton2008recursiveI,gratton2008recursiveII,blanchet2016convergence,gratton2017complexity,cartis2012complexity}. Recent non-trivial modifications of the classical TR methods have also been proposed which improve upon the complexity to $ \mathcal{O}(\epsilon_{g}^{-3/2}) $; see \cite{curtis2017trust} and further extensions to a more general framework in \cite{curtis2017inexact}.
These bounds can be shown to be tight \cite{cartis2010complexity} in the worst case. Under a more general algorithmic framework and in terms of objective function sub-optimality, i.e., $ F(\x) - F^{*} \leq \epsilon $, better complexity bounds, in the convex and strongly-convex settings, have been obtained which are of the orders of $ \mathcal{O}(\epsilon_{g}^{-1}) $ and $ \mathcal{O}(\log ({1}/{\epsilon_{g}})) $, respectively \cite{grapiglia2016worst}. 

For non-convex problems, however, it is more desired to obtain complexity bounds for achieving approximate second-order criticality, i.e., Definition \ref{def:opt}. For this, bounds in the orders of $ \mathcal{O}(\max\{\epsilon_{H}^{-1}\epsilon_{g}^{-2}, \epsilon_{H}^{-3}\}) $ and $ \mathcal{O}(\max\{\epsilon_{g}^{-3}, \epsilon_{H}^{-3}\}) $ have been obtained in \cite{cartis2012complexity} and \cite{grapiglia2016worst}, respectively. Similar bounds were also given in \cite{gratton2017complexity} under probabilistic model. %Most these complexities are obtained for trust region-type methods which, in addition to Cauchy decrease, guarantee a descent, at least, as good as that obtained from following the negative curvature direction (if present). 
Bounds of this order have shown to be optimal in certain cases \cite{cartis2012complexity}.

More closely related to the present paper, there have been several results which study the role of derivative-free and probabilistic models in general, and Hessian approximation in particular, e.g., see \cite{cartis2012complexity,conn2009global,chen2015stochastic,blanchet2016convergence,bandeira2014convergence,larson2016stochastic,shashaani2016astro,gratton2017complexity} and references therein.

\subsubsection{Cubic Regularization}
\label{sec:CR}
An alternative to the traditional line-search and TR for globalization of Newton-type methods is the application of cubic regularization. Such class of methods is characterized by generating iterates as $\x_{t+1} = \x_{t} + \s_{t}$ where $\s_{t}$ is a solution of the following \emph{unconstrained} sub-problem 
\begin{align}
	\label{eq:CR_sub}
	\min_{\s \in \mathbb{R}^{d}} \; ~~& m_t(\s) \triangleq \lin{\s, \nabla F(\x_{t})} + \frac{1}{2} \lin{ \s, \nabla^{2} F(\x_{t})\s} + \frac{\sigma_{t}}{3} \|\s\|^{3},
\end{align}
\end{subequations}
where $ \sigma_{t} $ is the cubic regularization parameter chosen for the current iteration. 
%Naively speaking, one can view $ \sigma_{t} $ as the reciprocal of $ \Delta_{t} $. 
As in the case of TR, the major bottleneck of CR involved solving the sub-problem \eqref{eq:CR_sub}, for which various techniques have been proposed, e.g., \cite{cartis2011adaptiveI,carmon2016gradient,bianconcini2015use,agarwal2016finding}.
%the role of the parameter $ \sigma_{t} $ is very similar to the trust-region radius, $ \Delta_{t} $. In fact, one can view $ \sigma_{t} $ as the reciprocal of $ \Delta_{t} $ \cite{cartis2011adaptiveI}. 
%A direct comparison of \cite[Lemmas 3.2 and 3.3]{cartis2011adaptiveII} with \cite[Theorems 6.4.2, 6.4.3]{conn2000trust} also reveals the striking resemblance of the role of the trust region radius, $ \Delta_{t} $, with cubic regularization parameter, $ \sigma_{t} $. 

To the best of our knowledge, the use of such regularization, was first introduced in the pioneering work of \cite{cubic1981Griewank}, and subsequently further studied in the seminal works of \cite{nesterov2006cubic,cartis2011adaptiveI,cartis2011adaptiveII}.%, which provide an in-depth analysis of such regularization methods in a variety of setting. 
From the worst-case complexity point of view, CR has a better dependence on $ \epsilon_{g} $ compared to TR. More specifically, \cite{nesterov2006cubic} showed that, under global Lipschitz continuity assumption on the Hessian, if the sub-problem \eqref{eq:CR_sub} is solved exactly, then the resulting CR algorithm achieves the approximate first-order criticality with complexity of $ \mathcal{O}(\epsilon_{g}^{-3/2})$. These results were extended by the pioneering and seminal works of \cite{cartis2011adaptiveI,cartis2011adaptiveII} to an adaptive variant, which is often referred to as ARC (Adaptive Regularization with Cubics). In particular, the authors showed that the worst case complexity of $\mathcal{O}(\epsilon_{g}^{- 3 / 2})$ can be achieved without requiring the knowledge of the Hessian's Lipschitz constant, access to the exact Hessian, or multi-dimensional global optimization of the sub-problem \eqref{eq:CR_sub}. These results were further refined in \cite{cartis2012complexity} where it was shown that, not only, multi-dimensional global minimization of \eqref{eq:CR_sub} is unnecessary, but also the same complexity can be achieved with mere one or two dimensional search. This $ \mathcal{O}(\epsilon^{-3/2})$ bound has been shown to be tight \cite{cartis2011optimal}. As for the approximate second-order criticality, \cite{cartis2012complexity} showed that at least $ \mathcal{O}(\max\{\epsilon_{g}^{-2}, \epsilon_{H}^{-3}\}) $ is required. With further assumptions on the inexactness of sub-problem solution, \cite{cartis2011adaptiveII,cartis2012complexity} also show that one can achieve $ \mathcal{O}(\max\{\epsilon_{g}^{-3/2}, \epsilon_{H}^{-3}\}) $, which is shown to be tight \cite{cartis2010complexity}. Better dependence on $ \epsilon_{g} $ can be obtained if one assumes additional structure, such as convexity, e.g., see \cite{nesterov2006cubic,cartis2012evaluation} as well as the acceleration scheme of \cite{nesterov2008accelerating}. 

Recently, for (strongly) convex  problems, \cite{ghadimi2017second} obtained sub-optimal complexity for ARC and its accelerated variants using Hessian approximations.
In the context of stochastic optimization problems, \cite{tripuraneni2017stochastic} considers cubic regularization with a priori chosen fixed regularization parameter using both approximations of the gradients and Hessian.
%, while all of our algorithms here are adaptive and practically more appealing. 
Specific to the finite-sum problem \eqref{eq:obj_sum}, and by a direct application of the theoretical results of \cite{cartis2011adaptiveI,cartis2011adaptiveII}, \cite{kohler2017subsampledcubic} presents a sub-sampled variant of ARC, in which the exact Hessian and the gradient are replaced by sub-samples. However, unfortunately, their analysis suffers from a rather vicious circle: the approximate Hessian and gradient are formed based on an \emph{a priori unknown} step which can only be determined after such approximations are formed. 

\subsection{Contributions}
\label{sec:contributions}

In this section, we summarize the key aspects of our contributions. 
In Section \ref{sec:convergence_analysis}, we consider \eqref{eq:obj} and establish the worst-case iteration complexities for variants of trust-region and adaptive cubic regularization methods in which the Hessian is suitably approximated. 
More specifically, our entire analysis is based on the following key condition on the approximate Hessian $\H(\x)$:
\begin{condition}[Inexact Hessian Regularity]
	\label{condition:Hessian_approximation_H} 
	For some $0 < K_{H} < \infty $, $ \epsilon > 0$, the approximating Hessian, $ \H(\x_{t}) $, satisfies
	\begin{subequations}
		\label{eq:Hessian_approximation} 
		\begin{align}
			&\norm{\left(\H(\x_{t}) - \nabla^2 F(\x_{t})\right) \s_{t}}\le \epsilon \cdot\norm{\s_{t}}, \label{eq:Hessian_accuracy_H} \\ % \tag{$ \textbf{H}_{\textbf{a}} $}\\
			&\norm{\H(\x_{t})}\le K_{H}, \label{eq:Hessian_boundedness_H} %\tag{$ \textbf{H}_{\textbf{b}} $}
		\end{align}
	\end{subequations}
	where $ \x_{t} $ and $ \s_{t} $ are, respectively, the iterate and the update at iteration $ t $.
\end{condition}
Under Condition \ref{condition:Hessian_approximation_H}, we show that our proposed algorithms (Algorithms \ref{alg:STR_fg} and \ref{alg:SARC_fg}) achieve \emph{the same worst-case iteration complexity} to obtain \emph{approximate second order critical solution} as that of the exact variants  (Theorems \ref{theorem:STR_main_det}, \ref{theorem:SARC_main_det}, and \ref{theorem:SARC_main_det_optimal}).
%Specifically, not only does \eqref{eq:Hessian_accuracy_H} in many situations impose a milder condition, but also it has an additional advantage that it allows for practical construction of the approximate Hessian with a priori guarantees using a range of techniques. In Section \ref{sec:convergence_analysis}, our emphasis is on obtaining structural convergence results, which can be applied to any Hessian approximation techniques that satisfy Condition \ref{condition:Hessian_approximation_H}, e.g., deterministic approaches using quasi-Newton or finite-difference approximations as well as random constructions such as sketching or sub-sampling.

In Section \ref{sec:sub_sampling_and_finite_sum}, we describe schemes for constructing $ \H(\x_{t}) $ to satisfy Condition \ref{condition:Hessian_approximation_H}. Specifically, in the context of finite-sum optimization framework, i.e., problems \eqref{eq:obj_sum} and \eqref{eq:obj_sum_ERM}, we present various \emph{sub-sampling} schemes to probabilistically ensure Condition \ref{condition:Hessian_approximation_H} (Lemmas \ref{lemma:uniform} and \ref{lemma:nonuniform}). Our proposed randomized sub-sampling strategies guarantee, with high probability, a stronger condition than \eqref{eq:Hessian_accuracy_H}, namely 
\begin{align}
	\label{eq:Hessian_accuracy_H2}
	\|\H(\x) - \nabla^{2} F(\x)\| \leq \epsilon.
\end{align}
It is clear that \eqref{eq:Hessian_accuracy_H2} implies \eqref{eq:Hessian_accuracy_H}.
%The sub-sampling complexities based on \eqref{eq:Hessian_accuracy_H2} do not depend on the step-size, can be set \emph{a priori} and, in turn, be used throughout all iterations.
We then give \emph{optimal} iteration complexities for Algorithms \ref{alg:STR_fg} and \ref{alg:SARC_fg} for optimization of non-convex finite-sum problems where the Hessian is approximated by means of appropriate sub-sampling (Theorems \ref{theorem:STR_main_prob}, \ref{theorem:SARC_main_prob} and \ref{theorem:SARC_main_prob_optimal}).

%To the best of our knowledge, all previous work that considered the inexact Hessian information for trust-region or cubic regularization methods made use of a stronger condition on the approximate Hessian, i.e., 
%\begin{equation}
%\label{eq:Hessian_accuracy_H_quadratic}
%\norm{(\H(\x_t) - \nabla^2 F(\x_t))\s_t } \le C\cdot \norm{\s_t}^2, \tag{{\bf C0}},
%\end{equation}
%for some $C>0$, e.g. \cite{bandeira2014convergence,gratton2017complexity,cartis2011adaptiveI,cartis2011adaptiveII,cartis2012complexity}. Condition \eqref{eq:Hessian_accuracy_H_quadratic} is stronger than the celebrated Dennis-Mor\'{e} \cite{dennis1974characterization}, i.e., 

To establish optimal second-order iteration complexity,  many previous works considered Hessian approximation conditions that, while enjoying many advantages, come with certain disadvantages. Our proposed Condition \ref{condition:Hessian_approximation_H} aims to remedy some of these disadvantages. We first briefly review the conditions used in the prior works, and subsequently highlight the merits of Condition \ref{condition:Hessian_approximation_H} in comparison.

\subsubsection{Conditions Used in Prior Works}
For the analysis of trust-region, many authors have considered the following condition 
\begin{subequations}%{$ \widehat{\textbf{H}}_{\textbf{a}}$}
%\label{eq:Hessian_accuracy_H_quadratic_BOTH}
\begin{align}
\label{eq:Hessian_accuracy_H_quadratic_STR}
\norm{\H(\x_{t}) - \nabla^2 F(\x_{t} + \s)}\le C_{1} \Delta_{t}, \quad \forall \s \in \{\s; \: \norm{\s} \leq \Delta_{t}\}, %\tag{$ \widehat{\textbf{H}}_{\textbf{a}}\textbf{.1} $}
\end{align}
for some $ 0 < C_{1} < \infty $, where $ \Delta_{t} $ is the current trust-region radius, e.g., \cite{bandeira2014convergence,gratton2017complexity}. In \cite{blanchet2016convergence}, condition \eqref{eq:Hessian_accuracy_H_quadratic_STR} is replaced with
\begin{align}
	\label{eq:Hessian_accuracy_H_quadratic_STR_new}
	\norm{\H(\x_{t}) - \nabla^2 F(\x_{t})}\le C_{2} \Delta_{t},
\end{align}
for some $ 0 < C_{2} < \infty $.
%\footnote{The initial two arXiv versions of \cite{blanchet2016convergence} include only first-order complexity results. Condition \eqref{eq:Hessian_accuracy_H_quadratic_STR_new} was introduced in the third version of \cite{blanchet2016convergence}, which added second order complexity analysis and appeared almost a year after the original submission of this work, during the reviewing process.}
In fact, by assuming Lipschitz continuity of Hessian, it is easy to show that \eqref{eq:Hessian_accuracy_H_quadratic_STR} and \eqref{eq:Hessian_accuracy_H_quadratic_STR_new} are equivalent, in that one implies the other, albeit with modified constants.  
We also note that \cite{bandeira2014convergence,gratton2017complexity,blanchet2016convergence} study a more general framework under which the entire sub-problem model is probabilistically constructed and approximation extends beyond just the Hessian.

For cubic regularization, the condition imposed on the inexact Hessian is often considered as 
\begin{align}
\label{eq:Hessian_accuracy_H_quadratic_SARC}
\norm{\left(\H(\x_{t}) - \nabla^2 F(\x_{t})\right) \s_{t}}\le C_{3} \norm{\s_{t}}^{2}, %\tag{$ \widehat{\textbf{H}}_{\textbf{a}}\textbf{.2} $}
\end{align}			
\end{subequations}
for some $ 0 < C_{3} < \infty $, e.g., \cite{cartis2011adaptiveI,cartis2011adaptiveII,cartis2012complexity} and other follow-up works. In fact, \cite{cartis2012complexity} has also established optimal iteration complexity for trust-region algorithm under \eqref{eq:Hessian_accuracy_H_quadratic_SARC}. 
Both of \eqref{eq:Hessian_accuracy_H_quadratic_STR} and \eqref{eq:Hessian_accuracy_H_quadratic_SARC}, are stronger than the celebrated Dennis-Mor\'{e} \cite{dennis1974characterization} condition, i.e., 
\begin{align*}
%\label{eq:Hessian_accuracy_H_Dennis_More}
\lim_{t \rightarrow \infty} \frac{\norm{\left(\H(\x_{t}) - \nabla^2 F(\x_{t})\right) \s_{t}}}{\norm{\s_{t}}} = 0.
\end{align*}
Indeed, under certain assumptions, Dennis-Mor\'{e} condition is satisfied by a number of quasi-Newton methods, although the same cannot be said about \eqref{eq:Hessian_accuracy_H_quadratic_STR} and \eqref{eq:Hessian_accuracy_H_quadratic_SARC} \cite{cartis2011adaptiveI}.

\subsubsection{Merits of Condition \ref{condition:Hessian_approximation_H} \label{sec:merits}}
For our trust-region analysis, we require Condition \ref{condition:Hessian_approximation_H} with $ \epsilon \in \bigO(\max \left\{ \epsilon_{H}, \Delta_{t} \right\}) $; see \eqref{eq:STR_epsilon} in Theorem \ref{theorem:STR_main_det}. Hence, when $ \Delta_{t} $ is large, e.g., at the beginning of iterations,  all the conditions \eqref{eq:Hessian_accuracy_H}, \eqref{eq:Hessian_accuracy_H_quadratic_STR}, and \eqref{eq:Hessian_accuracy_H_quadratic_STR_new} are equivalent, up to some constants. However, the constants in \eqref{eq:Hessian_accuracy_H_quadratic_STR} and \eqref{eq:Hessian_accuracy_H_quadratic_STR_new} can be larger than what is implied by \eqref{eq:Hessian_accuracy_H}, amounting to cruder approximations in practice for when $ \Delta_{t} $ is large. As iterations progress, the trust-region radius will get smaller, and in fact it is expected that $ \Delta_{t} $ will eventually shrink to be $ \Delta_{t} \in \Theta \left(\min\{\epsilon_{g},\epsilon_{H}\}\right) $.  In prior works, e.g.,  \cite{blanchet2016convergence,gratton2017complexity}, the convergence analysis is derived using $ \epsilon_{H} = \epsilon_{g} $, whereas here we allow $ \epsilon_{H} = \sqrt{\epsilon_{g}}$. As a result, the requirements \eqref{eq:Hessian_accuracy_H_quadratic_STR} and \eqref{eq:Hessian_accuracy_H_quadratic_STR_new} can eventually amount to stricter conditions than  \eqref{eq:Hessian_accuracy_H}.
	
As for \eqref{eq:Hessian_accuracy_H_quadratic_SARC}, the main drawback lies in the difficulty of enforcing it. Despite the fact that for certain values of $ \|\s_{t}\| $ and $ \epsilon $, e.g., $ \epsilon \ll \|\s_{t}\| $, \eqref{eq:Hessian_accuracy_H_quadratic_SARC} can be less restrictive than \eqref{eq:Hessian_accuracy_H}, a priori enforcing \eqref{eq:Hessian_accuracy_H_quadratic_SARC} requires one to have already computed the search direction $\s_t$, which itself can be done only after $\H(\x_t)$ is constructed, hence creating a vicious circle. A posteriori guarantees can be given if one obtains a lower-bound estimate on the yet-to-be-computed step-size, i.e., to have  $ s_{0}>0 $  such that $ s_{0} \leq \|\s_{t}\| $. This allows one to consider a stronger condition as $ \norm{\left(\H(\x_{t}) - \nabla^2 F(\x_{t})\right)}\le C_{3} s_{0} $, which can be enforced using a variety of methods such as those described in Section \ref{sec:sub_sampling_and_finite_sum}. However, to obtain such a lower-bound estimate on the next step-size, one has to resort to a recursive procedure, which necessitates repeated constructions of the approximate Hessian and subsequent solutions of the corresponding subproblems. 
\red{Consequently, this procedure may result in a significant computational overhead and will lead to undesirable theoretical complexities.}
%Clearly, this procedure will result in a significant computational overhead and will lead to undesirable theoretical complexities.
	
In sharp contrast to \eqref{eq:Hessian_accuracy_H_quadratic_SARC}, the condition \eqref{eq:Hessian_accuracy_H} allows for theoretically principled use of many practical techniques to construct $ \H_{t} $. For example, under \eqref{eq:Hessian_accuracy_H}, the use of quasi-Newton methods to approximate the Hessian is theoretically justified. Further, by considering the stronger condition \eqref{eq:Hessian_accuracy_H2}, many \emph{randomized matrix approximation} techniques can be readily applied, e.g., \cite{woodruff2014sketching,mahoney2011randomized,tropp2015introduction,tropp_concentration}; see Section \ref{sec:sub_sampling_and_finite_sum}.
To the best of our knowledge, the only successful attempt at guaranteed a priori construction of $ \H_{t} $ using \eqref{eq:Hessian_accuracy_H_quadratic_SARC} is done in \cite{cartis2015global}. Specifically, by considering probabilistic models, which are ``sufficiently accurate'' in that they are partly based on \eqref{eq:Hessian_accuracy_H_quadratic_SARC},  \cite{cartis2015global} studies first-order complexity of a large class of methods, including ARC, and discusses ways to construct such probabilistic models as long as the gradient is large enough, i.e., before first-order approximate-optimality is achieved.  Here, by considering \eqref{eq:Hessian_accuracy_H}, we are able to provide an alternative analysis, which allows us to obtain second-order complexity results.

Requiring \eqref{eq:Hessian_accuracy_H2}, as a way of enforcing \eqref{eq:Hessian_accuracy_H}, offers a variety of other practical advantages, which are not readily available with other conditions. For example, consider distributed/parallel environments where the data is distributed across a network and the main bottleneck of computations is the communications across the nodes. In such settings, since \eqref{eq:Hessian_accuracy_H2} allows for the Hessian accuracy to be set a priori and to remain fixed across all iterations, the number of samples in each node can stay the same throughout iterations. This  prevents unnecessary communications to re-distribute the data at every iteration.

Furthermore, in case of failed iterations, i.e., when the computed steps are rejected, the previous $ \H_{t} $ may seamlessly be used in the next iteration, which avoids repeating many such, potentially expensive, computations throughout the iterations. For example, consider approximate solutions to the underlying sub-problems by means of dimensionality reduction, i.e., $  \H_{t}  $ is projected onto a lower dimensional sub-space as $  \U^{T}\H_{t} \U  $ for some $ \U \in \mathbb{R}^{d \times p} $ with $ p \ll d $, resulting in a smaller dimensional sub-problem. Now if the current iteration leads to a rejected step, the projection of the $  \H_{t}  $ from the previous iteration can be readily re-used in the next iteration. This naturally amounts to saving further Hessian computations.

\section{Algorithms and Convergence Analysis}
\label{sec:convergence_analysis}
%In this section, we consider the generic non-convex optimization \eqref{eq:obj} and study TR and CR methods with \emph{inexact} Hessian. 
We are now ready to present our main algorithms for solving the generic non-convex optimization \eqref{eq:obj} along with their corresponding iteration complexity results to obtain a $(\epsilon_g, \epsilon_H)$-optimal solution as in \eqref{eq:2ndopt}. More precisely, in Section \ref{sec:TR_analysis} and \ref{sec:ARC_analysis}, respectively, we present modifications of the TR and ARC methods which incorporate inexact Hessian information, according to Condition \ref{condition:Hessian_approximation_H}.

We remind that, though not specifically mentioned in the statement of the theorems or the algorithms, when the computed steps are rejected and an iteration needs to be repeated with different $ \Delta_{t} $ or $ \sigma_{t} $, the previous $ \H_{t} $ may seamlessly be used in the next iteration. This can be a desirable feature in many practical situations and is directly the result of enforcing \eqref{eq:Hessian_accuracy_H2}; see also the discussion in Section \ref{sec:merits}.

%As it can be seen, the bulk of the algorithms are almost identical to the case where the exact Hessian is used and the difference merely boils down to the use of $ \H(\x_{t}) $ as opposed to $ \nabla^{2} F(\x_{t}) $. As indicated in Section \ref{sec:notation}, for notational simplicity, in what follows, we use $ \H_{t} $ to denote $ \H(\x_{t}) $.

For our analysis throughout the paper, we make the following standard assumption regarding the regularity of the exact Hessian of the objective function $ F $.
\begin{assumption}[Hessian Regularity]
	\label{assumption:a1}
	$F(\x)$ is twice differentiable and has bounded and Lipschitz continuous Hessian on the piece-wise linear path generated by the iterates, i.e. for some $ 0 < K, L < \infty $ and all iterations 
	\begin{subequations}
		\label{eq:Hessian_regularity_F}
		\begin{align}
			&\norm{\nabla^2 F(\x) -\nabla^2 F(\x_{t})} \le L \norm{\x - \x_{t}}, \; \forall \x \in [\x_{t}, \x_{t} + \s_{t}], \label{eq:Hessian_Lipschitz_F}
			\\
			& \norm{\nabla^2 F(\x_{t})} \le K, 	\label{eq:Hessian_boundedness_F}
		\end{align}
	\end{subequations}
	where $ \x_{t} $ and $ \s_{t} $ are, respectively, the iterate and the update step at iteration $ t $.
	%	Note that we can relax the Lipschitz continuity assumption of the Hessian to hold only on the piece-wise linear path generated by the iterates, but for the sake of exposition, we stick with the global case.
\end{assumption}
Although, we do not know of a particular way to, a priori, verify \eqref{eq:Hessian_Lipschitz_F}, it is clear that  Assumption \eqref{eq:Hessian_Lipschitz_F} is weaker than Lipschitz continuity of the Hessian for all $ \x $, i.e.,
\begin{align}
	\norm{\nabla^2 F(\x) -\nabla^2 F(\y)} \le L \norm{\x - \y}, \; \forall \x, \y  \in \mathbb{R}^{d}. \label{eq:Hessian_Lipschitz_F_all}
\end{align}
Despite the fact that theoretically \eqref{eq:Hessian_Lipschitz_F} is weaker than \eqref{eq:Hessian_Lipschitz_F_all}, to the best of our knowledge as of yet, \eqref{eq:Hessian_Lipschitz_F_all} is the only practical sufficient condition for verifying \eqref{eq:Hessian_Lipschitz_F}. 

\subsection{Trust Region with Inexact Hessian}
\label{sec:TR_analysis}
Algorithm \ref{alg:STR_fg} depicts a trust-region algorithm where at each iteration $ t $, instead of the true Hessian $ \nabla^{2} F(\x_{t}) $, only an inexact approximation, $\H_{t} $, is used. 
For Algorithm \ref{alg:STR_fg}, the accuracy tolerance in \eqref{eq:Hessian_accuracy_H} is adaptively chosen as $  \epsilon_{t} \leq \max \left\{ \epsilon_{0}, \Delta_{t} \right\} $, where $ \Delta_{t} $ is the trust region in the t-th iteration and $ \epsilon_{0} \in \mathcal{O}(\epsilon_{H}) $ is some fixed threshold. This allows for a very crude approximation at the beginning of iterations, when $ \Delta_{t} $ is large. As iterations progress towards optimality and $ \Delta_{t} $ gets small, the threshold $ \epsilon_{0} $ can prevent $ \epsilon $ from getting unnecessarily too small.

\begin{algorithm}[htb]
	\caption{Trust Region with Inexact Hessian}
	\label{alg:STR_fg}
	\begin{algorithmic}[1]
		\STATE {\bf Input:} Starting point $\x_0$, initial radius $0 < \Delta_0 < \infty$, hyper-parameters $\epsilon_{0}, \epsilon_{g}, \epsilon_{H}, \eta\in(0,1), \gamma > 1$ 
		\FOR{$ t = 0,1,\ldots $}
		\STATE Set the approximate Hessian, $\H_t$, as in \eqref{eq:Hessian_approximation} with $  \epsilon_{t} \leq \max \left\{ \epsilon_{0}, \Delta_{t} \right\} $ \label{step:STR_step}
		\IF{$\norm{\nabla F(\x_{t})} \le \epsilon_g,  \lmin(\H_t) \ge -\epsilon_H\;$}  
		\STATE  Return $\x_t$.
		\ENDIF
		\STATE Solve the sub-problem approximately
		\begin{align}
		\label{eq:STR_subp}
		\s_t \approx \arg\min_{\|\s\|\le \Delta_t} m_t(s) \triangleq \lin{\nabla F(\x_{t}), \s} + \frac{1}{2}\lin{\s, \H_t \s} %\tag{$\ast$}
		\end{align}
		\STATE Set $\rho_t \triangleq \dfrac{F(\x_t) - F(\x_t + \s_t)}{-m_t(\s_t)}$
		\IF {$\rho_t \ge \eta$}
		\STATE $\x_{t+1} = \x_t + \s_t$
		\STATE $\Delta_{t+1} = \gamma \Delta_t$
		\ELSE
		\STATE $\x_{t+1} = \x_t$
		\STATE $\Delta_{t+1} = \Delta_t/\gamma$
		\ENDIF
		\ENDFOR
		\STATE {\bf Output:} $\x_{t}$
	\end{algorithmic}
\end{algorithm}

In Algorithm \ref{alg:STR_fg}, we require that the sub-problem \eqref{eq:STR_subp} is solved only approximately. Indeed, in large-scale problems, where the exact solution of the sub-problem is the main bottleneck of the computations, this is a very crucial relaxation. Such approximate solution of the sub-problem \eqref{eq:STR_subp} has been adopted in many previous work. Here, we follow the inexactness conditions discussed in \cite{conn2000trust}, which are widely known as Cauchy and Eigenpoint conditions. Recall that the Cauchy and Eigen directions correspond, respectively, to one dimensional minimization of the sub-problem \eqref{eq:STR_subp} along the directions given by the gradient and negative curvature. 
\begin{condition}[Sufficient Descent Cauchy and Eigen Directions \cite{conn2000trust}]
	\label{condition:STC_sufficient_descent} 
	Assume that we solve the sub-problem \eqref{eq:STR_subp} approximately to find $\s_t$ such that
	\begin{subequations}%{$ \textbf{S}_{_{\textbf{TR}}} $}
		\label{eq:STR_cond}
		\begin{align}
		&-m_t(\s_t) \ge - m_t(\s_t^C) \ge \frac{1}{2}\|\nabla F(\x_{t})\|\min\left\{\frac{\|\nabla F(\x_{t})\|}{1+\|\H_t\|}, \Delta_t\right\} \label{eq:STR_Cauchy},  \\
		&-m_t(\s_t) \ge - m_t(\s_t^E)\ge \frac{1}{2}\nu|\lambda_{\min}(\H_t)|\Delta_t^2, \quad \text{if}~ \lmin(\H_t) < 0. \label{eq:STR_Eigen} 
		\end{align}
	\end{subequations}
	Here, $m_t(\cdot)$ is defined in \eqref{eq:STR_subp}, $\s_t^C$ (Cauchy point) is along negative gradient direction and $\s_t^E$ is along approximate negative curvature direction such that $\lin{\s_t^E, \H_t \s_t^E} \le \nu\lmin(\H_t)\|\s_t^E\|^2 < 0$, for some $ \nu \in (0,1] $ (see Appendix B for a way to efficiently compute $ \s_{t}^{E} $).
\end{condition}
One way to ensure that an approximate solution to the sub-problem \eqref{eq:STR_subp} satisfies \eqref{eq:STR_cond}, is by replacing \eqref{eq:STR_subp} with the following reduced-dimension problem, in which the search space is a two-dimensional sub-space containing vectors $\s_t^C$, and $\s_t^E$, i.e.,
\begin{align*}
\s_t = \arg\min_{\stackrel{\|\s\|\le \Delta_t}{\s \in \text{Span}\{\s_t^C, \s_t^E\}}} \lin{\nabla F(\x_{t}), \s} + \frac{1}{2}\lin{\s, \H_t \s}.
\end{align*}
Of course, any larger dimensional sub-space $ \mathcal{P} $ for which we have $ \text{Span}\{\s_t^C, \s_t^E\} \subseteq \mathcal{P} $ would also guarantee \eqref{eq:STR_cond}. In fact, a larger dimensional sub-space implies a more accurate solution to our original sub-problem \eqref{eq:STR_subp}.

% Using the inexactness Condition \ref{condition:STC_sufficient_descent}, Theorem \ref{theorem:STR_main_det} gives the iteration complexity of Algorithm \ref{alg:STR_fg}, which as discussed in Section \ref{sec:TR}, has been shown to be optimal. The proof of Theorem \ref{theorem:STR_main_det} can be found in Appendix \ref{sec:TR_proofs}.

We now set out to provide iteration complexity for Algorithm \ref{alg:STR_fg}. Our analysis follows similar line of reasoning as that in \cite{cartis2011adaptiveI,cartis2011adaptiveII,cartis2012complexity}. First, we show the discrepancy between the quadratic model and objective function in Lemma \ref{lemma:fundecrease}.
% -------------------- % function approximation % -------------------- %
\begin{lemma}
\label{lemma:fundecrease}
Given Assumption \ref{assumption:a1} and Condition \eqref{eq:Hessian_accuracy_H} with any $ \epsilon_{t}  > 0 $, we have
\begin{align}\label{eq:STR_fundec}
|F(\x_t + \s_t) - F(\x_t) - m_t(\s_t) | \le \frac{L}{2}  \Delta_t^3 + \frac{ \epsilon_{t}}{2}\Delta_t^2.
\end{align}
\end{lemma}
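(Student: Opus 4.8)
The plan is to expand $F(\x_t + \s_t)$ around $\x_t$ via the second-order Taylor theorem with integral remainder and then compare against the model $m_t$. Concretely, I would first write
\[
F(\x_t + \s_t) = F(\x_t) + \lin{\nabla F(\x_t), \s_t} + \int_0^1 (1-\tau)\, \lin{\s_t, \nabla^2 F(\x_t + \tau \s_t)\,\s_t}\, d\tau,
\]
which is valid under the twice-differentiability assumed in Assumption \ref{assumption:a1}. Subtracting $m_t(\s_t) = \lin{\nabla F(\x_t), \s_t} + \tfrac12 \lin{\s_t, \H_t \s_t}$ and using the elementary identity $\tfrac12 = \int_0^1 (1-\tau)\, d\tau$ to rewrite the quadratic term of the model as an integral, the gradient terms cancel and I arrive at
\[
F(\x_t + \s_t) - F(\x_t) - m_t(\s_t) = \int_0^1 (1-\tau)\, \lin{\s_t, \bigl(\nabla^2 F(\x_t + \tau\s_t) - \H_t\bigr)\s_t}\, d\tau.
\]

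Next I would insert $\pm \nabla^2 F(\x_t)$ inside the bracket and split the integrand into two pieces. For the first piece I use the Lipschitz bound \eqref{eq:Hessian_Lipschitz_F} together with Cauchy--Schwarz, giving $|\lin{\s_t, (\nabla^2 F(\x_t + \tau \s_t) - \nabla^2 F(\x_t))\s_t}| \le L \tau \norm{\s_t}^3$. The crucial second piece is where Condition \eqref{eq:Hessian_accuracy_H} enters: rather than bounding a spectral norm, I keep the matrix--vector product intact and apply Cauchy--Schwarz as $|\lin{\s_t, (\nabla^2 F(\x_t) - \H_t)\s_t}| \le \norm{\s_t}\,\norm{(\H_t - \nabla^2 F(\x_t))\s_t} \le \epsilon_t \norm{\s_t}^2$. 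This is precisely the step that lets the weaker, action-on-$\s_t$ form of the Hessian condition suffice in place of a full operator-norm bound, and I expect it to be the main conceptual point of the argument; everything else is routine.

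Finally I would integrate the two bounds against the weight $(1-\tau)$, using $\int_0^1 (1-\tau)\tau\, d\tau = \tfrac16$ and $\int_0^1 (1-\tau)\, d\tau = \tfrac12$, to obtain the estimate $\tfrac{L}{6}\norm{\s_t}^3 + \tfrac{\epsilon_t}{2}\norm{\s_t}^2$. Since the trust-region constraint forces $\norm{\s_t} \le \Delta_t$, I bound $\norm{\s_t}^3 \le \Delta_t^3$ and $\norm{\s_t}^2 \le \Delta_t^2$, which yields $\tfrac{L}{6}\Delta_t^3 + \tfrac{\epsilon_t}{2}\Delta_t^2$. This is in fact slightly stronger than the claimed $\tfrac{L}{2}\Delta_t^3 + \tfrac{\epsilon_t}{2}\Delta_t^2$, so the stated inequality follows immediately.
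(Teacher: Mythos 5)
Your proof is correct and takes essentially the same approach as the paper: the paper applies the mean-value (Lagrange) form of the Taylor remainder at an intermediate point $\xi_t$ and then performs exactly your $\pm\nabla^2 F(\x_t)$ splitting, bounding one term via \eqref{eq:Hessian_Lipschitz_F} and the other via Cauchy--Schwarz with \eqref{eq:Hessian_accuracy_H}. The only difference is your use of the integral form of the remainder, which yields the slightly sharper constant $L/6$ in place of the paper's $L/2$; both are dominated by the stated bound, so the lemma follows either way.
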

\begin{proof}
Applying Mean Value Theorem on $F$ at $\x_t$ gives $F(\x_t + \s_t) = F(\x_t) + \nabla F(\x_t)^T \s_t + \frac{1}{2}\s_t^T \nabla^2F(\xi_t)\s_t$, for some $\xi_t$ in the segment of $[\x_t, \x_t + \s_t]$. We have 
\begin{align*}
&\Abs{F(\x_t + \s_t) - F(\x_t) - m_t(\s_t)} = \frac{1}{2}\Abs{\s_t^T(\nabla^2 F(\xi_t) - \H_t)\s_t}
\\ &= \frac{1}{2}\Abs{\s_t^T(\nabla^2 F(\xi_t) - \nabla^2 F(\x_t) + \nabla^2 F(\x_t) - \H_t)\s_t}
\\ &\le \frac{1}{2}\Abs{\s_t^T(\nabla^2 F(\xi_t) - \nabla^2 F(\x_t))\s_t} + \frac{1}{2}\Abs{\s_t^T(\nabla^2 F(\x_t) - \H_t)\s_t}
\\ &\le \frac{L}{2} \|\s_t\|^3 +  \frac{\epsilon_{t}}{2}\|\s_t\|^2 \le \frac{L}{2}\Delta_t^3 + \frac{\epsilon_{t}}{2}\Delta_t^2. \qed
\end{align*}
\end{proof}

Combining with Conditions \ref{condition:Hessian_approximation_H} and \ref{condition:STC_sufficient_descent},  we get the following two lemmas that characterize sufficient conditions for successful iterations. 
% -------------------- % eigen direction % -------------------- %
\begin{lemma}
\label{lemma:model_eig}
Consider any $ \epsilon_{H} > 0 $, let $ \epsilon_{0} \triangleq \alpha (1-\eta)\nu \epsilon_{H} $ for some $ \alpha \in (0,1)$, and suppose Condition \ref{condition:Hessian_approximation_H} is satisfied with $\epsilon_{t} \leq \max\{\epsilon_{0}, \Delta_{t}\} $, where $ \Delta_{t} $ is the trust region at the t-th iteration. Given Assumption \ref{assumption:a1} and Condition \ref{condition:STC_sufficient_descent}, if $\lmin(\H_t) < -\epsilon_{H}$ and $\Delta_t \le (1-\alpha)(1-\eta)\nu \Abs{\lmin(\H_t)} / (L+1)$, then the t-th iteration is {\it successful}, i.e. $\Delta_{t+1} = \gamma \Delta_t$.
\end{lemma}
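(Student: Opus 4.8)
The goal is to show that under the stated hypotheses the ratio $\rho_t$ meets the acceptance threshold $\eta$, which by the update rule of Algorithm \ref{alg:STR_fg} forces $\Delta_{t+1} = \gamma\Delta_t$. The plan is to control the quantity $1-\rho_t$, which measures the relative error between the predicted (model) decrease and the actual decrease. A direct rearrangement of the definition of $\rho_t$ gives
\[
1-\rho_t = \frac{F(\x_t+\s_t) - F(\x_t) - m_t(\s_t)}{-m_t(\s_t)},
\]
so it suffices to establish that the right-hand side is at most $1-\eta$.

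For the numerator I would invoke Lemma \ref{lemma:fundecrease}, which bounds it above by $\tfrac{L}{2}\Delta_t^3 + \tfrac{\epsilon_t}{2}\Delta_t^2$. For the denominator, the hypothesis $\lmin(\H_t) < -\epsilon_H < 0$ guarantees $\H_t$ has a genuine negative-curvature direction, so the Eigen part \eqref{eq:STR_Eigen} of Condition \ref{condition:STC_sufficient_descent} applies and yields $-m_t(\s_t) \ge \tfrac12 \nu \Abs{\lmin(\H_t)} \Delta_t^2$. Dividing, the common factor $\Delta_t^2$ cancels and I obtain
\[
1-\rho_t \le \frac{L\Delta_t + \epsilon_t}{\nu\,\Abs{\lmin(\H_t)}}.
\]
Thus the claim reduces to the scalar inequality $L\Delta_t + \epsilon_t \le (1-\eta)\,\nu\,\Abs{\lmin(\H_t)}$.

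To close this, I would use the adaptive accuracy $\epsilon_t \le \max\{\epsilon_0,\Delta_t\}$ and split into two cases according to which term attains the maximum; this is the one step needing care. Write $\lambda \triangleq \Abs{\lmin(\H_t)} > \epsilon_H$. The radius hypothesis $\Delta_t \le (1-\alpha)(1-\eta)\nu\lambda/(L+1)$ gives $(L+1)\Delta_t \le (1-\alpha)(1-\eta)\nu\lambda$, so $L\Delta_t$ and, separately, $\Delta_t$ are each at most $(1-\alpha)(1-\eta)\nu\lambda$. If $\Delta_t \ge \epsilon_0$ then $\epsilon_t \le \Delta_t$ and $L\Delta_t + \epsilon_t \le (L+1)\Delta_t \le (1-\alpha)(1-\eta)\nu\lambda \le (1-\eta)\nu\lambda$. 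If instead $\epsilon_0 > \Delta_t$, then $\epsilon_t \le \epsilon_0 = \alpha(1-\eta)\nu\epsilon_H \le \alpha(1-\eta)\nu\lambda$, and combining with $L\Delta_t \le (1-\alpha)(1-\eta)\nu\lambda$ gives $L\Delta_t + \epsilon_t \le (1-\eta)\nu\lambda$. In either case the scalar inequality holds, whence $1-\rho_t \le 1-\eta$ and the iteration is successful.

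The main obstacle, such as it is, is bookkeeping rather than anything deep: the threshold $\epsilon_0$ is expressed in terms of $\epsilon_H$ while the denominator carries $\Abs{\lmin(\H_t)}$, so the strict bound $\Abs{\lmin(\H_t)} > \epsilon_H$ must be used to pass between them, and one must check that the constants $\alpha$ and $1-\alpha$ partition the ``budget'' $(1-\eta)\nu\lambda$ precisely so that both cases close. I would also note that Lemma \ref{lemma:fundecrease} bounds the \emph{absolute value} of the discrepancy, which is more than enough here, since only an upper bound on the signed numerator is needed.
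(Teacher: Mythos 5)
Your proof is correct and takes essentially the same route as the paper's: bound the numerator by Lemma \ref{lemma:fundecrease}, bound the denominator by the Eigen condition \eqref{eq:STR_Eigen}, and split on whether $\Delta_t$ or $\epsilon_0$ attains the maximum in $\epsilon_t \le \max\{\epsilon_0,\Delta_t\}$, using $\epsilon_H < \Abs{\lmin(\H_t)}$ and the $\alpha$ versus $(1-\alpha)$ partition of the budget $(1-\eta)\nu\Abs{\lmin(\H_t)}$. The only difference is organizational — you cancel $\Delta_t^2$ and reduce to a scalar inequality before the case split, while the paper performs the case split first — which changes nothing of substance.
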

\begin{proof}
Suppose $ \Delta_{t} \leq \epsilon_{0} $. From \eqref{eq:STR_Eigen} and \eqref{eq:STR_fundec}, we have
\begin{align*}
1 - \rho_t &= \frac{F(\x_t + \s_t) - F(\x_t)-m_t(\s_t)}{-m_t(\s_t)}  \le \frac{L \Delta_t^3 +  \epsilon_{t}\Delta_t^2}{\nu\Abs{\lmin(\H_t)}\Delta_t^2} \leq \frac{L \Delta_t^3 +  \alpha (1-\eta)\nu \epsilon_{H}\Delta_t^2}{\nu\Abs{\lmin(\H_t)}\Delta_t^2} \\
& \le \frac{L \Delta_t^3 + \alpha (1-\eta)\nu  \Abs{\lmin(\H_t)}\Delta_t^2}{\nu \Abs{\lmin(\H_t)} \Delta_t^2} \le \frac{L \Delta_t + \alpha (1-\eta)\nu \Abs{\lmin(\H_t)}}{\nu \Abs{\lmin(\H_t)}}.
\end{align*}
By the assumption on $\Delta_t$, we get $\rho_t \ge \eta$ and the iteration is successful. 
Now consider $ \Delta_{t} \geq \epsilon_{0} $. Similar to the above, we have 
\begin{align*}
	1 - \rho_t &= \frac{F(\x_t + \s_t) - F(\x_t)-m_t(\s_t)}{-m_t(\s_t)}  \le \frac{L \Delta_t^3 +  \epsilon_{t}\Delta_t^2}{\nu\Abs{\lmin(\H_t)}\Delta_t^2} \\
	& \le \frac{(L + 1) \Delta_t^3}{\nu \Abs{\lmin(\H_t)} \Delta_t^2} \le \frac{(L + 1) \Delta_t}{\nu \Abs{\lmin(\H_t)}},
\end{align*}
which again by assumption on $ \Delta_{t} $ and noting $ \alpha < 1 $, we get $\rho_t \ge \eta$ and the iteration is successful. 
\qed
\end{proof}

% -------------------- % gradient direction % -------------------- %
\begin{lemma}%{lemma}{lemmodel_g2}
\label{lemma:model_g2}
Suppose Condition \ref{condition:Hessian_approximation_H} is satisfied with any $\epsilon_{t} > 0$.
Given Assumption \ref{assumption:a1} and Condition \ref{condition:STC_sufficient_descent}, if  $\|\nabla F(\x_{t})\| > \epsilon_{g}$ and
\begin{align*}
\Delta_t \le \min\left\{\frac{\|\nabla F(\x_{t})\|}{(1+K_{H})}, \frac{\sqrt{ \epsilon_{t}^{2} + 4 L (1-\eta) \|\nabla F(\x_{t})\|}- \epsilon_{t}}{2 L}\right\}, 
%\Delta_t \le \min\left\{\frac{\epsilon_{g}}{(1+K_{H})}, \refone{\frac{2(1-\eta)\epsilon_{g}}{3}, \frac{1}{2 L}, \frac{(1-\eta) \epsilon_{g}}{\sqrt{\epsilon + 4L(1-\eta) \epsilon_g}}}\right\}, 
\end{align*}
then, the t-th iteration is successful, i.e. $\Delta_{t+1} = \gamma \Delta_t$.
\end{lemma}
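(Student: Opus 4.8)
The plan is to mirror the argument of Lemma \ref{lemma:model_eig}, but to lower-bound the model decrease $-m_t(\s_t)$ using the Cauchy condition \eqref{eq:STR_Cauchy} in place of the Eigen condition. First I would combine the definition of $\rho_t$ with Lemma \ref{lemma:fundecrease} to write
\[
1 - \rho_t = \frac{F(\x_t + \s_t) - F(\x_t) - m_t(\s_t)}{-m_t(\s_t)} \le \frac{\frac{L}{2}\Delta_t^3 + \frac{\epsilon_t}{2}\Delta_t^2}{-m_t(\s_t)},
\]
so that everything reduces to producing a suitable lower bound on $-m_t(\s_t)$ and then bounding the resulting fraction by $1-\eta$.

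The key observation is that the first term in the $\min$ of the hypothesis is tailored precisely to collapse the Cauchy bound. Since $\norm{\H_t} \le K_H$ by \eqref{eq:Hessian_boundedness_H}, the assumption $\Delta_t \le \norm{\nabla F(\x_{t})}/(1+K_H)$ implies $\Delta_t \le \norm{\nabla F(\x_{t})}/(1+\norm{\H_t})$, whence the $\min$ in \eqref{eq:STR_Cauchy} equals $\Delta_t$ and therefore $-m_t(\s_t) \ge \tfrac{1}{2}\norm{\nabla F(\x_{t})}\,\Delta_t$. Substituting this into the displayed bound yields
\[
1 - \rho_t \le \frac{L\Delta_t^2 + \epsilon_t \Delta_t}{\norm{\nabla F(\x_{t})}}.
\]

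It then suffices to force this quantity to be at most $1-\eta$, i.e. to guarantee $L\Delta_t^2 + \epsilon_t \Delta_t - (1-\eta)\norm{\nabla F(\x_{t})} \le 0$. Viewed as a quadratic in $\Delta_t$ with positive leading coefficient $L$, this inequality holds exactly when $\Delta_t$ does not exceed the larger root
\[
\frac{\sqrt{\epsilon_t^2 + 4L(1-\eta)\norm{\nabla F(\x_{t})}}-\epsilon_t}{2L},
\]
which is precisely the second term in the hypothesized upper bound on $\Delta_t$. Under the second constraint we thus obtain $1-\rho_t \le 1-\eta$, i.e. $\rho_t \ge \eta$, so the iteration is successful and $\Delta_{t+1}=\gamma\Delta_t$. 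There is no genuine obstacle here; the only thing requiring care is the bookkeeping around the two constraints (hence the $\min$ in the statement): the first is used to collapse the Cauchy $\min$ to $\Delta_t$, and the second to solve the quadratic inequality, and both must be imposed simultaneously. I would therefore present the proof by first establishing the simplified Cauchy lower bound and then treating the quadratic inequality.
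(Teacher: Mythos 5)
Your proof is correct and follows essentially the same route as the paper's: use the first constraint on $\Delta_t$ together with $\norm{\H_t}\le K_H$ to collapse the Cauchy bound \eqref{eq:STR_Cauchy} to $-m_t(\s_t)\ge \tfrac12\norm{\nabla F(\x_t)}\Delta_t$, then combine with Lemma \ref{lemma:fundecrease} and observe that the second constraint is exactly the positive root of the resulting quadratic inequality in $\Delta_t$. The only difference is that you spell out the quadratic-root step that the paper compresses into ``the last inequality follows by assumption on $\Delta_t$.''
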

\begin{proof}
By assumption on $\Delta_{t}$, \eqref{eq:STR_Cauchy}, and since $\|\nabla F(\x_{t})\| > \epsilon_{g}$, we have
\begin{align*}
-m_t(\s_t) &\ge \frac{1}{2}\|\nabla F(\x_{t})\|\min\left\{\frac{\|\nabla F(\x_{t})\|}{1+\|\H_t\|}, \Delta_t\right\}
%\ge \frac{1}{2}\epsilon_{g}\min\left\{\frac{\epsilon_{g}}{1+K_{H}}, \Delta_t\right\} 
\ge \frac{1}{2}\|\nabla F(\x_{t})\|\Delta_t.
\end{align*}
Therefore,
\begin{align*}
1 - \rho_t  & = \frac{F(\x_t + \s_t) - F(\x_t) - m_t(\s_t)}{- m_t(\s_t)} \leq \frac{L\Delta_t^3 +  \epsilon_{t}\Delta_t^2}{\|\nabla F(\x_{t})\|\Delta_t} \leq \frac{L\Delta_t^2 +  \epsilon_{t}\Delta_t}{\|\nabla F(\x_{t})\|} \leq 1-\eta,
\end{align*}
where the last inequality follows by assumption on $ \Delta_{t} $.
%\refone{If $4L(1 - \eta)\epsilon_g \ge 3$, then $\Delta_t \le {1}/{(2L)} \le {(-1 + \sqrt{1 + 4L(1-\eta)\epsilon_g)}}/{(2L)}$. Otherwise, $\Delta_t \le {2(1 - \eta)\epsilon_g}/{3} \le {(-1 + \sqrt{1 + 4L(1-\eta)\epsilon_g})}/{(2L)}$. In either case, since the function the function $ h(t) \triangleq {-t + \sqrt{t^2 + a}} $, for any $ a > 0 $, is decreasing in $ t $, we have
%\begin{align*}
%	\Delta_t \leq \frac{-1 + \sqrt{1 + 4L(1-\eta) \epsilon_g}}{2L} \le \frac{-\epsilon + \sqrt{\epsilon + 4L(1-\eta) \epsilon_g}}{2L}.
%\end{align*}
%Now by assumption on $ \Delta_{t} $, we have $ \Delta_{t} \sqrt{\epsilon + 4L(1-\eta) \epsilon_g} \leq (1-\eta) \epsilon_{g} $, which implies $L\Delta_t^2 + \epsilon \Delta_t \le (1 - \eta) \epsilon_{g}$.} 
So $\rho_t \ge \eta$, which means the iteration is successful. \qed
\end{proof}

Lemma \ref{lemma:lowerbound_delta} gives a lower bound for the trust region radius before the algorithm terminates, i.e., this ensures that the trust region never shrinks to become too small.
% -------------------- % lower bound delta % -------------------- %
\begin{lemma}
		\label{lemma:lowerbound_delta}
		Consider any $ \epsilon_{g}, \epsilon_{H} > 0 $ such that $ \epsilon_{H} \leq \sqrt{\epsilon_{g}} $ and let $ \epsilon_{0} \triangleq \alpha (1-\eta)\nu \epsilon_{H} $ for some $ \alpha \in (0,1)$. Further, suppose Condition \ref{condition:Hessian_approximation_H} is satisfied with $\epsilon_{t} \leq \max\{\epsilon_{0}, \Delta_{t}\} $, where $ \Delta_{t} $ is the trust region at the t-th iteration. For Algorithm \ref{alg:STR_fg}, under Assumption \ref{assumption:a1} and Condition \ref{condition:STC_sufficient_descent}, we have $\Delta_t \ge \kappa_{\Delta} \min\{\epsilon_{g},\epsilon_{H}\}, \forall t \geq 0$, where 
		\begin{align*}
			\kappa_{\Delta} &\triangleq \min\left\{\kappa_{1}, \kappa_{2}, \kappa_{3}, \kappa_{4}\right\} / \gamma, \quad \kappa_{1} \triangleq {(1-\alpha)(1-\eta)\nu}/{(L+1)}, \quad \kappa_{2} \triangleq \alpha (1-\eta)\nu, \\
			\kappa_{3} &\triangleq {1}/{(1+K_{H})}, \quad \kappa_{4} \triangleq {\sqrt{(\alpha (1-\eta)\nu)^{2} + 4 L (1-\eta) }-\alpha (1-\eta)\nu}/{(2 L)}.
		\end{align*}
	\end{lemma}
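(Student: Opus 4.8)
The plan is to run the standard ``the radius cannot collapse'' argument for trust-region methods: the only mechanism that shrinks $\Delta_t$ is an unsuccessful iteration, so it suffices to prove that any sufficiently small radius (before termination) forces a \emph{successful} iteration, after which the radius grows by $\gamma$. Writing $\bar{\kappa} \triangleq \min\{\kappa_1,\kappa_2,\kappa_3,\kappa_4\}$, the core one-step claim I would establish first is: if iteration $t$ occurs before the algorithm terminates and $\Delta_t \le \bar{\kappa}\min\{\epsilon_g,\epsilon_H\}$, then iteration $t$ is successful. Since the stopping test in Algorithm \ref{alg:STR_fg} has failed at $t$, at least one of $\norm{\nabla F(\x_t)} > \epsilon_g$ or $\lmin(\H_t) < -\epsilon_H$ must hold, and I would treat these two cases separately using Lemmas \ref{lemma:model_eig} and \ref{lemma:model_g2} respectively.

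In the negative-curvature case, $\Abs{\lmin(\H_t)} > \epsilon_H$, so from $\Delta_t \le \kappa_1\min\{\epsilon_g,\epsilon_H\} \le \kappa_1 \epsilon_H = (1-\alpha)(1-\eta)\nu\,\epsilon_H/(L+1) < (1-\alpha)(1-\eta)\nu\,\Abs{\lmin(\H_t)}/(L+1)$ the hypothesis of Lemma \ref{lemma:model_eig} is met, and the step is successful. The large-gradient case is where the real work lies. Lemma \ref{lemma:model_g2} requires two upper bounds on $\Delta_t$. The first, $\Delta_t \le \norm{\nabla F(\x_t)}/(1+K_H)$, is immediate from $\Delta_t \le \kappa_3\min\{\epsilon_g,\epsilon_H\} \le \epsilon_g/(1+K_H) < \norm{\nabla F(\x_t)}/(1+K_H)$.

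The delicate ingredient — and what I expect to be the main obstacle — is the second bound, because the tolerance $\epsilon_t$ appears inside it and is only controlled through $\epsilon_t \le \max\{\epsilon_0,\Delta_t\}$, which is self-referential in $\Delta_t$. The key observation that breaks the circle is the role of $\kappa_2$: since $\kappa_2 \epsilon_H = \alpha(1-\eta)\nu\,\epsilon_H = \epsilon_0$ and $\min\{\epsilon_g,\epsilon_H\}\le \epsilon_H$, the hypothesis $\Delta_t \le \kappa_2\min\{\epsilon_g,\epsilon_H\} \le \epsilon_0$ forces $\max\{\epsilon_0,\Delta_t\}=\epsilon_0$, hence $\epsilon_t \le \epsilon_0$. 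I would then exploit that the relevant bound, viewed as the function $\epsilon \mapsto \bigl(\sqrt{\epsilon^2 + 4L(1-\eta)\norm{\nabla F(\x_t)}} - \epsilon\bigr)/(2L)$, is decreasing in $\epsilon$ and increasing in $\norm{\nabla F(\x_t)}$. Replacing $\epsilon_t$ by its upper bound $\epsilon_0 = \alpha(1-\eta)\nu\,\epsilon_H$ and $\norm{\nabla F(\x_t)}$ by its lower bound $\epsilon_g$, and then invoking the standing assumption $\epsilon_H \le \sqrt{\epsilon_g}$ (equivalently $\epsilon_0 \le \alpha(1-\eta)\nu\sqrt{\epsilon_g}$), lets me factor $\sqrt{\epsilon_g}$ out of the square root; the leftover constant is exactly $\kappa_4$, yielding that the bound is at least $\kappa_4\sqrt{\epsilon_g} \ge \kappa_4\,\epsilon_g \ge \kappa_4\min\{\epsilon_g,\epsilon_H\} \ge \Delta_t$ (using $\epsilon_g<1$). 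Thus both conditions of Lemma \ref{lemma:model_g2} hold and the iteration is successful. This monotonicity-plus-square-root manipulation is precisely where the hypothesis $\epsilon_H\le\sqrt{\epsilon_g}$ is consumed.

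With the one-step claim in hand, I would finish by contradiction. Suppose $\Delta_t < \kappa_\Delta\min\{\epsilon_g,\epsilon_H\} = (\bar{\kappa}/\gamma)\min\{\epsilon_g,\epsilon_H\}$ for some $t$, and take the smallest such $t$; assuming the (mild) initialization $\Delta_0 \ge \kappa_\Delta\min\{\epsilon_g,\epsilon_H\}$, we have $t\ge 1$. A successful iteration $t-1$ would give $\Delta_t = \gamma\Delta_{t-1} \ge \bar{\kappa}\min\{\epsilon_g,\epsilon_H\}$, contradicting the choice of $t$; hence iteration $t-1$ is unsuccessful and $\Delta_{t-1} = \gamma\Delta_t < \bar{\kappa}\min\{\epsilon_g,\epsilon_H\}$. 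But then the one-step claim applied at $t-1$ asserts that iteration $t-1$ is successful — a contradiction. Therefore $\Delta_t \ge \kappa_\Delta\min\{\epsilon_g,\epsilon_H\}$ for all $t\ge 0$, which is the desired bound.
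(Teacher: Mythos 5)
Your proposal is correct and follows essentially the same route as the paper's proof: a first-failure contradiction argument in which $\kappa_1$ handles the negative-curvature case via Lemma \ref{lemma:model_eig}, $\kappa_2$ forces $\epsilon_t \le \epsilon_0$, and the monotonicity of $\epsilon \mapsto \sqrt{\epsilon^2+b}-\epsilon$ delivers the second hypothesis of Lemma \ref{lemma:model_g2}. The only cosmetic difference is that you factor $\sqrt{\epsilon_g}$ out of the square root and then invoke $\epsilon_g<1$ (consistent with Algorithm \ref{alg:STR_fg}'s input specification but not stated in the lemma), whereas the paper uses $\|\nabla F(\x_t)\|\ge\epsilon_g\ge\epsilon_H^2$ and factors out $\epsilon_H$; your detour is avoidable by concluding directly with $\kappa_4\sqrt{\epsilon_g}\ge\kappa_4\epsilon_H\ge\kappa_4\min\{\epsilon_g,\epsilon_H\}$.
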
	
	\begin{proof}
		We prove by contradiction. Assume that the t-th iteration is the first unsuccessful iteration such that
		$\Delta_{t+1} = \Delta_{t}/\gamma  \le \kappa_{\Delta} \min\{\epsilon_{g},\epsilon_{H}\}$, i.e., we have 
		\begin{align*}
			\Delta_{t} \leq \min\left\{\kappa_{1}, \kappa_{2}, \kappa_{3}, \kappa_{4}\right\} \cdot \min\{\epsilon_{g},\epsilon_{H}\}.
		\end{align*}
		Suppose $\lambda_{\min}(\H_{t}) < -\epsilon_{H}$. By Lemma \ref{lemma:model_eig}, since $ \Delta_t \le (1-\alpha)(1-\eta)\nu \Abs{\lambda_{\min}(\H_{t})}/{(L+1)} $, iteration $ t $ must have been accepted and we must have $ \Delta_{t+1} = \gamma \Delta_{t} > \Delta_{t}$, which is a contradiction. 
		Now suppose $ \|\nabla F(\x_{t})\| \geq \epsilon_{g}$. By assumption on $ \Delta_{t} $, we have that $ \Delta_{t} \leq \kappa_{2} \epsilon_H = \epsilon_{0} $, which implies that $ \epsilon_{t} \leq \epsilon_{0} $. Since the function $ h(a,b) \triangleq {-a + \sqrt{a^2 + b}} $, for any fixed $ b > 0 $, is decreasing in $ a $,  and for any fixed $ a $, is increasing in $ b \geq 0$, we have
		\begin{align*}
			h( \epsilon_{t},4 L (1-\eta) \|\nabla F(\x_{t})\|) &\geq h(\epsilon_{0},4 L (1-\eta) \|\nabla F(\x_{t})\|) \\
			&\geq h(\epsilon_{0},4 L (1-\eta) \epsilon_{g}) \geq h(\epsilon_{0},4 L (1-\eta) \epsilon_{H}^{2}),
		\end{align*}
		which implies
		\begin{align*}
			\frac{\sqrt{ \epsilon_{t}^{2} + 4 L (1-\eta) \|\nabla F(\x_{t})\|}- \epsilon_{t}}{2L} \geq \kappa_{4} \epsilon_H.
		\end{align*}
		As a result, since $ \Delta_{t} \leq \min\{\kappa_{3} \epsilon_g, \kappa_{4} \epsilon_H\} $, it must satisfy the condition of Lemma \ref{lemma:model_g2}. This implies that iteration $ t $ must have been accepted, which is a contradiction. 		\qed
\end{proof}

The following lemma follows closely the line of reasoning in \cite[Lemma 4.5]{cartis2012complexity}.
% -------------------- % succese iterations % -------------------- %
\begin{lemma}[Successful Iterations]
\label{lemma:STR_succ}
Consider any $ \epsilon_{g}, \epsilon_{H} > 0 $ such that $ \epsilon_{H} \leq \sqrt{\epsilon_{g}} $ and let $ \epsilon_{0} \triangleq \alpha (1-\eta)\nu \epsilon_{H} $ for some $ \alpha \in (0,1)$. Further, suppose Condition \ref{condition:Hessian_approximation_H} is satisfied with $\epsilon_{t} \leq \max\{\epsilon_{0}, \Delta_{t}\} $, where $ \Delta_{t} $ is the trust region at the t-th iteration. Let $\mathcal{T}_{succ}$ denote the set of all the successful iterations before Algorithm \ref{alg:STR_fg} stops. Then, under Assumption \ref{assumption:a1}, Condition \ref{condition:STC_sufficient_descent}, the number of successful iterations is upper bounded by,
$$\Abs{\mathcal{T}_{succ}} \le \frac{(F(\x_0) - F_{\min})}{\eta \min\left\{\widehat{\kappa}_\Delta, \widetilde{\kappa}_{\Delta}\right\} } \cdot \max\{\epsilon_g^{-2}\epsilon_H^{-1}, \epsilon_H^{-3}\} $$ 
where $\widehat{\kappa}_\Delta\triangleq  \kappa_\Delta/2, \widetilde{\kappa}_{\Delta}  \triangleq \nu\kappa_\Delta^2/2$, and $ \kappa_{\Delta} $ is as defined in Lemma \ref{lemma:lowerbound_delta}.
\end{lemma}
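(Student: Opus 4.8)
The plan is to establish a uniform per-iteration decrease guarantee on $F$ for every successful iteration and then telescope. Since the objective is reduced only on successful steps (on unsuccessful steps $\x_{t+1}=\x_t$), and $F$ is bounded below by $F_{\min}$, once I show that each successful iteration decreases $F$ by at least $\eta\,\min\{\widehat{\kappa}_\Delta,\widetilde{\kappa}_\Delta\}\,\min\{\epsilon_g^2\epsilon_H,\epsilon_H^3\}$, summing over $\mathcal{T}_{succ}$ and rearranging yields the claimed bound, using the identity $\max\{\epsilon_g^{-2}\epsilon_H^{-1},\epsilon_H^{-3}\}=1/\min\{\epsilon_g^2\epsilon_H,\epsilon_H^3\}$. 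The decrease itself is immediate from the acceptance test: on a successful iteration $\rho_t\ge\eta$, so $F(\x_t)-F(\x_{t+1})=\rho_t(-m_t(\s_t))\ge\eta\,(-m_t(\s_t))$, and the whole task reduces to lower-bounding the model reduction $-m_t(\s_t)$.

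To lower-bound $-m_t(\s_t)$, I would split on why the algorithm has not yet terminated at a successful iteration $t$: since the stopping test requires both $\|\nabla F(\x_t)\|\le\epsilon_g$ and $\lmin(\H_t)\ge-\epsilon_H$ to hold, at least one of $\|\nabla F(\x_t)\|>\epsilon_g$ or $\lmin(\H_t)<-\epsilon_H$ must be true. In the first case I invoke the Cauchy inequality \eqref{eq:STR_Cauchy}, bound $\|\H_t\|\le K_H$ via \eqref{eq:Hessian_boundedness_H}, and substitute the radius lower bound $\Delta_t\ge\kappa_\Delta\min\{\epsilon_g,\epsilon_H\}$ from Lemma \ref{lemma:lowerbound_delta}; since $\kappa_\Delta<1/(1+K_H)$, the inner minimum in \eqref{eq:STR_Cauchy} is realized by the radius term and I obtain $-m_t(\s_t)\ge\tfrac12\kappa_\Delta\,\epsilon_g\min\{\epsilon_g,\epsilon_H\}$. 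In the second case I invoke the Eigen inequality \eqref{eq:STR_Eigen} together with $|\lmin(\H_t)|>\epsilon_H$ and the same radius bound to get $-m_t(\s_t)\ge\tfrac12\nu\kappa_\Delta^2\,\epsilon_H\min\{\epsilon_g^2,\epsilon_H^2\}$.

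The step that needs care is reconciling these two dissimilar-looking lower bounds with the single target quantity $\min\{\epsilon_g^2\epsilon_H,\epsilon_H^3\}$; this is precisely where the two constants $\widehat{\kappa}_\Delta=\kappa_\Delta/2$ and $\widetilde{\kappa}_\Delta=\nu\kappa_\Delta^2/2$ originate. For the Cauchy case I would verify $\epsilon_g\min\{\epsilon_g,\epsilon_H\}\ge\min\{\epsilon_g^2\epsilon_H,\epsilon_H^3\}$ by checking the two sub-cases $\epsilon_H\le\epsilon_g$ and $\epsilon_g<\epsilon_H$ using only $\epsilon_g,\epsilon_H\in(0,1)$, giving $-m_t(\s_t)\ge\widehat{\kappa}_\Delta\min\{\epsilon_g^2\epsilon_H,\epsilon_H^3\}$; for the Eigen case the identity $\min\{\epsilon_g^2,\epsilon_H^2\}\,\epsilon_H=\min\{\epsilon_g^2\epsilon_H,\epsilon_H^3\}$ gives $-m_t(\s_t)\ge\widetilde{\kappa}_\Delta\min\{\epsilon_g^2\epsilon_H,\epsilon_H^3\}$ directly. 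Taking the smaller of the two constants covers both cases simultaneously. I note that $\epsilon_H\le\sqrt{\epsilon_g}$ is not needed in this combining step itself, but is essential upstream, since it is exactly the hypothesis under which Lemma \ref{lemma:lowerbound_delta} supplies the radius lower bound $\Delta_t\ge\kappa_\Delta\min\{\epsilon_g,\epsilon_H\}$ that both cases rely on. With the uniform decrease in hand, the telescoping argument of the first paragraph completes the proof, and the main obstacle is bookkeeping the nested minima so that the two regimes collapse cleanly onto $\min\{\epsilon_g^2\epsilon_H,\epsilon_H^3\}$.
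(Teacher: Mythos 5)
Your proposal is correct and follows essentially the same route as the paper's proof: split on why the termination test fails (large gradient vs.\ sufficiently negative curvature), lower-bound the model decrease via the Cauchy/Eigen conditions \eqref{eq:STR_Cauchy}--\eqref{eq:STR_Eigen} combined with the radius bound $\Delta_t \ge \kappa_\Delta \min\{\epsilon_g,\epsilon_H\}$ from Lemma \ref{lemma:lowerbound_delta}, then telescope the monotone decrease of $F$ over successful iterations using $\rho_t \ge \eta$. The only cosmetic difference is that you spell out the case analysis behind $\epsilon_g\min\{\epsilon_g,\epsilon_H\} \ge \min\{\epsilon_g^2\epsilon_H,\epsilon_H^3\}$, which the paper leaves implicit.
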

\begin{proof}
Suppose Algorithm \ref{alg:STR_fg} doesn't terminate at the t-th iteration. Then we have either $\norm{\nabla F(\x_{t})} \ge \epsilon_g$ or $\lmin(\Delta^2 F(\x_t)) \le -\epsilon_{H}$. In the first case, %where $\norm{\nabla F(\x_{t})} \ge \epsilon_g$, 
from \eqref{eq:STR_Cauchy}, we have
\begin{align*}
-m_t(\s_t) & \ge \frac{\epsilon_g}{2} \min\left\{\frac{\epsilon_g}{1+K_{H}}, \Delta_t\right\}\ge \frac{\epsilon_g}{2} \min\left\{\frac{\epsilon_g}{1+K_{H}}, \kappa_\Delta \epsilon_g, \kappa_\Delta \epsilon_H\right\}
\geq \widehat{\kappa}_\Delta \epsilon_g \min\{\epsilon_g, \epsilon_H\},
\end{align*}
where $ \kappa_{\Delta} $ is as defined in Lemma \ref{lemma:lowerbound_delta}.
%where
%\begin{align*}
%$\widehat{\kappa}_\Delta \triangleq \frac{1}{2} \min\left\{\frac{1}{1+K_{H}}, \kappa_\Delta\right\}$. 
%\end{align*}
Similarly, in the second case, from \eqref{eq:STR_Eigen}, we obtain
\begin{align*}
-m_t(\s_t) \ge \frac{1}{2}\nu\Abs{\lmin(\H_t)}\Delta_t^2 \ge \frac{1}{2}\nu\kappa_\Delta^2 \epsilon_H\min\{\epsilon_g^2, \epsilon_H^2\} = \widetilde{\kappa}_{\Delta} \epsilon_H\min\{\epsilon_g^2, \epsilon_H^2\}.
\end{align*}
%where
%\begin{align*}
%$\widetilde{\kappa}_{\Delta}  \triangleq \frac{1}{2}\nu\kappa_\Delta^2$.
%\end{align*}
Since $F(\x_t)$ is monotonically decreasing, we have
\begin{align*}
F(\x_0) - F_{\min} &\ge \sum_{t=0}^\infty F(\x_t) - F(\x_{t+1}) \ge \sum_{t\in \mathcal{T}_{succ}} F(\x_t) - F(\x_{t+1})
\\ & \ge \eta \sum_{t\in \mathcal{T}_{succ}} \min\left\{\widehat{\kappa}_\Delta \epsilon_g \min\{\epsilon_g, \epsilon_H\}, \widetilde{\kappa}_{\Delta} \epsilon_H\min\{\epsilon_g^2, \epsilon_H^2\}\right\}
\\ & \geq \Abs{\mathcal{T}_{succ}} \eta \min\left\{\widehat{\kappa}_\Delta, \widetilde{\kappa}_{\Delta}\right\}  \min\{\epsilon_g^2 \epsilon_H, \epsilon_H^3\}.
\end{align*}
Hence, we have $\Abs{\mathcal{T}_{succ}} \le (F(\x_0) - F_{\min}) \max\{\epsilon_g^{-2}\epsilon_H^{-1}, \epsilon_H^{-3}\} / (\eta \min\left\{\widehat{\kappa}_\Delta, \widetilde{\kappa}_{\Delta}\right\} )$. \qed
\end{proof}

Now we are ready to present the final complexity in Theorem \ref{theorem:STR_main_det}.
\begin{theorem}[Optimal Complexity of Algorithm \ref{alg:STR_fg}]%{theorem}{theoremSTC}
	\label{theorem:STR_main_det}
	Consider any $ \epsilon_{g},\epsilon_{H} > 0$ such that $ \epsilon_{H} \leq \sqrt{\epsilon_{g}} $ and let $ \epsilon_{0} \triangleq \alpha (1-\eta)\nu \epsilon_{H} $ for some $ \alpha \in (0,1)$ where $ \eta $ is a hyper-parameter in Algorithm \ref{alg:STR_fg}, and $ \nu$ is as in \eqref{eq:STR_Eigen}. Suppose the inexact Hessian, $ \H(\x)$, satisfies Condition \ref{condition:Hessian_approximation_H} with the approximation tolerance, $  \epsilon_{t} $, in \eqref{eq:Hessian_accuracy_H} as 
	\begin{align}
	\label{eq:STR_epsilon}
	%\epsilon \in (0, \alpha (1-\eta)\nu\cdot \epsilon_H),
%	\epsilon < (1-\eta)\nu \epsilon_H,
	\epsilon_{t} \leq \max \left\{ \epsilon_{0}, \Delta_{t} \right\},
	\end{align}
	where $ \Delta_{t} $ is the trust region at the t-th iteration. For Problem \eqref{eq:obj}, under Assumption \ref{assumption:a1} and Condition \ref{condition:STC_sufficient_descent}, Algorithm \ref{alg:STR_fg} terminates after at most 
%	\begin{align*}
	$T \in \bigO\left(\max\{\epsilon_g^{-2}\epsilon_H^{-1}, \epsilon_H^{-3}\}\right)$
%	\end{align*}
	iterations.
\end{theorem}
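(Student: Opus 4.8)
The plan is to bound the total number of iterations by counting successful and unsuccessful iterations separately and then combining the two. The number of successful iterations is already under control via Lemma \ref{lemma:STR_succ}, so the real work is to bound the number of unsuccessful iterations; I would do this by exploiting the dynamics of the trust-region radius together with the lower bound of Lemma \ref{lemma:lowerbound_delta}. Accordingly, let $\mathcal{T}_{succ}$ and $\mathcal{T}_{fail}$ denote the sets of successful and unsuccessful iterations before termination, and let $T$ be the termination index, so that $T = |\mathcal{T}_{succ}| + |\mathcal{T}_{fail}|$.

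The key bookkeeping observation is that in Algorithm \ref{alg:STR_fg} the radius is multiplied by $\gamma > 1$ on every successful iteration and divided by $\gamma$ on every unsuccessful one. Hence, tracking the radius from $\Delta_0$ to the terminal iterate gives the identity
\[\Delta_T = \Delta_0 \, \gamma^{\,|\mathcal{T}_{succ}| - |\mathcal{T}_{fail}|}.\]
Since the algorithm has not yet terminated at index $T$, Lemma \ref{lemma:lowerbound_delta} applies and yields $\Delta_T \ge \kappa_{\Delta}\min\{\epsilon_g, \epsilon_H\}$. Combining this lower bound with the identity above and taking logarithms produces
\[|\mathcal{T}_{fail}| \le |\mathcal{T}_{succ}| + \frac{1}{\log\gamma}\log\frac{\Delta_0}{\kappa_{\Delta}\min\{\epsilon_g, \epsilon_H\}},\]
so the number of failed iterations exceeds the number of successful ones by at most a term of order $\log(1/\min\{\epsilon_g,\epsilon_H\})$.

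Adding the two counts then gives $T \le 2|\mathcal{T}_{succ}| + (\log\gamma)^{-1}\log\big(\Delta_0 / (\kappa_{\Delta}\min\{\epsilon_g,\epsilon_H\})\big)$. Substituting the bound on $|\mathcal{T}_{succ}|$ from Lemma \ref{lemma:STR_succ}, the leading inverse-polynomial factor $\mathcal{O}(\max\{\epsilon_g^{-2}\epsilon_H^{-1}, \epsilon_H^{-3}\})$ dominates the logarithmic correction, delivering the claimed complexity $T \in \mathcal{O}(\max\{\epsilon_g^{-2}\epsilon_H^{-1}, \epsilon_H^{-3}\})$.

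The one genuinely delicate point is the radius-counting step: I must ensure the multiplicative identity for $\Delta_T$ is set up correctly and that the lower bound of Lemma \ref{lemma:lowerbound_delta} is legitimately applied at the not-yet-terminated iterate (rather than after the stopping test has fired). Everything else is routine — in particular, the observation that $\log(1/\epsilon)$ is negligible against inverse-polynomial factors in $\epsilon$, and the fact that the constants $\widehat{\kappa}_\Delta$, $\widetilde{\kappa}_\Delta$, and $\kappa_\Delta$ are absorbed into the $\mathcal{O}(\cdot)$ notation.
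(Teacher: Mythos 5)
Your proposal is correct and follows essentially the same route as the paper's own proof: the identity $\Delta_T = \Delta_0\,\gamma^{|\mathcal{T}_{succ}|-|\mathcal{T}_{fail}|}$, the lower bound on $\Delta_T$ from Lemma \ref{lemma:lowerbound_delta}, the resulting bound $|\mathcal{T}_{fail}| \le |\mathcal{T}_{succ}| + \bigO(\log(1/\min\{\epsilon_g,\epsilon_H\}))$, and the substitution of Lemma \ref{lemma:STR_succ} are exactly the steps the paper takes. The delicate point you flag is handled automatically, since Lemma \ref{lemma:lowerbound_delta} asserts the radius lower bound for \emph{all} $t \ge 0$ before termination, so no separate justification is needed at the terminal index.
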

\begin{proof}
Suppose Algorithm \ref{alg:STR_fg} terminates at the t-th iteration.
Let $\mathcal{T}_{succ}$ and $\mathcal{T}_{fail}$ denote the sets of all the successful and unsuccessful iterations, respectively. Then $T = \Abs{\mathcal{T}_{succ}} + \Abs{\mathcal{T}_{fail}}$ and $\Delta_T = \Delta_0 \gamma^{\Abs{\mathcal{T}_{succ}} - \Abs{\mathcal{T}_{fail}}}$, where $ \gamma $ is a hyper-parameter of Algorithm \ref{alg:STR_fg}. From Lemma \ref{lemma:lowerbound_delta}, we have $\Delta_T \ge \kappa_\Delta \min\{\epsilon_g, \epsilon_H\}$. Hence, $\left(\Abs{\mathcal{T}_{succ}} - \Abs{\mathcal{T}_{fail}}\right) \log \gamma \ge \log\left( \kappa_\Delta \cdot \min\{\epsilon_g, \epsilon_H\}/\Delta_0\right)$,
which implies $\Abs{\mathcal{T}_{fail}} \le \log\left( \Delta_0/\left(\kappa_\Delta\cdot \min\{\epsilon_g, \epsilon_H\}\right)\right)/\log \gamma + \Abs{\mathcal{T}_{succ}}$.
Combine the result from Lemma \ref{lemma:STR_succ}, we have the total iteration complexity as
\begin{align*}
T &\le \frac{1}{\log \gamma}\log\left( \frac{\Delta_0} {\kappa_\Delta\cdot \min\{\epsilon_g, \epsilon_H\}}\right) + \frac{2(F(\x_0) - F_{\min})}{\eta \min\left\{\widehat{\kappa}_\Delta, \widetilde{\kappa}_{\Delta}\right\} } \cdot \max\{\epsilon_g^{-2}\epsilon_H^{-1}, \epsilon_H^{-3}\} 
\\&\in \bigO\left(\max\{\epsilon_g^{-2}\epsilon_H^{-1}, \epsilon_H^{-3}\}\right),
\end{align*}
where $ \kappa_\Delta, \widehat{\kappa}_\Delta, \widetilde{\kappa}_{\Delta}$ are defined in the proofs of Lemmas \ref{lemma:lowerbound_delta} and  \ref{lemma:STR_succ}, respectively.
\qed
\end{proof}

As it can be seen, the worst-case total number of iterations required by Algorithm \ref{alg:STR_fg} before termination, matches the optimal iteration complexity obtained in \cite{cartis2012complexity}. Furthermore, from \eqref{eq:Hessian_accuracy_H}, it follows that upon termination of Algorithm \ref{alg:STR_fg} after $ T $ iterations, in addition to $ \norm{\nabla F(\x_{T})} \leq \epsilon_{g} $, we have $ \lambda_{\min}\left(\nabla^{2} F(\x_{T})\right) \geq -(\epsilon_{H} + \epsilon_{T}) $, i.e., the obtained solution satisfies $(\epsilon_g, \epsilon_{T} + \epsilon_H)$-Optimality as in \eqref{eq:2ndopt}.
	
For Algorithm \ref{alg:STR_fg}, the Hessian approximation tolerance $ \epsilon_{t} $ is allowed to be chosen per-iteration as $ \epsilon_{t} \leq \mathcal{O}\left(\max\{\epsilon_{H},\Delta_{t}\}\right) $. This way, when $ \Delta_{t} $ is large (e.g., at the beginning of iterations), one can employ crude Hessian approximations. As iterations progress towards optimality, $ \Delta_{t} $ can get very small, in which case Hessian accuracy is set in the order of $ \epsilon_{H} $. Note that by Lemma \ref{lemma:lowerbound_delta}, we are always guaranteed to have $ \Delta_{t} \in \Omega\left(\min \left\{\epsilon_{g}, \epsilon_{H} \right\}\right) $. As a result, when $ \epsilon_{g} \ll \epsilon_{H} $, e.g.,  $ \epsilon_{H}^{2} = \epsilon_{g} = \epsilon$, we can have that $ \Delta_{t} \ll \epsilon_{H}$. In such cases, the choice $ \epsilon_{t} \leq \mathcal{O}\left(\max\{\epsilon_{H},\Delta_{t}\}\right) $ ensures that the Hessian approximation tolerance never gets unnecessarily too small.
%As a result, our new modified analysis can offer the best of both worlds in cases where $ \epsilon_{g} \ll \epsilon_{H} $, and in other cases, at least as good as similar prior work.
%For example, a computationally efficient approach for tuning $ \epsilon_{t} $ such that at termination we have $ \lambda_{\min}\left(\nabla^{2} F(\x_{T})\right) \in \Omega (-\epsilon_{H}) $ is to start with large values, e.g., at the beginning of iterations when $ \Delta_{t} $ is large, we can set $ \epsilon_{t} \in \mathcal{O}(\Delta_{t}) $, and then, as iterations progress, we can gradually increase the Hessian approximation accuracy such that $ \epsilon_{t} \in \mathcal{O}(\epsilon_{H}) $ whenever $ \Delta_{t} \in \Omega(\epsilon_{H}) $.

% -----------------------------------------------------------------------
%
% -----------------------------------------------------------------------

\subsection{Adaptive Cubic Regularization with Inexact Hessian}
\label{sec:ARC_analysis}
Similar to Section \ref{sec:TR_analysis}, in this section, we present the algorithm and its corresponding convergence results for the case of adaptive cubic regularization with inexact Hessian. In particular, Algorithm \ref{alg:SARC_fg} depicts a variant of ARC algorithm where at each iteration $ t $, the inexact approximation, $\H_{t} $, is constructed according to Condition \ref{condition:Hessian_approximation_H}.
Here, unlike Section \ref{sec:TR_analysis}, we were unable to provide convergence guarantees with adaptive tolerance in \eqref{eq:Hessian_accuracy_H} and as result, $ \epsilon $ is set fixed a priori to a sufficiently small value, i.e., $ \epsilon \in \mathcal{O}(\sqrt{\epsilon_{g}}, \epsilon_{H})$ to guarantee $(\epsilon_{g}, \epsilon_{H})$-optimality.

\begin{algorithm}[!htbp]
	\caption{Adaptive Cubic Regularization with Inexact Hessian}
	\label{alg:SARC_fg}
	\begin{algorithmic}[1]
		\STATE {\bf Input:} Starting point $\x_0$, initial regularization $0 < \sigma_0 < \infty$, hyper-parameters $\epsilon_{g}, \epsilon_{H}, \eta\in(0,1), \gamma > 1$ 
		\FOR{$ t = 0,1,\ldots $}
		\STATE Set the approximating Hessian, $ \H_t$, as in \eqref{eq:Hessian_approximation} \label{step:SARC_step}  
		\IF{$\|\nabla F(\x_{t})\| \le \epsilon_g,  \lmin(\H_t) \ge -\epsilon_H\;$}
		\STATE  Return $\x_t$.
		\ENDIF
		\STATE Solve the sub-problem approximately
		\begin{align}
		\label{eq:SARC_subp}
		\s_t \approx \arg\min_{\s \in \mathbb{R}^{d}}~~ m_t(s) \triangleq \lin{\nabla F(\x_{t}), \s} + \frac{1}{2} \lin{\s, \H_t \s} + \frac{\sigma_{t}}{3} \|\s\|^{3}  %\tag{$\ast\ast$}
		\end{align}
		\STATE Set $\rho_t \triangleq \dfrac{F(\x_t) - F(\x_t + \s_t)}{-m_t(\s_t)}$
		\IF {$\rho_t \ge \eta$}
		\STATE $\x_{t+1} = \x_t + \s_t$
		\STATE $\sigma_{t+1} = \sigma_t/\gamma$
		\ELSE
		\STATE $\x_{t+1} = \x_t$
		\STATE $\sigma_{t+1} = \gamma \sigma_t$
		\ENDIF
		\ENDFOR
		\STATE {\bf Output:} $\x_{t}$
	\end{algorithmic}
\end{algorithm}

Similar to Algorithm \ref{alg:STR_fg}, here we also require that the sub-problem \eqref{eq:SARC_subp} in Algorithm \ref{alg:SARC_fg} is solved only approximately. Although similar inexact solutions to the sub-problem \eqref{eq:SARC_subp} by using Cauchy and Eigenpoint has been considered in several previous work, e.g., \cite{cartis2012complexity}, here we provide refined conditions which prove to be instrumental in obtaining iteration complexities with the relaxed Hessian approximation \eqref{eq:Hessian_accuracy_H}, as opposed to the stronger Condition \eqref{eq:Hessian_accuracy_H_quadratic_SARC}. 
%For the proof that the Cauchy and Eigenpoint, indeed satisfy the inequalities \eqref{eq:SARC_cond}, see Appendix \ref{sec:SARC_proofs}. 

\begin{condition}[Sufficient Descent Cauchy \& Eigen Directions]
	\label{condition:SARC_sufficient_descent} 
	Assume that we solve the sub-problem \eqref{eq:SARC_subp} approximately to find $\s_{t}$ such that
	\begin{subequations}%{$ \textbf{S}_{_{\textbf{CR}}} $}
		\label{eq:SARC_cond}
		\begin{align}
		&-m_t(\s_{t}) \ge - m_t(\s_t^C) \ge \max \Bigg\{ \frac{1}{12}  \|\s_t^{C}\|^{2} \left( \sqrt{K_{H}^{2} + 4 \sigma_{t} \|\nabla F(\x_{t})\|} - K_{H}\right), \Bigg. \nonumber\\
		& \quad \quad \quad \quad \quad \quad \quad \quad \quad \quad  \quad \quad \quad \Bigg.\frac{\|\nabla F(\x_{t})\|}{2 \sqrt{3} } \min \Bigg\{\frac{\|\nabla F(\x_{t}) \|}{K_{H}} , \sqrt{\frac{\|\nabla F(\x_{t}) \|}{\sigma_{t}}} \Bigg\} \Bigg\}, \label{eq:SARC_Cauchy} \\
		&-m_t(\s_{t}) \ge - m_t(\s_t^E) \geq \frac{\nu |\lambda_{\min}(\H_{t})|}{6}  \max \left\{ \|\s_{t}^{E}\|^{2} , \frac{\nu^{2} |\lambda_{\min}(\H_{t})|^{2}}{\sigma_{t}^{2}} \right\}, \; \text{if} \lmin(\H_t) < 0. \label{eq:SARC_Eigen}
		\end{align}
	\end{subequations}
	Here $m_t(\cdot)$ is defined in \eqref{eq:SARC_subp}, $\s_t^C$ (Cauchy point) is along negative gradient direction and $\s_t^E$ is along approximate negative curvature direction such that $\lin{\s_t^E, \H_t \s_t^E} \le \nu\lmin(\H_t)\|\s_t^E\|^2 < 0$ for some $ \nu \in (0,1] $ (see Appendix B for a way to efficiently compute $ \s_{t}^{E} $).
\end{condition}
Note that Condition \eqref{eq:SARC_cond} describes the quality of the descent obtained by Cauchy and Eigen directions more accurately than is usually found in similar literature. A natural way to ensure that the approximate solution to the sub-problem \eqref{eq:SARC_subp} satisfies \eqref{eq:SARC_cond}, is by replacing the unconstrained high-dimensional sub-problem \eqref{eq:SARC_subp} with the following constrained but lower-dimensional problem, in which the search space is reduced to a two-dimensional sub-space containing vectors $\s_t^C$, and $\s_t^E$, i.e.,
\begin{align*}
\s_t = \arg\min_{\s \in \text{Span}\{\s_t^C, \s_t^E\}} \lin{\nabla F(\x_{t}), \s} + \frac{1}{2}\lin{\s, \H_t \s} + \frac{\sigma_{t}}{3} \|\s\|^{3}.
\end{align*}
Note that, if $ \U \in \mathbb{R}^{d \times p} $ is an orthogonal basis for the sub-space ``$ \text{Span}\{\s_t^C, \s_t^E\} $'', by a linear transformation, we can turn the above sub-problem into an unconstrained problem as
\begin{align*}
\vv{v}_t = \arg\min_{\vv{v} \in \mathbb{R}^{p}} \lin{U^{T} \nabla F(\x_{t}), \vv{v}} + \frac{1}{2}\lin{\vv{v}, \U^{T} \H_t \U \vv{v}} + \frac{\sigma_{t}}{3} \|\vv{v}\|^{3},
\end{align*}
and set $ \s_{t} = \U \vv{v}_{t} $. As before, any larger dimensional sub-space $ \mathcal{P} $ for which we have $ \text{Span}\{\s_t^C, \s_t^E\} \subseteq \mathcal{P} $ would also ensure \eqref{eq:SARC_cond}, and, indeed, implies a more accurate solution to our original sub-problem \eqref{eq:SARC_subp}. 

% Theorem \ref{theorem:SARC_main_det} gives the iteration complexity of Algorithm \ref{alg:SARC_fg} for the case where the approximate solution to the sub-problem \eqref{eq:SARC_subp} is only required to satisfy the inexactness Condition \ref{condition:SARC_sufficient_descent}. The proof of Theorem \ref{theorem:SARC_main_det} can be found in Appendix \ref{sec:SARC_proofs}.

Lemmas \ref{lemma:SARC_Cauchy} and \ref{lemma:SARC_Eigen} describe the model reduction obtained by Cauchy and eigen points as required by Condition \eqref{condition:SARC_sufficient_descent}.
%We first show the quality of Cauchy direction and negative curvature direction for solving the subproblem approximately, as in Lemma \ref{lemma:SARC_Cauchy} \& \ref{lemma:SARC_Eigen}.
% -------------------- % Descent with Cauchy Direction % -------------------- %
\begin{lemma}[Descent with Cauchy Direction]
	\label{lemma:SARC_Cauchy}
	Consider the Cauchy direction as 
	$\s_{t}^{C}  = - \alpha \nabla F(\x_{t})$ where $\alpha = \arg\min_{\widehat{\alpha} \geq 0} m_{t}(- \widehat{\alpha} \nabla F(\x_{t}))$.
	 We have
	\begin{align*}
	& - m_t(\s_t^C) \ge \max \Bigg\{ \frac{1}{12}  \|\s_t^{C}\|^{2} \left( \sqrt{K_{H}^{2} + 4 \sigma_{t} \|\nabla F(\x_{t})\|} - K_{H}\right), \Bigg. \nonumber\\
	& \quad \quad \quad \quad \quad  \quad \quad \quad \Bigg.\frac{\|\nabla F(\x_{t})\|}{2 \sqrt{3} } \min \Bigg\{\frac{\|\nabla F(\x_{t}) \|}{K_{H}} , \sqrt{\frac{\|\nabla F(\x_{t}) \|}{\sigma_{t}}} \Bigg\} \Bigg\}.
%	- m_t(\s_t^C) &\ge \max\left\{ \frac{1}{6} \sigma_{t} \|\s_t^{C}\|^{2} \left( \sqrt{K_{H}^{2} + 4 \sigma_{t} \|\nabla F(\x_{t})\|} - K_{H}\right), \frac{4 \|\nabla F(\x_{t})\|}{6 \sqrt{3} } \min \left\{\frac{\|\nabla F(\x_{t}) \|}{K_{H}} , \sqrt{\frac{\|\nabla F(\x_{t}) \|}{\sigma_{t}}} \right\} \right\}.
	\end{align*}
\end{lemma}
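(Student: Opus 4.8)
The plan is to reduce the minimization defining $\s_t^C$ to a one-dimensional cubic and then exploit exact optimality along the gradient ray. Writing $\g \triangleq \nabla F(\x_{t})$, $g \triangleq \|\g\|$, and introducing the Rayleigh quotient $\gamma_t \triangleq \lin{\g, \H_t \g}/g^2$ (so that $|\gamma_t| \le \|\H_t\| \le K_H$ by \eqref{eq:Hessian_boundedness_H}), the objective restricted to the ray becomes $\phi(\widehat\alpha) \triangleq m_t(-\widehat\alpha \g) = -g^2 \widehat\alpha + \tfrac12 \gamma_t g^2 \widehat\alpha^2 + \tfrac13 \sigma_t g^3 \widehat\alpha^3$. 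Since $\phi'(0) = -g^2 < 0$, the minimizer $\alpha$ is the unique positive root of $\phi'(\widehat\alpha)=0$, i.e. $\sigma_t g\, \alpha^2 + \gamma_t \alpha - 1 = 0$, giving $\alpha = (-\gamma_t + \sqrt{\gamma_t^2 + 4\sigma_t g})/(2\sigma_t g)$ and $\|\s_t^C\| = \alpha g$. The degenerate case $g = 0$ renders both bounds trivially $0 \ge 0$, so I assume $g > 0$.

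To obtain the first term of the maximum, I would substitute the optimality relation $\gamma_t \alpha = 1 - \sigma_t g \alpha^2$ into $\phi(\alpha)$ and simplify to the clean identity $-m_t(\s_t^C) = \tfrac12 \alpha g^2 + \tfrac16 \sigma_t \alpha^3 g^3$, in which both summands are nonnegative. Dropping the first summand gives $-m_t(\s_t^C) \ge \tfrac16 \sigma_t \alpha^3 g^3 = \tfrac1{12}\|\s_t^C\|^2 \cdot 2\sigma_t \alpha g$. Finally, because $x \mapsto -x + \sqrt{x^2 + 4\sigma_t g}$ is decreasing and $\gamma_t \le K_H$, we have $\alpha \ge \beta \triangleq (-K_H + \sqrt{K_H^2 + 4\sigma_t g})/(2\sigma_t g)$, and by construction $2\sigma_t \beta g = \sqrt{K_H^2 + 4\sigma_t g} - K_H$; hence $2\sigma_t \alpha g \ge \sqrt{K_H^2 + 4\sigma_t g} - K_H$, which yields exactly the first bound.

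For the second term I would switch from the exact minimizer to trial steps: since $\alpha$ minimizes $\phi$, for every $\widehat\alpha \ge 0$ we have $-m_t(\s_t^C) \ge -\phi(\widehat\alpha) \ge g^2\big(\widehat\alpha - \tfrac{K_H}{2}\widehat\alpha^2 - \tfrac{\sigma_t g}{3}\widehat\alpha^3\big)$, using $\gamma_t \le K_H$. Setting $M \triangleq \max\{K_H, \sqrt{\sigma_t g}\}$ and $\widehat\alpha = 1/(2M)$, the two correction terms are controlled uniformly via $K_H \le M$ and $\sigma_t g \le M^2$, giving $\tfrac{K_H}{2}\widehat\alpha^2 \le \tfrac1{8M}$ and $\tfrac{\sigma_t g}{3}\widehat\alpha^3 \le \tfrac1{24M}$, so the bracket is at least $(\tfrac12 - \tfrac18 - \tfrac1{24})/M = \tfrac1{3M}$. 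Since $g/M = \min\{g/K_H, \sqrt{g/\sigma_t}\}$, this reads $-m_t(\s_t^C) \ge \tfrac{g}{3}\min\{g/K_H, \sqrt{g/\sigma_t}\}$, and $\tfrac13 \ge \tfrac1{2\sqrt3}$ delivers the stated second bound. Taking the maximum of the two estimates completes the argument.

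The one genuinely delicate point is the first bound: it cannot be extracted from trial steps and instead requires both the exact one-dimensional optimality condition (to collapse $-m_t(\s_t^C)$ to the two-term identity) and the monotonicity argument bounding $\alpha$ below by the curvature-free quantity $\beta$. The second bound is, by contrast, a routine trial-step computation once the unifying choice $M = \max\{K_H, \sqrt{\sigma_t g}\}$ is made, which conveniently absorbs the case split between the curvature-dominated regime $\sigma_t g \le K_H^2$ and the regularization-dominated regime $\sigma_t g > K_H^2$ into a single estimate.
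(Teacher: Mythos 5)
Your proof is correct and follows essentially the same route as the paper: the same characterization of $\alpha$ as the positive root of the one-dimensional optimality condition, the same monotonicity argument in $\gamma_t \le K_H$ via $x \mapsto \sqrt{x^2+\beta}-x$ for the first bound, and the same trial-step computation with $M = \max\{K_H, \sqrt{\sigma_t \|\nabla F(\x_t)\|}\}$ for the second. The only difference is cosmetic and slightly to your credit: where the paper cites \cite[Lemma 2.1]{cartis2012complexity} for $-m_t(\s_t^C) \ge \sigma_t\|\s_t^C\|^3/6$, you derive it directly from the identity $-m_t(\s_t^C) = \tfrac12 \alpha g^2 + \tfrac16 \sigma_t \alpha^3 g^3$, making the argument self-contained, and your trial step $\widehat\alpha = 1/(2M)$ yields the constant $\tfrac13$ before relaxing to the stated $\tfrac{1}{2\sqrt{3}}$.
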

\begin{proof}
For any $ \widehat{\alpha} \geq0 $, we have $ m_{t}(- \widehat{\alpha} \nabla F(\x_{t})) \leq m_{t}(\widehat{\alpha} \nabla F(\x_{t})) $, which implies $\alpha = \arg\min_{\widehat{\alpha} \in \mathbb{R}} m_{t}(- \widehat{\alpha} \nabla F(\x_{t}))$. Hence, we have $- \|\nabla F(\x_{t})\|^{2} + \alpha \lin{\nabla F(\x_{t}), \H_{t} \nabla F(\x_{t})} + \sigma_{t} \alpha^{2} \|\nabla F(\x_{t})\|^{3} = 0$.
We can find explicit formula for such $ \alpha $ by finding the roots of the quadratic function $r(\alpha) = \sigma_{t} \|\nabla F(\x_{t})\|^{3} \alpha^{2} + \lin{\nabla F(\x_{t}), \H_{t} \nabla F(\x_{t})} \alpha - \|\nabla F(\x_{t})\|^{2}$.
Hence, we must have
\begin{align*}
\alpha &= \frac{- \lin{\nabla F(\x_{t}), \H_{t} \nabla F(\x_{t})} + \sqrt{\big(\lin{\nabla F(\x_{t}), \H_{t} \nabla F(\x_{t})}\big)^{2} + 4 \sigma_{t} \|\nabla F(\x_{t})\|^{5}}}{2 \sigma_{t} \|\nabla F(\x_{t})\|^{3}} \geq 0.
\end{align*}
It follows that
\begin{align*}
2\alpha \sigma_t\|\nabla F(\x_{t})\| &= \sqrt{\left(\frac{\lin{\nabla F(\x_{t}), \H_{t} \nabla F(\x_{t})}}{\|\nabla F(\x_{t})\|^{2}}\right)^{2} + 4 \sigma_{t} \|\nabla F(\x_{t})\|} - \frac{\lin{\nabla F(\x_{t}), \H_{t} \nabla F(\x_{t})}}{\|\nabla F(\x_{t})\|^{2}}.
\end{align*}
\noindent
Consider the function $ h(x; \beta) = \sqrt{x^2 + \beta} - x$. It is easy to verify that, for $ \beta \geq 0 $, $ h(x) $ is decreasing function of $ x $. Now since 
$\lin{\nabla F(\x_{t}), \H_{t} \nabla F(\x_{t})} \leq K_{H} \|\nabla F(\x_{t})\|^{2}$, we get
\begin{align}
\label{eq:norm_cauchy_direction}
\|\s_{t}^{C}\|  = \alpha \|\nabla F(\x_{t})\| \geq \frac{1}{2\sigma_t}\left[\sqrt{K_{H}^{2} + 4 \sigma_{t} \|\nabla F(\x_{t})\|} - K_{H}\right]. 
\end{align}
Now, from \cite[Lemma 2.1]{cartis2012complexity}, we get
\begin{align*}
- m_{t}(s_{t}^{C}) \geq \frac{\sigma_{t} \|\s_{t}^{C}\|^{3}}{6}  = \frac{\|\s_{t}^{C}\|^{2} }{6}\alpha \sigma_t\|\nabla F(\x_{t})\|  \geq \frac{ \|\s_{t}^{C}\|^{2}}{12} ( \sqrt{K_{H}^{2} + 4 \sigma_{t} \|\nabla F(\x_{t})\|} - K_{H}).
\end{align*}
Alternatively, following the proof of \cite[Lemma 2.1]{cartis2011adaptiveI}, for any $ \alpha \geq 0 $, we get
\begin{align*}
m_{t}(s_{t}^{C}) &\leq m_{t}(- \alpha \nabla F(\x_{t})) \\
&= -\alpha \|\nabla F(\x_{t})\|^{2} + \frac{1}{2} \alpha^{2} \lin{\nabla F(\x_{t}), \H_{t} \nabla F(\x_{t})} + \frac{\alpha^{3}}{3}\sigma_{t} \|\nabla F(\x_{t})\|^{3}\\
& \leq \frac{\alpha \|\nabla F(\x_{t})\|^{2}}{6} \left( - 6 + 3 \alpha K_{H} + 2 \alpha^{2}\sigma_{t} \|\nabla F(\x_{t})\| \right).
\end{align*}
Consider the quadratic polynomial $r(\alpha) = 2 \alpha^{2}\sigma_{t} \|\nabla F(\x_{t})\| + 3 \alpha K_{H} - 6$.
We have $ r(\alpha) \leq 0 $ for $ \alpha \in [0,\bar{\alpha}] $, where
\begin{align*}
\bar{\alpha} = \frac{-3 K_{H} + \sqrt{9 K_{H}^{2} + 48 \sigma_{t} \|\nabla F(\x_{t})\|}}{4 \sigma_{t} \|\nabla F(\x_{t})\|}= \frac{ 12 }{\left(3 K_{H} + \sqrt{9 K_{H}^{2} + 48 \sigma_{t} \|\nabla F(\x_{t})\|}\right)}.
\end{align*}
%We can express $ \bar{\alpha} $ as
%\begin{align*}
%\bar{\alpha} =c\frac{ 12 }{\left(3 K_{H} + \sqrt{9 K_{H}^{2} + 48 \sigma_{t} \|\nabla F(\x_{t})\|}\right)}.
%\end{align*}
Note that
% \begin{align*}
$\sqrt{9 K_{H}^{2} + 48 \sigma_{t} \|\nabla F(\x_{t})\|} 
%& \leq 2 \max \left\{3 K_{H} , 4 \sqrt{3 \sigma_{t} \|\nabla F(\x_{t}) \|} \right\} 
\leq 8 \sqrt{3} \max \left\{K_{H} , \sqrt{\sigma_{t} \|\nabla F(\x_{t}) \|} \right\}$
% \end{align*}
and trivially
% \begin{align*}
$3 K_{H} \leq 4 \sqrt{3} \max \left\{K_{H} , \sqrt{\sigma_{t} \|\nabla F(\x_{t}) \|} \right\}$.
% \end{align*}
Hence, defining
%\begin{align*}
$\alpha_{0} \triangleq 1/(\sqrt{3} \max \{K_{H} , \sqrt{\sigma_{t} \|\nabla F(\x_{t}) \|} \})$, it is easy to see that 
%\end{align*}
% we have
% \begin{align*}
% \frac{3 K_{H} + \sqrt{9 K_{H}^{2} + 48 \sigma_{t} \|\nabla F(\x_{t})\|}}{12} \leq \sqrt{3} \max \left\{K_{H} , \sqrt{\sigma_{t} \|\nabla F(\x_{t}) \|} \right\},
% \end{align*}
% which implies
 $ 0 < \alpha_{0} \leq \bar{\alpha}$. With this $ \alpha_{0} $, we get $ r(\alpha_{0}) \leq 2/9 + 3/\sqrt{3} - 6 \leq -3.$ Therefore
\begin{align*}
m_{t}(s_{t}) &\leq \frac{- 3 \|\nabla F(\x_{t})\|^{2}}{6 \sqrt{3} \max \left\{K_{H} , \sqrt{\sigma_{t} \|\nabla F(\x_{t}) \|} \right\}} \\
&= \frac{-  \|\nabla F(\x_{t})\|}{2 \sqrt{3} } \min \left\{\frac{\|\nabla F(\x_{t}) \|}{K_{H}} , \sqrt{\frac{\|\nabla F(\x_{t}) \|}{\sigma_{t}}} \right\}.
\end{align*}
\qed
\end{proof}

% -------------------- % Descent with Negative Curvature % -------------------- %
\begin{lemma}[Descent with Negative Curvature]
	\label{lemma:SARC_Eigen}
	Suppose $ \lmin(\H_t) < 0 $. For some $ \nu \in (0,1] $, define
	% \begin{align*}
	$\s^{E}_{t}  = \alpha \u_{t}$, where $\alpha = \arg\min_{\widehat{\alpha} \in \mathbb{R}} m_{t}(\widehat{\alpha} \u_{t})$,
%	\end{align*}
	 and 
%	 \begin{align*}
	 $\lin{\u_{t}, \H_{t} \u_{t}} \leq \nu \lambda_{\min}(\H_{t}) \|\u_t\|^{2} < 0$.
	 % \end{align*} 
	 We have
	\begin{align*}
	- m_t(\s_t^E) \geq \frac{\nu |\lambda_{\min}(\H_{t})|}{6}  \max \left\{ \|\s_{t}^{E}\|^{2} , \frac{\nu^{2} |\lambda_{\min}(\H_{t})|^{2}}{\sigma_{t}^{2}} \right\}.
	\end{align*}
\end{lemma}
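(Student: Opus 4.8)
The plan is to reduce the problem to a one-dimensional cubic minimization along the fixed direction $\u_t$ and to establish the two entries of the maximum by separate, essentially independent arguments, since the claimed bound is $-m_t(\s_t^E) \ge \tfrac{\nu|\lambda_{\min}(\H_t)|}{6}\max\{\|\s_t^E\|^2,\, \nu^2|\lambda_{\min}(\H_t)|^2/\sigma_t^2\}$.

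First I would normalize the setup. Both $\|\s_t^E\|$ and $m_t(\s_t^E)$ are insensitive to rescaling $\u_t$ (this only reparametrizes $\widehat\alpha$), and the curvature hypothesis $\lin{\u_t, \H_t\u_t} \le \nu\lambda_{\min}(\H_t)\|\u_t\|^2$ is scale invariant, so I may assume $\|\u_t\| = 1$. Since the inner minimization ranges over all $\widehat\alpha \in \bbR$, replacing $\u_t$ by $-\u_t$ leaves both the optimal point and the optimal value unchanged, so I may also assume $g \triangleq \lin{\nabla F(\x_t), \u_t} \le 0$; this forces the line-minimizer $\alpha^*$ to satisfy $\alpha^* > 0$ (the cubic is coercive, so the minimizer exists, and $\phi'(0)=g\le 0$ with $c<0$ pushes it to the positive axis), whence $\|\s_t^E\| = \alpha^*$. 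Writing $c \triangleq \lin{\u_t, \H_t\u_t}$, the curvature condition gives $|c| \ge \nu|\lambda_{\min}(\H_t)| > 0$, and the objective along the ray is $\phi(\alpha) = \alpha g + \tfrac12 c\alpha^2 + \tfrac{\sigma_t}{3}\alpha^3$.

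For the second entry of the maximum I would avoid optimality entirely: since $\alpha^*$ is a global minimizer, $-m_t(\s_t^E) = -\phi(\alpha^*) \ge -\phi(\beta)$ for every $\beta \ge 0$, and dropping the nonnegative term $-\beta g$ yields $-\phi(\beta) \ge \tfrac12|c|\beta^2 - \tfrac{\sigma_t}{3}\beta^3$. Maximizing the right-hand side over $\beta$ at $\beta = |c|/\sigma_t$ gives $|c|^3/(6\sigma_t^2)$, and $|c| \ge \nu|\lambda_{\min}(\H_t)|$ then produces exactly $\tfrac{\nu|\lambda_{\min}(\H_t)|}{6}\cdot \tfrac{\nu^2|\lambda_{\min}(\H_t)|^2}{\sigma_t^2}$. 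For the first entry I would use stationarity $\phi'(\alpha^*) = g + c\alpha^* + \sigma_t\alpha^{*2} = 0$; substituting $g = |c|\alpha^* - \sigma_t\alpha^{*2}$ into $-\phi(\alpha^*)$ collapses it to the closed form $-\phi(\alpha^*) = \tfrac23\sigma_t\alpha^{*3} - \tfrac12|c|\alpha^{*2} = \alpha^{*2}\!\left(\tfrac23\sigma_t\alpha^* - \tfrac12|c|\right)$. The assumption $g \le 0$ forces $\alpha^* \ge |c|/\sigma_t$, so $\tfrac23\sigma_t\alpha^* - \tfrac12|c| \ge \tfrac16|c|$, giving $-\phi(\alpha^*) \ge \tfrac16|c|\alpha^{*2} = \tfrac16|c|\,\|\s_t^E\|^2 \ge \tfrac{\nu|\lambda_{\min}(\H_t)|}{6}\|\s_t^E\|^2$. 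Taking the maximum of the two bounds finishes the proof.

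The cleanest part is the curvature budget, which needs only evaluation at a single convenient point. The more delicate bookkeeping is in the first entry: one must (i) derive the correct closed form for the optimal value after eliminating $g$ via stationarity, and (ii) justify $\alpha^* \ge |c|/\sigma_t$ from $g \le 0$ so that the cubic term dominates the negative quadratic term and the coefficient $\tfrac16$ emerges. The only genuine care is ensuring that the scaling $\|\u_t\|=1$ and the sign flip $\u_t \mapsto -\u_t$ do not alter $\|\s_t^E\|$ or $m_t(\s_t^E)$; once the normalization $g \le 0$ is in place, the remainder is elementary single-variable calculus.
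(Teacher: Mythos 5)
Your proof is correct, and it takes a somewhat more self-contained route than the paper's. The paper's argument runs through three ingredients: the vector form of the line stationarity condition $\lin{\nabla F(\x_t),\s_t^E}+\lin{\s_t^E,\H_t\s_t^E}+\sigma_t\|\s_t^E\|^3=0$; the symmetry comparison $m_t(\alpha\u_t)\le m_t(-\alpha\u_t)$ to conclude $\lin{\nabla F(\x_t),\s_t^E}\le 0$ (your WLOG reduction to $g\le 0$ is exactly this step, performed on $\u_t$ instead of $\s_t^E$); and an appeal to Lemma~2.1 of \cite{cartis2012complexity} for $-m_t(\s_t^E)\ge \sigma_t\|\s_t^E\|^3/6$. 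It then extracts the single inequality $\sigma_t\|\s_t^E\|\ge\nu|\lambda_{\min}(\H_t)|$ from stationarity and the sign condition, and obtains both entries of the maximum by multiplying, respectively cubing, that inequality. You replace the citation with an explicit closed form: eliminating $g$ via $\phi'(\alpha^*)=0$ gives $-\phi(\alpha^*)=\tfrac23\sigma_t\alpha^{*3}-\tfrac12|c|\alpha^{*2}$, and $\alpha^*\ge|c|/\sigma_t$ (your analogue of the paper's key inequality \eqref{eq:norm_eig_direction}) then yields the $\tfrac16|c|\,\|\s_t^E\|^2$ bound — in effect you re-derive the cited lemma inline for this one-dimensional case. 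Your second-entry argument is genuinely different and arguably cleaner: comparing the minimum against the test point $\beta=|c|/\sigma_t$ needs only global optimality along the line and the sign of $g$, not stationarity at all. What the paper's version buys is brevity and the explicit display of $\sigma_t\|\s_t^E\|\ge\nu|\lambda_{\min}(\H_t)|$ as a standalone inequality, which is reused later (in Lemma \ref{lemma:SARC_fun_best_epsilon}); what yours buys is a fully elementary, citation-free proof whose bookkeeping (the WLOG normalizations, the coercivity of the one-dimensional cubic, and the positivity of $\alpha^*$) is all verified correctly.
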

\begin{proof}
By the first-order necessary optimality condition of $ \alpha $, we get $\lin{\nabla F(\x_{t}), \u_{t}} + \alpha \lin{\u_{t}, \H_{t} \u_{t}} + \sigma_{t} \alpha^{2} \|\u_{t}\|^{3} = 0$, which implies $ \lin{\nabla F(\x_{t}), \s_{t}^{E}} + \lin{\s_{t}^{E}, \H_{t} \s_{t}^{E}} + \sigma_{t} \|\s_{t}^{E}\|^{3} = 0 $. Next, since $ \alpha $ is a minimizer of $ m_{t}(\widehat{\alpha} \u_{t}) $, we have $ m_{t}(\alpha \u_{t}) \leq m_{t}(-\alpha \u_{t}) $, which implies $ \lin{\nabla F(\x_{t}), \s_{t}^{E}} \leq 0 $. Hence, we also obtain $ \lin{\s_{t}^{E}, \H_{t} \s_{t}^{E}} + \sigma_{t} \|\s_{t}^{E}\|^{3} \geq 0$.
%\begin{align*}
%\lin{\nabla F(\x_{t}), \s_{t}^{E}} + \lin{\s_{t}^{E}, \H_{t} \s_{t}^{E}} + \sigma_{t} \|\s_{t}^{E}\|^{3} &= 0, \\
%\lin{\s_{t}^{E}, \H_{t} \s_{t}^{E}} + \sigma_{t} \|\s_{t}^{E}\|^{3} &\geq 0.
%\end{align*}
%It immediately follows that $ \lin{\nabla F(\x_{t}),\s_{t}^{E}} \leq 0 $. 
From \cite[Lemma 2.1]{cartis2012complexity}, we get 
$- m_{t}(s_{t}^{E}) \geq \sigma_{t} \|\s_t^{E}\|^{3}/6 = \left(- \lin{\nabla F(\x_{t}), \s_{t}^{E}} - \lin{\s_{t}^{E}, \H_{t} \s_{t}^{E}}\right)/6 \geq \nu |\lambda_{\min}(\H_{t})| \|\s_t^{E}\|^{2}/6$.
Now, we have
\begin{align}
\label{eq:norm_eig_direction}
\sigma_{t} \|\s_{t}^{E}\| \geq -\frac{\lin{\s_{t}^{E}, \H_{t} \s_{t}^{E}} }{\|\s_{t}^{E}\|^{2}} \geq \nu |\lambda_{\min}(\H_{t})|,
\end{align}
which gives $\sigma_{t} \|\s_{t}^{E}\|^{3} \geq \nu |\lambda_{\min}(\H_{t})| \|\s_{t}^{E}\|^{2}$ and $\sigma_{t} \|\s_{t}^{E}\|^{3} \geq \nu^{3}\sigma_{t}^{-2}|\lambda_{\min}(\H_{t})|^{3}$.
%and
%\begin{align*}
%\sigma_{t} \|\s_{t}^{E}\|^{3} \geq \frac{\nu^{3}}{\sigma_{t}^{2}}|\lambda_{\min}(\H_{t})|^{3}.
%\end{align*}
Hence, we have $ -m_{t}(\s_{t}^{E}) \geq \sigma_{t} \|\s_t^{E}\|^{3}/6 \geq \nu |\lambda_{\min}(\H_{t})| \|\s_{t}^{E}\|^{2}/6 $ and $ -m_{t}(\s_{t}^{E}) \geq  \sigma_{t} \|\s_t^{E}\|^{3}/6 \geq \nu^{3}\sigma_{t}^{-2}|\lambda_{\min}(\H_{t})|^{3}/6 $.
%\begin{align*}
%-m_{t}(\s_{t}^{E}) &\geq \frac{1}{6} \sigma_{t} \|\s_t\|^{3} \geq \frac{1}{6}\nu |\lambda_{\min}(\H_{t})| \|\s_{t}^{E}\|^{2},
%\\
%-m_{t}(\s_{t}^{E}) &\geq \frac{1}{6} \sigma_{t} \|\s_t\|^{3} \geq \frac{1}{6}\nu^{3}\sigma_{t}^{-2}|\lambda_{\min}(\H_{t})|^{3}.
%\end{align*}
%Adding the last two inequalities, we get
%\begin{align*}
%-m_{t}(\s_{t}^{E}) \geq \frac{1}{12}\nu |\lambda_{\min}(\H_{t})| \left( \|\s_{t}^{E}\|^{2} + \frac{\nu^{2}}{\sigma_{t}^{2}}|\lambda_{\min}(\H_{t})|^{2}\right).
%\end{align*}
\qed
\end{proof}

The next lemma is used to show sufficient decrease in the objective function using the approximate solution of the sub-problem \eqref{eq:SARC_subp}. 
% -------------------- % function decrease % -------------------- %
\begin{lemma}
	\label{lemma:SARC_fundecrease}
	Given Assumption \ref{assumption:a1} and Condition \ref{condition:Hessian_approximation_H}, we have
	\begin{align*}%\label{eq:SARC_fundec}
	F(\x_t + \s_t) - F(\x_t) - m_t(\s_t)  \le \left(\frac{L}{2} - \frac{\sigma_{t}}{3} \right) \|\s_t\|^3 +  \frac{\epsilon}{2}\|\s_t\|^2.
	\end{align*}
\end{lemma}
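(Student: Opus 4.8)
The plan is to mirror the argument already used for the trust-region case in Lemma \ref{lemma:fundecrease}, adapting only for the extra cubic term in the ARC model \eqref{eq:SARC_subp}. First I would invoke the second-order Mean Value expansion of $F$ at $\x_t$ along the step $\s_t$, writing $F(\x_t + \s_t) = F(\x_t) + \nabla F(\x_t)^T \s_t + \frac{1}{2}\s_t^T \nabla^2 F(\xi_t)\s_t$ for some $\xi_t$ on the segment $[\x_t, \x_t + \s_t]$. Since $\xi_t$ lies on this segment, we automatically have $\norm{\xi_t - \x_t} \le \norm{\s_t}$, which is what makes the Lipschitz bound in Assumption \ref{assumption:a1} applicable.

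Next I would subtract $m_t(\s_t) = \lin{\nabla F(\x_t),\s_t} + \frac{1}{2}\lin{\s_t,\H_t\s_t} + \frac{\sigma_t}{3}\norm{\s_t}^3$. The linear terms cancel exactly, and the cubic regularization term peels off cleanly, leaving
\begin{align*}
F(\x_t + \s_t) - F(\x_t) - m_t(\s_t) = \tfrac{1}{2}\s_t^T\left(\nabla^2 F(\xi_t) - \H_t\right)\s_t - \tfrac{\sigma_t}{3}\norm{\s_t}^3.
\end{align*}
The key observation is that the target right-hand side $\left(\frac{L}{2} - \frac{\sigma_t}{3}\right)\norm{\s_t}^3 + \frac{\epsilon}{2}\norm{\s_t}^2$ already carries the same $-\frac{\sigma_t}{3}\norm{\s_t}^3$ contribution, so that term matches on both sides and it suffices to bound the quadratic-form discrepancy $\frac{1}{2}\s_t^T(\nabla^2 F(\xi_t) - \H_t)\s_t$ by $\frac{L}{2}\norm{\s_t}^3 + \frac{\epsilon}{2}\norm{\s_t}^2$. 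This reduction is precisely the quantity controlled in Lemma \ref{lemma:fundecrease}.

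Finally I would insert and subtract $\nabla^2 F(\x_t)$ inside the quadratic form and apply the triangle inequality: the term $\frac{1}{2}\Abs{\s_t^T(\nabla^2 F(\xi_t) - \nabla^2 F(\x_t))\s_t}$ is bounded by $\frac{L}{2}\norm{\s_t}^3$ using Lipschitz continuity \eqref{eq:Hessian_Lipschitz_F} together with $\norm{\xi_t - \x_t}\le\norm{\s_t}$, while $\frac{1}{2}\Abs{\s_t^T(\nabla^2 F(\x_t) - \H_t)\s_t}$ is bounded by $\frac{\epsilon}{2}\norm{\s_t}^2$ via the accuracy condition \eqref{eq:Hessian_accuracy_H} (with tolerance fixed to $\epsilon$, as stipulated for the ARC variant). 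I do not anticipate a genuine obstacle here, since the argument is a direct transcription of the trust-region computation; the single point requiring care is that, unlike Lemma \ref{lemma:fundecrease} which states a two-sided $\Abs{\cdot}$ estimate, here we want only the one-sided upper bound so that the negative cubic term $-\frac{\sigma_t}{3}\norm{\s_t}^3$ is retained (and cancelled against the target) rather than discarded through a triangle inequality.
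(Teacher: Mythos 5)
Your proposal is correct and follows essentially the same route as the paper's own proof: Mean Value expansion, exact cancellation of the linear terms, peeling off the $-\frac{\sigma_t}{3}\|\s_t\|^3$ term, then inserting $\nabla^2 F(\x_t)$ and bounding the two resulting quadratic forms via \eqref{eq:Hessian_Lipschitz_F} and \eqref{eq:Hessian_accuracy_H}. Your closing remark about needing only the one-sided bound (so the negative cubic term is kept rather than absorbed by an absolute value) is exactly the distinction between this lemma and Lemma \ref{lemma:fundecrease}, and the paper's proof handles it the same way.
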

\begin{proof}
	Apply Mean Value Theorem on $F$ at $\x_t$ gives $F(\x_t + \s_t) = F(\x_t) + \nabla F(\x_t)^T \s_t + \frac{1}{2}\s_t^T \nabla^2F(\xi_t)\s_t$, for some $\xi_t$ in the segment of $[\x_t, \x_t + \s_t]$. Now, it follows that
	\begin{align*}
	&F(\x_t + \s_t) - F(\x_t) - m_t(\s_t) = \frac{1}{2} \s_t^T(\nabla^2 F(\xi_t) - \H_t)\s_t - \frac{\sigma_{t}}{3} \|\s_t\|^{3}
	\\ &= \frac{1}{2}\s_t^T(\nabla^2 F(\xi_t) - \nabla^2 F(\x_t) + \nabla^2 F(\x_t) - \H_t)\s_t - \frac{\sigma_{t}}{3} \|\s_t\|^{3}
	\\ &\le \frac{1}{2}\s_t^T(\nabla^2 F(\xi_t) - \nabla^2 F(\x_t))\s_t + \frac{1}{2}\s_t^T(\nabla^2 F(\x_t) - \H_t)\s_t - \frac{\sigma_{t}}{3} \|\s_t\|^{3}
	\\ &\le \frac{L}{2} \|\s_t\|^3 +  \frac{1}{2}\epsilon\|\s_t\|^2 - \frac{\sigma_{t}}{3} \|\s_t\|^{3} \le \left(\frac{L}{2} - \frac{\sigma_{t}}{3} \right) \|\s_t\|^3 +  \frac{\epsilon}{2}\|\s_t\|^2.
	\end{align*}
\qed
\end{proof}

% -------------------- % success % -------------------- %
\begin{lemma}
	\label{lemma:SARC_fun_best_epsilon}
	Given Assumption \ref{assumption:a1}, Conditions \ref{condition:Hessian_approximation_H} and \ref{condition:SARC_sufficient_descent}, suppose 
	\begin{align*}
	\sigma_t \geq 2 L, ~~~
	\epsilon \leq \min \left\{ \frac{1}{12} \left(\sqrt{K_{H}^{2} + 8 L \epsilon_{g}} - K_{H}\right), \frac{\nu \epsilon_{H}}{6\gamma} \right\}.
	\end{align*}
	Then, we have
	\begin{align*}
%	\label{eq:SARC_norm_s_inequality}
	\left(\frac{L}{2}- \frac{\sigma_{t}}{3} \right) \|\s_t\|^3 +  \frac{\epsilon}{2}\|\s_t\|^2 \leq \left\{
	\begin{array}{ll}
	\frac{\epsilon}{2} \|\s_{t}^{C}\|^2, \\ \\
	\frac{\epsilon}{2} \|\s_{t}^{E}\|^2, \quad \text{If} \;\; \lmin (\H_{t}) \geq - \epsilon_{H}\\
	\end{array}
	\right..
	\end{align*}
%	where $ \s_{t}^{C} $ and $ \s_{t}^{E} $ are defined as in Lemmas \ref{lemma:SARC_Cauchy} and \ref{lemma:SARC_Eigen}, respectively.
\end{lemma}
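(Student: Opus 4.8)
The plan is to exploit the hypothesis $\sigma_t \ge 2L$, which forces the cubic coefficient $\frac{L}{2} - \frac{\sigma_t}{3} \le -\frac{L}{6}$ to be strictly negative. Writing $a \triangleq \frac{\sigma_t}{3} - \frac{L}{2} > 0$, the left-hand side is the value at $s = \norm{\s_t}$ of the scalar function $g(s) \triangleq -a s^3 + \frac{\epsilon}{2}s^2$, a downward cubic on $s \ge 0$. Since we have no a priori control over the actual step length $\norm{\s_t}$, the crucial move is to discard it and bound $g(\norm{\s_t})$ by the unconstrained maximum $\max_{s \ge 0} g(s)$. A one-line calculation ($g'(s) = 0$) gives the maximizer $s^\star = \frac{\epsilon}{3a}$ and the maximal value $g(s^\star) = \frac{\epsilon^3}{54 a^2}$. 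Thus it suffices to prove $\frac{\epsilon^3}{54 a^2} \le \frac{\epsilon}{2}\norm{\s_t^C}^2$ and, in the negative-curvature regime, $\frac{\epsilon^3}{54 a^2} \le \frac{\epsilon}{2}\norm{\s_t^E}^2$; equivalently, everything reduces to the two lower bounds $\norm{\s_t^C} \ge \frac{\epsilon}{3\sqrt{3}\,a}$ and $\norm{\s_t^E} \ge \frac{\epsilon}{3\sqrt{3}\,a}$.

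To make these tractable I would first record the elementary consequence $a \ge \frac{\sigma_t}{12}$ of $\sigma_t \ge 2L$ (since then $\frac{L}{2} \le \frac{\sigma_t}{4}$), so that $\frac{\epsilon}{3\sqrt{3}\,a} \le \frac{4\epsilon}{\sqrt{3}\,\sigma_t}$ and both targets become statements that $\norm{\s_t^C}, \norm{\s_t^E}$ are $\Omega(\epsilon/\sigma_t)$. For the Cauchy point I would invoke the explicit lower bound \eqref{eq:norm_cauchy_direction} from Lemma \ref{lemma:SARC_Cauchy}, namely $\norm{\s_t^C} \ge \frac{1}{2\sigma_t}\big(\sqrt{K_H^2 + 4\sigma_t\norm{\nabla F(\x_t)}} - K_H\big)$. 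In the regime where the gradient is still large, $\norm{\nabla F(\x_t)} > \epsilon_g$, this together with $\sigma_t \ge 2L$ yields $4\sigma_t\norm{\nabla F(\x_t)} \ge 8L\epsilon_g$, and the standing hypothesis $\epsilon \le \frac{1}{12}(\sqrt{K_H^2 + 8L\epsilon_g} - K_H)$ gives $\norm{\s_t^C} \ge \frac{6\epsilon}{\sigma_t} \ge \frac{4\epsilon}{\sqrt{3}\,\sigma_t}$, which is exactly what is needed.

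For the curvature case I would use \eqref{eq:norm_eig_direction} from Lemma \ref{lemma:SARC_Eigen}, i.e. $\sigma_t\norm{\s_t^E} \ge \nu\Abs{\lmin(\H_t)}$. In the regime of significant negative curvature, $\lmin(\H_t) < -\epsilon_H$ (I believe the eigenvalue inequality printed in the statement should read ``$<-\epsilon_H$'' rather than ``$\ge -\epsilon_H$'', as otherwise the Eigen direction carries no useful descent), this gives $\norm{\s_t^E} \ge \frac{\nu\epsilon_H}{\sigma_t}$; the hypothesis $\epsilon \le \frac{\nu\epsilon_H}{6\gamma}$ with $\gamma > 1$ then comfortably yields $\frac{\nu\epsilon_H}{\sigma_t} \ge \frac{4\epsilon}{\sqrt{3}\,\sigma_t}$, closing the second case. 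The only genuinely delicate point is the opening maneuver: one must resist bounding $g(\norm{\s_t}) \le \frac{\epsilon}{2}\norm{\s_t}^2$ and then comparing $\norm{\s_t}$ with $\norm{\s_t^C}$ or $\norm{\s_t^E}$ (neither of which need dominate it), and instead pass to the step-independent maximum $\frac{\epsilon^3}{54 a^2}$; after that, the argument is just plugging the two direction-norm bounds \eqref{eq:norm_cauchy_direction} and \eqref{eq:norm_eig_direction} into the required constant comparisons.
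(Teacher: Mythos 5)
Your proof is correct, but it takes a genuinely different route from the paper's. The paper argues by a two-case split: when $\norm{\s_t} \le \norm{\s_t^C}$ (resp.\ $\norm{\s_t^E}$) it simply drops the negative cubic term and uses monotonicity, and when $\norm{\s_t} \ge \norm{\s_t^C}$ it writes $\left(\frac{L}{2}-\frac{\sigma_t}{3}\right)\norm{\s_t}^3 + \frac{\epsilon}{2}\norm{\s_t}^2 \le \left(-\frac{\sigma_t}{12}\norm{\s_t^C} + \frac{\epsilon}{2}\right)\norm{\s_t}^2$ and shows the bracket is nonpositive using \eqref{eq:norm_cauchy_direction} (resp.\ \eqref{eq:norm_eig_direction}) together with the hypothesis on $\epsilon$ — so the pitfall you warn against (comparing $\norm{\s_t}$ with $\norm{\s_t^C}$ without knowing which dominates) is handled in the paper by case analysis rather than avoided. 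Your global-maximization step replaces that case split with the step-independent bound $g(\norm{\s_t}) \le \max_{s\ge 0} g(s) = \frac{\epsilon^3}{54 a^2}$, after which both branches reduce to the same two norm lower bounds $\norm{\s_t^C}, \norm{\s_t^E} \ge \frac{6\epsilon}{\sigma_t}$ that the paper also extracts from \eqref{eq:norm_cauchy_direction} and \eqref{eq:norm_eig_direction}; your constants ($a \ge \sigma_t/12$, threshold $\frac{4\epsilon}{\sqrt{3}\,\sigma_t}$, and $6 > 4/\sqrt{3}$) all check out. What your version buys is a unified, calculation-driven argument with an explicit quantitative cap $\frac{8\epsilon^3}{3\sigma_t^2}$ on the left-hand side; what the paper's buys is the slightly stronger conclusion that the left-hand side is in fact $\le 0$ whenever the step is long. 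Two remarks you make deserve confirmation: (i) the condition printed in the lemma as $\lmin(\H_t) \ge -\epsilon_H$ is indeed a typo for $\lmin(\H_t) \le -\epsilon_H$ — the paper's own proof uses $\sigma_t\norm{\s_t^E} \ge \nu\Abs{\lmin(\H_t)} \ge \nu\epsilon_H$, and Lemma \ref{lemma:SARC_model_hess} invokes this bound precisely when $\lmin(\H_t) < -\epsilon_H$; and (ii) the Cauchy branch needs $\norm{\nabla F(\x_t)} \ge \epsilon_g$, an hypothesis absent from the lemma statement but implicitly used in the paper's proof as well (in passing from $4\sigma_t\norm{\nabla F(\x_t)}$ to $8L\epsilon_g$), and satisfied in the only place the Cauchy branch is applied, namely Lemma \ref{lemma:SARC_model_grad}. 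So both of these are defects of the statement shared by the paper's own proof, not gaps in yours.
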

\begin{proof}
    First consider $  \|\s_{t}^{C}\| $ for which we have two cases.
	\begin{enumerate}
		\item If $ \|\s_{t}\| \leq \|\s_{t}^{C}\| $, then from assumption on $ \sigma_{t} $, it immediately follows that
		\begin{align*}
		\left(\frac{L }{2}- \frac{\sigma_{t}}{3} \right) \|\s_t\|^3 +  \frac{\epsilon}{2}\|\s_t\|^2 \leq \frac{\epsilon}{2}\|\s_t\|^2 \leq \frac{\epsilon}{2}\|\s_{t}^{C}\|^2 .
		\end{align*} 
		\item If $ \|\s_{t}\| \geq \|\s_{t}^{C}\| $, since $L \le \sigma_t/2$, then
		\begin{align*}
		\left(\frac{L }{2}- \frac{\sigma_{t}}{3} \right) \|\s_t\|^3 +  \frac{\epsilon}{2}\|\s_t\|^2 &\leq  -\frac{\sigma_t}{12}\norm{\s_t}^3 + \frac{\epsilon}{2}\norm{\s_t}^2  \le \left( -\frac{\sigma_t}{12}\norm{\s_t^C} + \frac{\epsilon}{2}\right) \norm{\s_t}^2
		\\ &\le \left( - \frac{\sqrt{K_H^2 + 8L\epsilon_g}-K_H}{24} + \frac{\epsilon}{2}\right)\norm{\s_t}^2 \le 0 \le \frac{\epsilon}{2}\norm{\s_t^C}^2.
		\end{align*}
The second last inequality follows from \eqref{eq:norm_cauchy_direction}.
    \end{enumerate}
    Similarly, for $ \|\s_{t}^{E}\| $, we have two cases.
	\begin{enumerate}
		\item If $ \|\s_{t}\| \leq \|\s_{t}^{E}\| $, then from assumption on $ \sigma_{t} $, it immediately follows that
		\begin{align*}
		\left(\frac{L}{2} - \frac{\sigma_{t}}{3} \right) \|\s_t\|^3 +  \frac{\epsilon}{2}\|\s_t\|^2 \leq \frac{\epsilon}{2}\|\s_t\|^2 \leq \frac{\epsilon}{2}\|\s_{t}^{E}\|^2 .
		\end{align*} 
		\item If $ \|\s_{t}\| \geq \|\s_{t}^{E}\| $, since $L \le \sigma_t/2$, then
		\begin{align*}
		\left(\frac{L }{2}- \frac{\sigma_{t}}{3} \right) \|\s_t\|^3 +  \frac{\epsilon}{2}\|\s_t\|^2 &\leq -\frac{\sigma_t}{12}\norm{\s_t}^3 + \frac{\epsilon}{2}\norm{\s_t}^2 \le -\frac{\sigma_t}{12}\norm{\s_t^E}\norm{\s_t}^2  + \frac{\epsilon}{2}\norm{\s_t}^2
		\\ & \le - \frac{\nu\epsilon_H}{12}\norm{\s_t}^2 + \frac{\epsilon}{2}\norm{\s_t}^2 \le 0 < \frac{\epsilon}{2}\norm{\s_t^E}^2.
		\end{align*}
The second last inequality follows from \eqref{eq:norm_eig_direction} and the last line follows from $\epsilon \le \frac{\nu\epsilon_H}{6}$. \qed
    \end{enumerate}
\end{proof}
% -------------------- % eigen direction % -------------------- %
\begin{lemma}%{lemma}{lemmodel_hess}
	\label{lemma:SARC_model_hess}
	Given Assumption \ref{assumption:a1}, Conditions \ref{condition:Hessian_approximation_H} and \ref{condition:SARC_sufficient_descent}, suppose at the t-th iteration, $\lmin(\H_t) <-\epsilon_{H}$, $\sigma_t \geq 2 L$, and $\epsilon \leq \min\{1/6, (1-\eta)/3\}\nu\epsilon_{H}$. Then, the t-th iteration is {\it successful}, i.e. $\sigma_{t+1} = \sigma_{t}/\gamma$.
\end{lemma}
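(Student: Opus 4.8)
The plan is to show that $1-\rho_t \le 1-\eta$, which by the update rule of Algorithm~\ref{alg:SARC_fg} is exactly the statement that iteration $t$ is successful. Recalling $\rho_t = (F(\x_t)-F(\x_t+\s_t))/(-m_t(\s_t))$, I would first rewrite $1-\rho_t = (F(\x_t+\s_t)-F(\x_t)-m_t(\s_t))/(-m_t(\s_t))$, so the task reduces to bounding the numerator from above, the denominator from below, and checking that the resulting ratio does not exceed $1-\eta$.

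For the numerator, I would invoke Lemma~\ref{lemma:SARC_fundecrease} to get $F(\x_t+\s_t)-F(\x_t)-m_t(\s_t) \le (L/2-\sigma_t/3)\|\s_t\|^3 + (\epsilon/2)\|\s_t\|^2$. Since we are in the regime $\lmin(\H_t)<-\epsilon_H$, the approximate negative-curvature direction $\s_t^E$ is well defined, and I would apply the eigen-direction case of Lemma~\ref{lemma:SARC_fun_best_epsilon}: using $\sigma_t\ge 2L$ together with $\epsilon\le \nu\epsilon_H/6$ (both implied by the hypotheses, since $\min\{1/6,(1-\eta)/3\}\nu\epsilon_H \le \nu\epsilon_H/6$), the right-hand side is dominated by $(\epsilon/2)\|\s_t^E\|^2$. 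Chaining the two estimates yields $F(\x_t+\s_t)-F(\x_t)-m_t(\s_t) \le (\epsilon/2)\|\s_t^E\|^2$. The point requiring care is that this eigen-case bound relies on $\sigma_t\|\s_t^E\|\ge \nu|\lambda_{\min}(\H_t)| > \nu\epsilon_H$, which follows from \eqref{eq:norm_eig_direction} precisely because $|\lambda_{\min}(\H_t)|>\epsilon_H$ in this regime.

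For the denominator, I would use the eigen-direction sufficient-descent guarantee \eqref{eq:SARC_Eigen} of Condition~\ref{condition:SARC_sufficient_descent}, giving $-m_t(\s_t)\ge -m_t(\s_t^E) \ge (\nu|\lambda_{\min}(\H_t)|/6)\|\s_t^E\|^2 \ge (\nu\epsilon_H/6)\|\s_t^E\|^2$, again using $|\lambda_{\min}(\H_t)|>\epsilon_H$. Combining numerator and denominator, the factors $\|\s_t^E\|^2$ cancel and I obtain $1-\rho_t \le 3\epsilon/(\nu\epsilon_H)$. The hypothesis $\epsilon\le \tfrac{(1-\eta)}{3}\nu\epsilon_H$ then gives $1-\rho_t\le 1-\eta$, i.e. $\rho_t\ge\eta$, so the iteration is successful.

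I expect no genuine obstacle here: the argument is essentially an assembly of the previously established lemmas, with the main bookkeeping being to select the eigen branch (rather than the Cauchy branch) throughout, which is the correct choice precisely in the negative-curvature regime $\lmin(\H_t)<-\epsilon_H$ where $\|\s_t^E\|$ carries the descent. The only mild care needed is to match the sign conventions so that the eigen-case estimate of Lemma~\ref{lemma:SARC_fun_best_epsilon} is genuinely applicable, and to verify that the two occurrences of $\|\s_t^E\|^2$ cancel cleanly rather than requiring any further lower bound on $\|\s_t^E\|$ itself.
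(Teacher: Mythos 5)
Your proposal is correct and follows essentially the same route as the paper's proof: bound the numerator of $1-\rho_t$ via Lemma~\ref{lemma:SARC_fundecrease} and the eigen case of Lemma~\ref{lemma:SARC_fun_best_epsilon}, bound the denominator below via \eqref{eq:SARC_Eigen}, cancel the $\|\s_t^E\|^2$ factors to get $1-\rho_t \le 3\epsilon/(\nu\epsilon_H) \le 1-\eta$. Your remark about \eqref{eq:norm_eig_direction} requiring $|\lambda_{\min}(\H_t)|>\epsilon_H$ is well taken --- it is exactly the regime of this lemma (and reveals that the ``$\lmin(\H_t)\ge-\epsilon_H$'' qualifier printed in Lemma~\ref{lemma:SARC_fun_best_epsilon} is a sign typo for the eigen branch).
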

\begin{proof}
	From \eqref{eq:SARC_Eigen}, Lemma \ref{lemma:SARC_fundecrease}, Lemma \ref{lemma:SARC_fun_best_epsilon}, as well as assumptions on $ \sigma_{t} $ and $\epsilon$, we have
	\begin{align*}
	1- \rho_t &= \frac{F(\x_t + \s_t) - F(\x_t) -m_t(\s_t) }{-m_t(\s_t)}  \le \frac{\left({L}/{2} - {\sigma_{t}}/{3} \right) \|\s_t\|^3 +  \epsilon\|\s_t\|^2/2}{\nu |\lambda_{\min}(\H_{t})| \|\s^{E}_{t}\|^{2}/6} \\
	& \le \frac{3\epsilon\|\s^{E}_t\|^2}{\nu |\lambda_{\min}(\H_{t})| \|\s^{E}_{t}\|^{2}} \le  \frac{3\epsilon}{\nu\epsilon_{H}}\le  1-\eta.
	\end{align*}
	Hence, $\rho_t \ge \eta$, and the iteration is successful. \qed
\end{proof}

% -------------------- % gradient direction % -------------------- %
\begin{lemma}
	\label{lemma:SARC_model_grad}
Given Assumption \ref{assumption:a1}, Conditions \ref{condition:Hessian_approximation_H} and \ref{condition:SARC_sufficient_descent}, suppose at the t-th iteration, $\|\nabla F(\x_{t})\| \geq \epsilon_{g}$, $\sigma_t \geq 2L$, and $$\epsilon \leq \min\left\{\frac{1}{12},\frac{1 - \eta}{6}\right\} \left(\sqrt{K_{H}^{2} + 8 L \epsilon_{g}} - K_{H}\right).$$
Then, the t-th iteration is {\it successful}, i.e. $\sigma_{t+1} = \sigma_{t}/\gamma$.
\end{lemma}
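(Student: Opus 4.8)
The plan is to mirror the negative-curvature argument of Lemma~\ref{lemma:SARC_model_hess}, but now certifying the step through the Cauchy point instead of the eigen point, since the active hypothesis is $\|\nabla F(\x_{t})\| \ge \epsilon_{g}$ rather than a negative-curvature condition. As there, the goal is to show $\rho_{t} \ge \eta$, equivalently $1 - \rho_{t} \le 1 - \eta$, where $1 - \rho_{t} = (F(\x_{t} + \s_{t}) - F(\x_{t}) - m_{t}(\s_{t}))/(-m_{t}(\s_{t}))$. I would bound the numerator and denominator of this ratio separately and then chase constants.

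For the numerator, Lemma~\ref{lemma:SARC_fundecrease} gives $F(\x_{t} + \s_{t}) - F(\x_{t}) - m_{t}(\s_{t}) \le (L/2 - \sigma_{t}/3)\|\s_{t}\|^{3} + (\epsilon/2)\|\s_{t}\|^{2}$, stated in terms of the \emph{actual} step $\s_{t}$. Since $\sigma_{t} \ge 2L$ and the hypothesis supplies $\epsilon \le \tfrac{1}{12}(\sqrt{K_{H}^{2} + 8L\epsilon_{g}} - K_{H})$, I would then invoke the Cauchy branch of Lemma~\ref{lemma:SARC_fun_best_epsilon} to collapse this to the cleaner quantity $(\epsilon/2)\|\s_{t}^{C}\|^{2}$; this is the essential move, since it converts a bound in $\|\s_{t}\|$ into one in $\|\s_{t}^{C}\|$, matching the denominator. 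The point to flag is that this branch is available in precisely the present regime: its proof (Case~1 and Case~2 for $\|\s_{t}^{C}\|$) uses only $\sigma_{t} \ge 2L$, the large-gradient condition $\|\nabla F(\x_{t})\| \ge \epsilon_{g}$, and the first entry of the $\epsilon$-minimum, and in particular never invokes any negative-curvature condition. For the denominator I would use the first term of the Cauchy guarantee \eqref{eq:SARC_Cauchy}, namely $-m_{t}(\s_{t}) \ge \tfrac{1}{12}\|\s_{t}^{C}\|^{2}(\sqrt{K_{H}^{2} + 4\sigma_{t}\|\nabla F(\x_{t})\|} - K_{H})$; the common factor $\|\s_{t}^{C}\|^{2}$ then cancels, leaving $1 - \rho_{t} \le 6\epsilon/(\sqrt{K_{H}^{2} + 4\sigma_{t}\|\nabla F(\x_{t})\|} - K_{H})$.

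The final step, where essentially all the care lies, is the constant chase that closes the bound. Using that $x \mapsto \sqrt{K_{H}^{2} + x} - K_{H}$ is increasing, together with $\sigma_{t} \ge 2L$ and $\|\nabla F(\x_{t})\| \ge \epsilon_{g}$, I would replace $4\sigma_{t}\|\nabla F(\x_{t})\|$ by its iteration-uniform lower bound $8L\epsilon_{g}$ in the denominator, obtaining $1 - \rho_{t} \le 6\epsilon/(\sqrt{K_{H}^{2} + 8L\epsilon_{g}} - K_{H})$. Feeding in the second entry of the hypothesis, $\epsilon \le \tfrac{1-\eta}{6}(\sqrt{K_{H}^{2} + 8L\epsilon_{g}} - K_{H})$, yields exactly $1 - \rho_{t} \le 1 - \eta$, so $\rho_{t} \ge \eta$ and the iteration is successful. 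The main obstacle is thus bookkeeping rather than ideas: correctly routing the two pieces of the $\epsilon$-hypothesis to their two distinct roles (one to license the Cauchy branch of Lemma~\ref{lemma:SARC_fun_best_epsilon}, the other to absorb the leftover factor of $6$), and confirming that $\sqrt{K_{H}^{2} + 8L\epsilon_{g}} - K_{H}$ is indeed the correct uniform lower bound once $\sigma_{t}$ is floored at $2L$ and the gradient at $\epsilon_{g}$.
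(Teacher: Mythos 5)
Your proposal is correct and follows essentially the same route as the paper's proof: bound the numerator via Lemma~\ref{lemma:SARC_fundecrease} and the Cauchy branch of Lemma~\ref{lemma:SARC_fun_best_epsilon}, bound the denominator via the first term of \eqref{eq:SARC_Cauchy}, cancel $\|\s_t^C\|^2$, and then lower-bound $\sqrt{K_H^2 + 4\sigma_t\|\nabla F(\x_t)\|} - K_H$ by $\sqrt{K_H^2 + 8L\epsilon_g} - K_H$ using $\sigma_t \ge 2L$ and $\|\nabla F(\x_t)\| \ge \epsilon_g$ before invoking the $(1-\eta)/6$ entry of the $\epsilon$-hypothesis. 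Your side remark that the Cauchy branch of Lemma~\ref{lemma:SARC_fun_best_epsilon} is licensed by only the first entry of the $\epsilon$-minimum (together with $\sigma_t \ge 2L$ and the large-gradient condition), and not by any negative-curvature hypothesis, is an accurate and worthwhile clarification of how that lemma is being applied here.
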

\begin{proof}
First note that, from \eqref{eq:SARC_Cauchy}, we have
	\begin{align*}
	-m_{t}(\s_{t}) &\geq -m_{t}(\s_{t}^{C}) \ge \frac{1}{12} \|\s_t^{C}\|^{2} \left( \sqrt{K_{H}^{2} + 4 \sigma_{t} \|\nabla F(\x_{t})\|} - K_{H}\right).
	\end{align*}
	Hence, again, by \eqref{eq:SARC_Cauchy}, Lemma \ref{lemma:SARC_fundecrease} and \ref{lemma:SARC_fun_best_epsilon}, it follows that 
	\begin{align*}
	1- \rho_t &= \frac{F(\x_t + \s_t) - F(\x_t) -m_t(\s_t) }{-m_t(\s_t)}  \leq \frac{\left(\frac{L}{2} - \frac{\sigma_{t}}{3} \right) \|\s_t\|^3 +  \frac{\epsilon}{2}\|\s_t\|^2 }{-m_t(\s_{t}^{C})} \\
	&\le \frac{\frac{\epsilon}{2}\|\s_{t}^{C}\|^2}{ \frac{1}{12}  \|\s_t^{C}\|^{2} \left( \sqrt{K_{H}^{2} + 4 \sigma_{t} \|\nabla F(\x_{t})\|} - K_{H}\right)} \le \frac{6 \epsilon}{  \left( \sqrt{K_{H}^{2} + 4 \sigma_{t} \|\nabla F(\x_{t})\|} - K_{H}\right)} \\
	&\le \frac{6 \epsilon}{ \left( \sqrt{K_{H}^{2} + 8 L \epsilon_{g}} - K_{H}\right)} \le 1-\eta.
	\end{align*}
	Hence, $\rho_t \ge \eta$, and the iteration is successful.	\qed 
\end{proof}

Now we can upper bound the cubic regularization parameter before the algorithm terminates, as in Lemma \ref{lemma:SARC_upperbound_sigma}.
% -------------------- % upper bound sigma % -------------------- %

\begin{lemma}
	\label{lemma:SARC_upperbound_sigma}
	Consider Assumption \ref{assumption:a1}, Conditions \ref{condition:Hessian_approximation_H} and \ref{condition:SARC_sufficient_descent}, and
	\begin{align}
	\label{eq:SARC_epsilon}
	\epsilon &\leq \min \left\{ \min\left\{\frac{1}{12},\frac{1-\eta}{6}\right\} \left(\sqrt{K_{H}^{2} + 8 L \epsilon_{g}} - K_{H}\right), \min\left\{\frac{1}{6},\frac{1-\eta}{3}\right\}\nu \epsilon_{H} \right\},
	\end{align}
	where $ \nu, L, K_{H}$ are, respectively, defined as in \eqref{eq:SARC_Eigen}, \eqref{eq:Hessian_Lipschitz_F}, \eqref{eq:Hessian_boundedness_H}, and $\eta $  is a hyper-parameter of Algorithm \ref{alg:SARC_fg}. 
	For Algorithm \ref{alg:SARC_fg} we have for all $ t $,
%	\begin{align}
%	\label{eq:SARC_sigma_max}
	$\sigma_{t} \le \max\{\sigma_0, 2 \gamma L\}$.
%	\end{align}
\end{lemma}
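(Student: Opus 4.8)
The plan is to prove the bound by induction on $t$, the essential point being that once $\sigma_{t}$ is large enough the current iteration is forced to be successful, so that $\sigma_{t}$ is driven down rather than up and can never grow without bound.

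First I would isolate the driving mechanism: if the algorithm has not terminated at iteration $t$ and $\sigma_{t} \geq 2L$, then iteration $t$ is successful. Non-termination means that at least one of the two stopping criteria fails, i.e.\ either $\|\nabla F(\x_{t})\| \geq \epsilon_{g}$ or $\lmin(\H_{t}) < -\epsilon_{H}$. In the former case, the hypotheses $\sigma_{t}\geq 2L$ together with $\epsilon \leq \min\{1/12,(1-\eta)/6\}(\sqrt{K_{H}^{2}+8L\epsilon_{g}}-K_{H})$ --- which is implied by \eqref{eq:SARC_epsilon} --- are exactly those of Lemma \ref{lemma:SARC_model_grad}, so $\rho_{t}\geq\eta$. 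In the latter case, $\sigma_{t}\geq 2L$ together with $\epsilon \leq \min\{1/6,(1-\eta)/3\}\nu\epsilon_{H}$, again implied by \eqref{eq:SARC_epsilon}, are the hypotheses of Lemma \ref{lemma:SARC_model_hess}, so once more $\rho_{t}\geq\eta$. Either way the iteration is successful and hence $\sigma_{t+1}=\sigma_{t}/\gamma$.

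With this in hand the induction is routine. The base case is immediate since $\sigma_{0}\leq\max\{\sigma_{0},2\gamma L\}$. For the inductive step, assuming $\sigma_{t}\leq\max\{\sigma_{0},2\gamma L\}$, I would distinguish two regimes. If $\sigma_{t}<2L$, then even in the worst case of an unsuccessful step the update only multiplies by $\gamma$, giving $\sigma_{t+1}\leq\gamma\sigma_{t}<2\gamma L\leq\max\{\sigma_{0},2\gamma L\}$. If instead $\sigma_{t}\geq 2L$, then by the mechanism above the step is successful, so $\sigma_{t+1}=\sigma_{t}/\gamma\leq\sigma_{t}\leq\max\{\sigma_{0},2\gamma L\}$; here the mechanism is applicable because $\sigma_{t+1}$ is formed only when iteration $t$ does not trigger termination. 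In both regimes the bound propagates to $t+1$.

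I do not expect a genuine technical obstacle here. The only thing to be careful about is the bookkeeping of the two-sided multiplicative update $\sigma_{t+1}\in\{\sigma_{t}/\gamma,\ \gamma\sigma_{t}\}$ against the ``successful whenever $\sigma_{t}\geq 2L$'' trigger, and the verification that the single threshold \eqref{eq:SARC_epsilon} simultaneously meets the separate $\epsilon$-requirements of Lemmas \ref{lemma:SARC_model_hess} and \ref{lemma:SARC_model_grad}. Since \eqref{eq:SARC_epsilon} is by construction the minimum of those two requirements, this last point is automatic, and no further estimate is needed.
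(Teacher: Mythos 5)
Your proposal is correct and follows essentially the same route as the paper: both arguments hinge on Lemmas \ref{lemma:SARC_model_hess} and \ref{lemma:SARC_model_grad} forcing a successful iteration (hence $\sigma_{t+1}=\sigma_t/\gamma$) whenever $\sigma_t \geq 2L$ and the algorithm has not terminated, with \eqref{eq:SARC_epsilon} chosen precisely to satisfy both lemmas' tolerance requirements. The only difference is presentational — you run an induction with a case split on $\sigma_t \lessgtr 2L$, while the paper argues by contradiction at the first iteration violating the bound — and these are logically the same argument.
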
	

\begin{proof}
	We prove by contradiction. Assume the t-th iteration is the first unsuccessful iteration such that $\sigma_{t+1} = \gamma \sigma_{t} \ge 2\gamma  L$,
	which implies that $\sigma_{t} \ge 2L$.	However, according to Lemmas \ref{lemma:SARC_model_hess} and \ref{lemma:SARC_model_grad}, respectively, if $\lambda_{\min}(H_{t}) < -\epsilon_{H}$ or $ \|\nabla F(\x_{t})\| \geq \epsilon_{g}$, then the iteration is successful and hence we must have $ \sigma_{t+1} = \sigma_{t}/\gamma \leq \sigma_{t}$, which is a contradiction. 
\qed
\end{proof}

Now, similar to \cite[Lemma 2.8]{cartis2012complexity}, we can get the following result about the estimate of the total number of successful iterations before algorithm terminates.
\begin{lemma}[Success Iterations]
	\label{lemma:SARC_succ}
	Given Assumption \ref{assumption:a1}, Conditions \ref{condition:Hessian_approximation_H} and \ref{condition:SARC_sufficient_descent}, let $\mathcal{T}_{succ}$ denote the set of all the successful iterations before Algorithm \ref{alg:SARC_fg} stops. The number of successful iterations is upper bounded by,
	\begin{align*}
	\Abs{\mathcal{T}_{succ}} \le \frac{(F(\x_0) - F_{\min})}{\eta \kappa_{\sigma}} \cdot \max\{\epsilon_{g}^{-2}, \epsilon_{H}^{-3}\},
	\end{align*}
    where $\kappa_{\sigma}  \triangleq \min\left\{\nu^{3}/(24 \gamma^{2} L^{2}) , \min \left\{{1}/{K_{H}} , \sqrt{{1}/{(2 \gamma L)}} \right\}/(2 \sqrt{3}) \right\}$.
\end{lemma}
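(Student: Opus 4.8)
The plan is to mirror the structure of Lemma \ref{lemma:STR_succ} from the trust-region analysis: bound the per-iteration objective decrease of every successful step from below by a quantity of order $\min\{\epsilon_g^2,\epsilon_H^3\}$, and then sum over the successful iterations, exploiting that $F$ is monotonically non-increasing along the iterates and bounded below by $F_{\min}$.

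First I would fix any successful iteration $t$ at which Algorithm \ref{alg:SARC_fg} has not yet terminated. The stopping rule guarantees that at least one of $\|\nabla F(\x_t)\|\ge\epsilon_g$ or $\lmin(\H_t)<-\epsilon_H$ holds, so I split the argument into these two cases and lower-bound $-m_t(\s_t)$ in each. Throughout, the essential auxiliary fact is the uniform upper bound $\sigma_t\le\max\{\sigma_0,2\gamma L\}$ from Lemma \ref{lemma:SARC_upperbound_sigma}, which lets me replace the adaptive $\sigma_t$ by a constant; to match the stated constants I take $\sigma_0\le 2\gamma L$ (standard initialization), so that $\sigma_t\le 2\gamma L$ uniformly.

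In the gradient case, I would invoke the Cauchy bound \eqref{eq:SARC_Cauchy}, keeping only the second term of its outer maximum, namely $-m_t(\s_t)\ge \frac{\|\nabla F(\x_t)\|}{2\sqrt3}\min\{\|\nabla F(\x_t)\|/K_{H},\sqrt{\|\nabla F(\x_t)\|/\sigma_t}\}$. Substituting $\|\nabla F(\x_t)\|\ge\epsilon_g$ and $\sigma_t\le 2\gamma L$ gives a lower bound proportional to $\min\{\epsilon_g^2/K_{H},\,\epsilon_g^{3/2}/\sqrt{2\gamma L}\}$, and since $\epsilon_g\in(0,1)$ implies $\epsilon_g^{3/2}\ge\epsilon_g^2$, this is at least $\frac{1}{2\sqrt3}\min\{1/K_{H},\sqrt{1/(2\gamma L)}\}\,\epsilon_g^2$, which is exactly the second term defining $\kappa_\sigma$ times $\epsilon_g^2$. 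In the negative-curvature case I would use the Eigen bound \eqref{eq:SARC_Eigen}, keeping only the second term of its inner maximum, giving $-m_t(\s_t)\ge \nu^3|\lmin(\H_t)|^3/(6\sigma_t^2)$; substituting $|\lmin(\H_t)|>\epsilon_H$ and $\sigma_t\le 2\gamma L$ yields $\nu^3\epsilon_H^3/(24\gamma^2L^2)$, the first term defining $\kappa_\sigma$ times $\epsilon_H^3$. Taking $\kappa_\sigma$ to be the minimum of the two constants, both cases collapse to $-m_t(\s_t)\ge\kappa_\sigma\min\{\epsilon_g^2,\epsilon_H^3\}$.

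To finish, for a successful iteration $\rho_t\ge\eta$ gives $F(\x_t)-F(\x_{t+1})\ge\eta(-m_t(\s_t))\ge\eta\kappa_\sigma\min\{\epsilon_g^2,\epsilon_H^3\}$; summing over $t\in\mathcal{T}_{succ}$ and telescoping against $F(\x_0)-F_{\min}$ (rejected steps leave the iterate unchanged and contribute nothing) bounds $\Abs{\mathcal{T}_{succ}}$ by $(F(\x_0)-F_{\min})/(\eta\kappa_\sigma)\cdot\max\{\epsilon_g^{-2},\epsilon_H^{-3}\}$, as claimed. The main obstacle is entirely in the case analysis establishing the uniform model-decrease bound: one must select the correct branch of each two-part estimate in Condition \ref{condition:SARC_sufficient_descent} (the $\sqrt{\|\nabla F(\x_t)\|/\sigma_t}$ branch for the gradient and the $\nu^2|\lmin(\H_t)|^2/\sigma_t^2$ branch for the curvature) and convert the resulting half-integer power of $\epsilon_g$ into $\epsilon_g^2$ via $\epsilon_g<1$, with the boundedness of $\sigma_t$ from Lemma \ref{lemma:SARC_upperbound_sigma} supplying the uniformity. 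Everything afterward is the routine telescoping argument identical to the trust-region case.
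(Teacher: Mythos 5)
Your proposal is correct and follows essentially the same route as the paper's own proof: the same case split driven by the termination test, the same branches of the Cauchy bound \eqref{eq:SARC_Cauchy} and the Eigen bound \eqref{eq:SARC_Eigen}, the bound $\sigma_t \le 2\gamma L$ from Lemma \ref{lemma:SARC_upperbound_sigma}, and the identical telescoping argument. If anything, you are slightly more explicit than the paper about two points it glosses over, namely the conversion $\epsilon_g^{3/2} \ge \epsilon_g^2$ for $\epsilon_g \in (0,1)$ and the implicit assumption $\sigma_0 \le 2\gamma L$ needed to replace $\max\{\sigma_0, 2\gamma L\}$ by $2\gamma L$ in the stated constant $\kappa_\sigma$.
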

\begin{proof}
	Suppose Algorithm \ref{alg:SARC_fg} doesn't terminate at the t-th iteration. Then either we have $\norm{\nabla F(\x_{t})} \ge \epsilon_g$ or $\lmin(\nabla^2 \H_{t}) \le -\epsilon_{H}$. In the first case, \eqref{eq:SARC_Cauchy} and Lemma \ref{lemma:SARC_upperbound_sigma} gives
	\begin{align*}
	-m_t(\s_t) &\ge \frac{\|\nabla F(\x_{t})\|}{2 \sqrt{3} } \min \left\{\frac{\|\nabla F(\x_{t}) \|}{K_{H}} , \sqrt{\frac{\|\nabla F(\x_{t}) \|}{\sigma_{t}}} \right\} 
	%\ge \frac{4 \epsilon_{g}}{6 \sqrt{3} } \min \left\{\frac{\epsilon_{g}}{K_{H}} , \sqrt{\frac{\epsilon_{g}}{2 \gamma L}} \right\} 
	\ge \frac{ \epsilon_{g}^{2}}{2 \sqrt{3} } \min \left\{\frac{1}{K_{H}} , \sqrt{\frac{1}{2 \gamma L}} \right\}.
	\end{align*}
	Similarly, in the case where $\lmin(\nabla^2 \H_{t}) \le -\epsilon_{H}$, from \eqref{eq:SARC_Eigen} and Lemma  \ref{lemma:SARC_upperbound_sigma}, we obtain 
	$-m_{t}(\s_{t}) \geq \nu^{3}|\lambda_{\min}(\H_{t})|^{3}/(6 \sigma_{t}^{2}) \geq \nu^{3} \epsilon_{H}^{3}/(24 \gamma^{2} L^{2})$.	
	
	Since $F(\x_t)$ is monotonically decreasing, we have
	\begin{align*}
	& F(\x_0) - F_{\min} \ge \sum_{t=0}^\infty F(\x_t) - F(\x_{t+1}) \ge \sum_{t\in \mathcal{T}_{succ}} F(\x_t) - F(\x_{t+1})	
	\ge - \eta \sum_{t\in \mathcal{T}_{succ}} m_{t}(\s_{t}) 
	\\ &\ge \eta \Abs{\mathcal{T}_{succ}} \min\left\{\frac{\nu^{3} \epsilon_{H}^{3}}{24 \gamma^{2} L^{2}} ,\frac{ \epsilon_{g}^{2}}{2 \sqrt{3} } \min \left\{\frac{1}{K_{H}} , \sqrt{\frac{1}{2 \gamma L}} \right\} \right\} 
	\ge \Abs{\mathcal{T}_{succ}} \eta \kappa_{\sigma} \min\{\epsilon_g^{2}, \epsilon_H^{3}\}. \; \qed
	\end{align*}
%	Therefore we have
%	$$\Abs{\mathcal{T}_{succ}} \le \frac{(F(\x_0) - F_{\min})}{\eta \kappa_{\sigma}} \cdot \max\{\epsilon_{g}^{-2}, \epsilon_{H}^{-3}\}.$$ \qed
\end{proof}

Now we show the final complexity bounds of Algorithm \ref{alg:SARC_fg} in Theorem \ref{theorem:SARC_main_det}.
\begin{theorem}[Complexity of Algorithm \ref{alg:SARC_fg}]%{theorem}{theoremSARC}
	\label{theorem:SARC_main_det}
	Consider any $ 0< \epsilon_{g},\epsilon_{H} < 1 $. Suppose the inexact Hessian, $ \H(\x)$, satisfies Condition \ref{condition:Hessian_approximation_H} with the approximation tolerance, $ \epsilon $, in \eqref{eq:Hessian_accuracy_H} as \eqref{eq:SARC_epsilon}.
%	\begin{align}
%	\label{eq:SARC_epsilon}
%	\epsilon \leq \min \left\{ \min\left\{\frac{1}{12},\frac{1-\eta}{6}\right\} \left(\sqrt{K_{H}^{2} + 8 L \epsilon_{g}} - K_{H}\right), \min\left\{\frac{1}{6},\frac{1-\eta}{3}\right\}\nu \epsilon_{H} \right\},
%	\end{align}
%	 where $ \nu, L, K_{H}$ are, respectively, defined as in \eqref{eq:SARC_Eigen}, \eqref{eq:Hessian_Lipschitz_F}, \eqref{eq:Hessian_boundedness_H}, and $\gamma, \eta $  are the hyper-parameters of Algorithm \ref{alg:SARC_fg}. 
	 For Problem \eqref{eq:obj}, under Assumption \ref{assumption:a1} and Condition \ref{condition:SARC_sufficient_descent}, 
	 %if the approximate solution to the sub-problem \eqref{eq:SARC_subp} satisfies Condition \ref{condition:SARC_sufficient_descent}, 
	  Algorithm \ref{alg:SARC_fg} terminates after  at most 
%	 \begin{align*}
	 $T \in \mathcal O\left(\max\{\epsilon_g^{-2}, \epsilon_H^{-3}\}\right)$
%	 \end{align*}
iterations.
\end{theorem}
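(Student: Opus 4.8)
The plan is to mirror the argument used for Theorem~\ref{theorem:STR_main_det} in the trust-region case, now exploiting the multiplicative dynamics of the cubic regularization parameter $\sigma_t$ in place of the trust-region radius. Suppose Algorithm~\ref{alg:SARC_fg} terminates at iteration $T$, and partition the iterations $\{0,\dots,T-1\}$ into the set $\mathcal{T}_{succ}$ of successful iterations and the set $\mathcal{T}_{fail}$ of unsuccessful ones, so that $T = \Abs{\mathcal{T}_{succ}} + \Abs{\mathcal{T}_{fail}}$. The whole task then reduces to bounding each of these two cardinalities by $\mathcal{O}(\max\{\epsilon_g^{-2},\epsilon_H^{-3}\})$.

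The count $\Abs{\mathcal{T}_{succ}}$ is already controlled by Lemma~\ref{lemma:SARC_succ}, which gives $\Abs{\mathcal{T}_{succ}} \le (F(\x_0)-F_{\min})\max\{\epsilon_g^{-2},\epsilon_H^{-3}\}/(\eta\kappa_\sigma)$, so it remains to bound $\Abs{\mathcal{T}_{fail}}$ in terms of $\Abs{\mathcal{T}_{succ}}$. For this I would track $\sigma_t$ through the update rule of Algorithm~\ref{alg:SARC_fg}: each successful step divides $\sigma_t$ by $\gamma$ and each unsuccessful step multiplies it by $\gamma$, so that $\sigma_T = \sigma_0\,\gamma^{\Abs{\mathcal{T}_{fail}} - \Abs{\mathcal{T}_{succ}}}$. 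Invoking the uniform upper bound $\sigma_T \le \max\{\sigma_0, 2\gamma L\}$ from Lemma~\ref{lemma:SARC_upperbound_sigma} and taking logarithms yields
\begin{align*}
\Abs{\mathcal{T}_{fail}} \le \Abs{\mathcal{T}_{succ}} + \frac{1}{\log\gamma}\log\left(\frac{\max\{\sigma_0,2\gamma L\}}{\sigma_0}\right).
\end{align*}
The additive term is a constant independent of $\epsilon_g$ and $\epsilon_H$, hence $T \le 2\Abs{\mathcal{T}_{succ}} + \mathcal{O}(1)$, and substituting the bound from Lemma~\ref{lemma:SARC_succ} gives the claimed $T \in \mathcal{O}(\max\{\epsilon_g^{-2},\epsilon_H^{-3}\})$.

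Since the statement follows so directly from Lemmas~\ref{lemma:SARC_succ} and~\ref{lemma:SARC_upperbound_sigma}, the genuine difficulty has already been discharged in those lemmas rather than in the theorem itself. The crux is the uniform bound on $\sigma_t$: it hinges on showing that once $\sigma_t \ge 2L$ every iteration with either $\|\nabla F(\x_t)\| \ge \epsilon_g$ or $\lambda_{\min}(\H_t) < -\epsilon_H$ must succeed, which in turn relies on the sharpened Cauchy and Eigen descent estimates of Lemmas~\ref{lemma:SARC_Cauchy} and~\ref{lemma:SARC_Eigen} together with the model--objective discrepancy of Lemma~\ref{lemma:SARC_fundecrease}. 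The delicate point throughout is that the inexact Hessian enters only through the relaxed condition \eqref{eq:Hessian_accuracy_H}, so the tolerance $\epsilon$ must be fixed a priori as in \eqref{eq:SARC_epsilon} --- small enough that the $\tfrac{\epsilon}{2}\|\s_t\|^2$ error term is dominated by the guaranteed model decrease (via Lemma~\ref{lemma:SARC_fun_best_epsilon}), yet not requiring the a posteriori knowledge of $\|\s_t\|$ that condition \eqref{eq:Hessian_accuracy_H_quadratic_SARC} would demand. Given those lemmas, the theorem's proof is essentially the bookkeeping sketched above.
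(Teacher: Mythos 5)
Your proposal is correct and follows essentially the same argument as the paper: both decompose $T = \Abs{\mathcal{T}_{succ}} + \Abs{\mathcal{T}_{fail}}$, bound $\Abs{\mathcal{T}_{succ}}$ via Lemma~\ref{lemma:SARC_succ}, and control $\Abs{\mathcal{T}_{fail}}$ through the identity $\sigma_T = \sigma_0\gamma^{\Abs{\mathcal{T}_{fail}}-\Abs{\mathcal{T}_{succ}}}$ combined with the upper bound on $\sigma_T$ from Lemma~\ref{lemma:SARC_upperbound_sigma}. Your use of $\max\{\sigma_0, 2\gamma L\}$ is in fact slightly more careful than the paper's statement $\sigma_T \le 2\gamma L$, but this changes nothing beyond constants.
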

\begin{proof}
	Suppose Algorithm \ref{alg:SARC_fg} terminates at the t-th iteration.
	Let $\mathcal{T}_{succ}$ and $\mathcal{T}_{fail}$ denote the sets of all the successful and unsuccessful iterations, respectively. Then $T = \Abs{\mathcal{T}_{succ}} + \Abs{\mathcal{T}_{fail}}$ and $\sigma_T = \sigma_0 \gamma^{\Abs{\mathcal{T}_{fail}} - \Abs{\mathcal{T}_{succ}}}$. From Lemma \ref{lemma:SARC_upperbound_sigma}, we have $\sigma_T \le 2 \gamma L$. Hence,
	$\Abs{\mathcal{T}_{fail}} \leq \log\left({2 \gamma L}/{\sigma_0}\right)/\log \gamma + \Abs{\mathcal{T}_{succ}}$,
	which, using Lemma \ref{lemma:SARC_succ} gives the total iteration complexity as
	\begin{align*}
		T \le \log\left({2 \gamma L}/{\sigma_0}\right)/\log \gamma + 2(F(\x_0) - F_{\min}) \cdot \max\{\epsilon_{g}^{-2}, \epsilon_{H}^{-3}\}/(\eta \kappa_{\sigma}),
	\end{align*}
	where $ \kappa_\sigma$ is defined in Lemma \ref{lemma:SARC_succ}.
\qed
\end{proof}

In Theorem \ref{theorem:SARC_main_det} (as well as Theorem \ref{theorem:SARC_main_det_optimal} below), we require $ \epsilon \in \mathcal{O}(\sqrt{\epsilon_{g}},\epsilon_{H}) $. This can be rather strict and computationally unattractive, unless either crude solutions are required (e.g., in most machine learning applications very rough solutions are encouraged to avoid over-fitting), or the inexact Hessian is formed from a sub-set of data that is significantly smaller than the original dataset (e.g., see Section \ref{sec:sub_sampling_and_finite_sum} in the context of big-data regimes where $ n \gg 1 $ and $ |\mathcal{S}| \ll n $). Nonetheless, the theoretical existence of such tolerance, though small, implies a certain level of robustness of the algorithm, i.e., the complexity of the algorithm is not adversely affected by small errors in Hessian computations.

We note that, for iterations where $ \epsilon \ll \|\s_{t}\| $, \eqref{eq:Hessian_accuracy_H} is indeed a more stringent condition than \eqref{eq:Hessian_accuracy_H_quadratic_SARC}. As iterations progress towards optimality, step-size can become small, in which case  \eqref{eq:Hessian_accuracy_H}  might be theoretically more preferable. Nonetheless, beyond a direct theoretical comparison among various Hessian approximation bounds in terms of their tightness, the main advantage of  \eqref{eq:Hessian_accuracy_H} should be regarded in light of its simplicity, which allows for direct constructions of $ \H_t $ with a priori guarantees.

Condition \ref{condition:SARC_sufficient_descent} seems to be the bare minimum required to guarantee convergence to an approximate second-order criticality. Intuitively, however, if an approximate solution to the sub-problem \eqref{eq:SARC_subp} satisfies more than \eqref{eq:SARC_cond}, i.e., if we solve \eqref{eq:SARC_subp} more exactly than just requiring \eqref{eq:SARC_cond}, one could expect to be able to improve upon the iteration complexity of Theorem \ref{theorem:SARC_main_det}. Indeed, suppose we solve the reduced sub-problem on progressively embedded sub-spaces with increasingly higher dimensions, all of which including ``$ \text{Span}\{\s_t^C, \s_t^E\} $'', and stop when the corresponding solution $ \s_{t} $ satisfies the following conditions.

\begin{condition}[Sufficient Descent for Optimal Complexity]
	\label{condition:SARC_sufficient_descent_strict} 
	Assume that we solve the sub-problem \eqref{eq:SARC_subp} approximately to find $\s_{t}$ such that, in addition to \eqref{eq:SARC_cond}, we have
	\begin{align}
	\label{eq:SARC_Optimal_Stopping_Criterion}
%	\|\nabla m_{t}(\s_{t})\| \leq \theta_{t} \|\nabla F(\x_{t})\|, \quad \theta_{t} \triangleq \zeta \min\left\{ 1 , \|\s_{t}\| \right\},
	\|\nabla m_{t}(\s_{t})\| \leq \zeta \max\left\{\|\s_{t}\|^{2}, \theta_{t} \|\nabla F(\x_{t})\|\right\} , \quad \theta_{t} \triangleq \min\left\{ 1 , \|\s_{t}\| \right\},
	\end{align}
	for some prescribed $ \zeta \in (0,1) $. 
	Here, $m_t(\cdot)$ is defined in \eqref{eq:SARC_subp}.
\end{condition}

Conditions on the inexactness of the sub-problems were initially pioneered in \cite{cartis2011adaptiveI,cartis2011adaptiveII,cartis2012complexity}. However, the main drawback for these conditions is that the inexactness tolerance is closely tied with the magnitude of the gradient. More specifically, when gradient is small, e.g., near saddle points, the sub-problems are required to be solved exceedingly more accurately. In fact, at a saddle point where $ \|\nabla F(\x_{t})\| = 0 $, these conditions imply an exact solution to the sub-problem. 
To the best of our knowledge, Condition \ref{condition:SARC_sufficient_descent_strict} represents a novel criterion, which offers the best of both worlds: when gradient is large, we allow for crude solutions to the sub-problem, but near saddle-points where the gradient is small, inexactness will be determined by the step length, which can be significantly larger than the gradient.
Using Condition \ref{condition:SARC_sufficient_descent_strict}, we can obtain the optimal iteration complexity for Algorithm \ref{alg:SARC_fg}, as shown in Theorem \ref{theorem:SARC_main_det_optimal}. First, we prove the following two lemmas which will be used later for the proof of Theorem \ref{theorem:SARC_main_det_optimal}.
% The proof of Theorem \ref{theorem:SARC_main_det_optimal} is given in Appendix \ref{sec:SARC_optimal_complexity_proof}.

% -------------------- % optimal gradient % -------------------- %
\begin{lemma}
	\label{lemma:SARC_optimal_gradient}
	Suppose $ \|\nabla F(\x_{t})\| \geq \epsilon_{g} $. Given Assumption \ref{assumption:a1} and Condition \ref{condition:SARC_sufficient_descent}, let \eqref{eq:Hessian_accuracy_H} hold with $ \epsilon_{t} = \min\{\epsilon, \zeta \|\nabla F(\x_{t})\|\} $ where $ \epsilon $ is as in \eqref{eq:SARC_epsilon} and $\zeta \in (0,1/2) $. Furthermore, suppose \eqref{eq:SARC_subp} is solved such that Condition \ref{condition:SARC_sufficient_descent_strict} eventually holds. Then, we have
%	\begin{align*}
%	\label{eq:SARC_optimal_step_lowerbound}
	$\|\s_{t}\| \geq \kappa_{g} \sqrt{\|\nabla F(\x_{t+1})\|}$,
%	\end{align*}
	where
	\begin{align*}
		\kappa_{g} \triangleq \frac{2 (1- 2 \zeta)}{\left((1+4 \gamma) L + 2 \max\left\{ (\epsilon + \zeta \max\{1,K\} ), 2 \zeta \max\{1,K\}\right\}\right)}.
	\end{align*}
\end{lemma}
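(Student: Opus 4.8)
The plan is to derive an upper bound on $\|\nabla F(\x_{t+1})\|$ that is quadratic in $\|\s_t\|$ and then invert it. First I would write the exact gradient at the new iterate via the fundamental theorem of calculus, $\nabla F(\x_{t+1}) = \nabla F(\x_t) + \int_0^1 \nabla^2 F(\x_t + \tau\s_t)\s_t\,d\tau$, and use the definition of the model gradient, $\nabla m_t(\s_t) = \nabla F(\x_t) + \H_t\s_t + \sigma_t\|\s_t\|\s_t$, to substitute $\nabla F(\x_t) = \nabla m_t(\s_t) - \H_t\s_t - \sigma_t\|\s_t\|\s_t$. This produces the identity
\[ \nabla F(\x_{t+1}) = \nabla m_t(\s_t) - \sigma_t\|\s_t\|\s_t + (\nabla^2 F(\x_t) - \H_t)\s_t + \int_0^1 \left(\nabla^2 F(\x_t+\tau\s_t) - \nabla^2 F(\x_t)\right)\s_t\,d\tau. \]

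Taking norms and applying the triangle inequality splits the right-hand side into four pieces, each controlled by the hypotheses: the model-gradient term by Condition \ref{condition:SARC_sufficient_descent_strict}, namely $\|\nabla m_t(\s_t)\| \le \zeta\max\{\|\s_t\|^2, \theta_t\|\nabla F(\x_t)\|\}$; the cubic term $\sigma_t\|\s_t\|^2$, where Lemma \ref{lemma:SARC_upperbound_sigma} gives $\sigma_t \le \max\{\sigma_0, 2\gamma L\}$ (the $2\gamma L$ branch feeding $\kappa_g$); the Hessian-inexactness term $\epsilon_t\|\s_t\|$ by \eqref{eq:Hessian_accuracy_H} with $\epsilon_t = \min\{\epsilon,\zeta\|\nabla F(\x_t)\|\}$; and the Lipschitz-integral term $\tfrac{L}{2}\|\s_t\|^2$ by \eqref{eq:Hessian_Lipschitz_F}. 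This gives $\|\nabla F(\x_{t+1})\| \le \zeta\max\{\|\s_t\|^2,\theta_t\|\nabla F(\x_t)\|\} + (2\gamma + \tfrac12)L\|\s_t\|^2 + \epsilon_t\|\s_t\|$.

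The crux is to eliminate $\|\nabla F(\x_t)\|$ in favor of $\|\nabla F(\x_{t+1})\|$ and $\|\s_t\|^2$. I would use the gradient-Lipschitz estimate implied by \eqref{eq:Hessian_boundedness_F} (with the clean constant $\max\{1,K\}$, the extra $O(L\|\s_t\|^2)$ being absorbed into the quadratic terms), $\|\nabla F(\x_t)\| \le \|\nabla F(\x_{t+1})\| + \max\{1,K\}\|\s_t\|$, and then case-split on $\|\s_t\|$. The definition $\theta_t = \min\{1,\|\s_t\|\}$ is exactly what makes this work: the factor $\theta_t \le 1$ turns $\|\nabla F(\x_{t+1})\|$ into a term with coefficient $\zeta$ and \emph{no} spurious $\|\s_t\|$, while $\theta_t \le \|\s_t\|$ turns the leftover $\max\{1,K\}\|\s_t\|$ into a $\|\s_t\|^2$ term. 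Applying the same swap to $\epsilon_t\|\s_t\| \le \zeta\|\nabla F(\x_t)\|\|\s_t\|$ yields a second $\zeta\|\nabla F(\x_{t+1})\|$ when $\|\s_t\| < 1$, whereas for $\|\s_t\| \ge 1$ one instead uses $\epsilon_t \le \epsilon$ to route this term into the $\|\s_t\|^2$ coefficient; this is precisely the origin of the two branches $\epsilon+\zeta\max\{1,K\}$ and $2\zeta\max\{1,K\}$. Collecting terms and moving at most two copies of $\zeta\|\nabla F(\x_{t+1})\|$ to the left gives $(1-2\zeta)\|\nabla F(\x_{t+1})\| \le \left[(2\gamma+\tfrac12)L + \max\{\epsilon+\zeta\max\{1,K\}, 2\zeta\max\{1,K\}\}\right]\|\s_t\|^2$, i.e., $\|\s_t\|^2 \ge \kappa_g\|\nabla F(\x_{t+1})\|$. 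Since $\kappa_g \le 1$, this implies $\|\s_t\| \ge \sqrt{\kappa_g}\sqrt{\|\nabla F(\x_{t+1})\|} \ge \kappa_g\sqrt{\|\nabla F(\x_{t+1})\|}$, as claimed.

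The main obstacle is purely the bookkeeping in this case analysis: tracking which of $\|\s_t\|^2$ and $\theta_t\|\nabla F(\x_t)\|$ attains the maximum, together with the split $\|\s_t\| \gtrless 1$, so that exactly the factor $(1-2\zeta)$ emerges on the left and the two coefficient branches combine into the stated maximum. Everything else — the identity, the four-term triangle inequality, and the individual per-term bounds — is routine once the substitution for $\nabla m_t(\s_t)$ has been made.
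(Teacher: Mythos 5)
Your proposal is correct and takes essentially the same route as the paper's own proof: the same decomposition of $\nabla F(\x_{t+1})-\nabla m_t(\s_t)$ into Lipschitz, Hessian-inexactness, and cubic pieces (with $\sigma_t\le 2\gamma L$ from Lemma \ref{lemma:SARC_upperbound_sigma}), the same substitution $\|\nabla F(\x_t)\|\le\|\nabla F(\x_{t+1})\|+K\|\s_t\|$, and the same case analysis on which term attains the maximum in Condition \ref{condition:SARC_sufficient_descent_strict} together with the split $\|\s_t\|\gtrless 1$, arriving at $(1-2\zeta)\|\nabla F(\x_{t+1})\|\le\left(\left(\tfrac12+2\gamma\right)L+\max\left\{\epsilon+\zeta\max\{1,K\},\,2\zeta\max\{1,K\}\right\}\right)\|\s_t\|^2$. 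The two places where you are more explicit than the paper (absorbing the extra $O(L\|\s_t\|^2)$ that arises from bounding the Hessian along the path rather than only at $\x_t$, and invoking $\kappa_g\le 1$ to pass from $\sqrt{\kappa_g}$ to $\kappa_g$) are precisely steps the paper leaves implicit, so the two arguments match.
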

\begin{proof}
First, suppose $ \|\s_{t}\|^{2} \leq \theta_{t} \|\nabla F(\x_{t})\| $. Using Condition \ref{condition:SARC_sufficient_descent_strict}, we get
$\|\nabla F(\x_{t+1})\| \leq \|\nabla F(\x_{t+1}) - \nabla m_{t}(\s_{t})\| + \|\nabla m_{t}(\s_{t})\| \leq \|\nabla F(\x_{t+1}) - \nabla m_{t}(\s_{t})\| + \theta_{t} \|\nabla F(\x_{t})\|$. Noting that  $\nabla m_{t}(\s_{t}) = \nabla F(\x_{t}) + \H_{t} \s_{t} + \sigma_{t} \|\s_{t}\| \s_{t}$, and using Mean Value Theorem for vector-valued functions, \eqref{eq:Hessian_Lipschitz_F} and \eqref{eq:Hessian_accuracy_H}, we get
\begin{align*}
&\|\nabla F(\x_{t+1}) - \nabla m_{t}(\s_{t})\| \leq \|\int_{0}^{1} \nabla^{2} F(\x_{t}+\tau \s_{t}) \s_{t} d \tau - \H_{t}\s_{t}\| + \sigma_{t} \|\s_{t}\|^{2} \\
&\leq \|\int_{0}^{1} \left(\nabla^{2} F(\x_{t}+\tau \s_{t}) - \nabla^{2} F(\x_{t})\right) \s_{t} d \tau + \left(\nabla^{2} F(\x_{t}) - \H_{t} \right) \s_{t}\| + \sigma_{t} \|\s_{t}\|^{2} \\
&\leq \|\s_{t}\|\int_{0}^{1} \|\nabla^{2} F(\x_{t}+\tau \s_{t}) - \nabla^{2} F(\x_{t})\| d \tau + \| \left(\nabla^{2} F(\x_{t}) - \H_{t} \right) \s_{t}\| + \sigma_{t} \|\s_{t}\|^{2} \\
&\leq L \|\s_{t} \|^{2}\int_{0}^{1} \tau d \tau + \epsilon_{t} \|\s_{t}\| + \sigma_{t} \|\s_{t}\|^{2} 
%= \left(\frac{L}{2} +\sigma_{t} \right) \|\s_{t} \|^{2} + \epsilon_{t} \|\s_{t}\| 
\leq \left(\frac{L}{2} + 2 \gamma L \right) \|\s_{t} \|^{2} + \epsilon_{t} \|\s_{t}\|,
\end{align*}
where the last equality follows from Lemma \ref{lemma:SARC_upperbound_sigma}. From \eqref{eq:Hessian_boundedness_F}, it follows that
\begin{align}
\label{eq:grad_norm}
\|\nabla F(\x_{t})\| \leq K \|\s_{t}\| + \|\nabla F(\x_{t+1})\|.
\end{align}
As such, using $ \theta_{t} \leq \zeta $ from Condition \ref{condition:SARC_sufficient_descent_strict} as well as the assumption on $ \epsilon_{t} $, we get
\begin{align*}
\|\nabla F(\x_{t+1})\| &\leq \left(\frac{L}{2} + 2 \gamma L \right) \|\s_{t} \|^{2} + \epsilon_{t} \|\s_{t}\| + \theta_{t}  K \|\s_{t}\| + \theta_{t} \|\nabla F(\x_{t+1})\| \\
&\leq \left(\frac{L}{2} + 2 \gamma L \right) \|\s_{t} \|^{2} + \epsilon_{t} \|\s_{t}\| + \theta_{t}  K \|\s_{t}\| + \zeta \|\nabla F(\x_{t+1})\|,
\end{align*} 
which implies that $(1-\zeta) \|\nabla F(\x_{t+1})\| \leq \left(L/2 + 2 \gamma L \right) \|\s_{t} \|^{2} + \left( \epsilon_{t} + \theta_{t}  K \right) \|\s_{t}\|$.
Now using Condition  \ref{condition:SARC_sufficient_descent_strict}, we consider two cases: 
\begin{enumerate}
	\item If $ \|\s_{t}\| \geq 1$, then we get $\left( \epsilon_{t} + \theta_{t}  K \right) \|\s_{t}\| \leq \left( \epsilon_{t} + \theta_{t}  K \right) \|\s_{t}\|^{2} \leq (\epsilon + \zeta K )\|\s_{t}\|^{2}$. Hence, it follows that $(1-\zeta) \|\nabla F(\x_{t+1})\| \leq \left(L/2 + 2 \gamma L + (\epsilon + \zeta K ) \right) \|\s_{t} \|^{2}$.
	
	\item If $ \|\s_{t}\| \leq 1$, then from assumption on $ \epsilon_{t} $ and \eqref{eq:grad_norm} , we have $\epsilon_{t} \|\s_{t}\| \leq \zeta \|\nabla F(\x_{t}) \|\|\s_{t}\| \leq \zeta (K \| \s_{t} \|^{2} + \|\nabla F(\x_{t+1})\| \| \s_{t} \|) 
	\leq \zeta (K \| \s_{t} \|^{2} + \|\nabla F(\x_{t+1})\|)$.	Now by assumption on $ \theta_{t} $, we get $\left( \epsilon_{t} + \theta_{t}  K \right) \|\s_{t}\| =  \epsilon_{t}\|\s_{t}\| + \theta_{t}  K \|\s_{t}\| \leq 2 \zeta K \| \s_{t} \|^{2} + \zeta \|\nabla F(\x_{t+1})\|$, which, in turn, implies that $(1-2\zeta) \|\nabla F(\x_{t+1})\| \leq \left(L/2 + 2 \gamma L + 2 \zeta K \right) \|\s_{t} \|^{2}$.
\end{enumerate}
Now suppose, $ \|\s_{t}\|^{2} \geq \theta_{t} \|\nabla F(\x_{t})\| $. As above, we have $\|\nabla F(\x_{t+1})\| \leq \|\nabla F(\x_{t+1}) - \nabla m_{t}(\s_{t})\| + \|\nabla m_{t}(\s_{t})\| \leq \left({L}/{2} + 2 \gamma L + \zeta  \right) \|\s_{t} \|^{2} + \epsilon_{t} \|\s_{t}\|$. If $ \|\s_{t}\| \geq 1$, we have $\epsilon_{t} \|\s_{t}\| \leq \epsilon\|\s_{t}\|^{2}$, which gives $\|\nabla F(\x_{t+1})\| \leq \left({L}/{2} + 2 \gamma L + \zeta +\epsilon \right) \|\s_{t} \|^{2}$. Otherwise, if $ \|\s_{t}\| \leq 1 $, then $ \|\s_{t}\|^{2} \geq \theta_{t} \|\nabla F(\x_{t})\| $ implies that $ \|\s_{t}\| \geq \|\nabla F(\x_{t})\|$. From assumption on $ \epsilon_{t} $, it follows  that $\epsilon_{t} \|\s_{t}\| \leq \zeta \|\nabla F(\x_{t}) \|\|\s_{t}\| \leq \zeta \|\s_{t}\|^{2}$, which in turn gives $\|\nabla F(\x_{t+1})\| \leq \left({L}/{2} + 2 \gamma L + 2\zeta  \right) \|\s_{t} \|^{2}$.
\qed
%	Hence, we get the desired result.\qed
\end{proof}

% -------------------- % success iteration % -------------------- %

\begin{lemma}[Success Iterations: Optimal Case]
	\label{lemma:SARC_succ_optimal}
	Let $$\mathcal{T}_{\text{succ}} \triangleq \{t; \; \|\nabla F(\x_{t})\| \geq \epsilon_{g} \; \lor \; \lmin(\H_{t}) \leq -\epsilon_{H} \},$$ be the set of all successful iterations, before Algorithm \ref{alg:SARC_fg} terminates.
	Under the conditions of Lemma \ref{lemma:SARC_optimal_gradient}, we must have
	$\Abs{\mathcal{T}_{\text{succ}}} \in \mathcal{O}(\max\{\epsilon_{H}^{-3}, \epsilon_{g}^{-3/2}\})$.
%	, where $ \eta $ is the hyper parameter of Algorithm \ref{alg:SARC_fg}, $ \sigma_{t} \geq \sigma_{\min} $ and $ \kappa_{g} $ is defined in Lemma \ref{lemma:SARC_optimal_gradient}.
\end{lemma}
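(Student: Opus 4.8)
The plan is to rerun the telescoping bookkeeping of Lemma~\ref{lemma:SARC_succ}, but to feed it the sharper per-iteration decrease coming from Lemma~\ref{lemma:SARC_optimal_gradient} whenever the gradient, rather than the curvature, is the active quantity. Since every successful iteration obeys $\rho_t \ge \eta$ and $F$ is monotone, I would again start from $F(\x_0) - F_{\min} \ge \eta \sum_{t \in \mathcal{T}_{\text{succ}}} \left( -m_t(\s_t) \right)$, so it suffices to produce, for each $t \in \mathcal{T}_{\text{succ}}$, a lower bound on $-m_t(\s_t)$ of the appropriate order. I would split $\mathcal{T}_{\text{succ}} = \mathcal{T}_H \cup \mathcal{G}$, where $\mathcal{T}_H \triangleq \{ t \in \mathcal{T}_{\text{succ}} : \lmin(\H_t) \le -\epsilon_H \}$ and $\mathcal{G} \triangleq \mathcal{T}_{\text{succ}} \setminus \mathcal{T}_H$; by the definition of $\mathcal{T}_{\text{succ}}$, every $t \in \mathcal{G}$ has $\lmin(\H_t) > -\epsilon_H$ and therefore $\|\nabla F(\x_t)\| \ge \epsilon_g$, which is exactly the hypothesis of Lemma~\ref{lemma:SARC_optimal_gradient}.

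The curvature set requires nothing new: as in the proof of Lemma~\ref{lemma:SARC_succ}, the second branch of \eqref{eq:SARC_Eigen} gives $-m_t(\s_t) \ge \nu^3 |\lmin(\H_t)|^3 / (6\sigma_t^2)$, and inserting the uniform upper bound $\sigma_t \le \max\{\sigma_0, 2\gamma L\}$ from Lemma~\ref{lemma:SARC_upperbound_sigma} yields a decrease of order $\epsilon_H^3$, hence $\Abs{\mathcal{T}_H} \in \mathcal{O}(\epsilon_H^{-3})$. For the gradient set $\mathcal{G}$, I would combine the model-decrease estimate $-m_t(\s_t) \ge \tfrac{\sigma_t}{6}\|\s_t\|^3$ --- valid because, under Condition~\ref{condition:SARC_sufficient_descent_strict}, $\s_t$ is the exact minimizer of $m_t$ over a subspace containing the Cauchy and Eigen directions --- with the bound $\|\s_t\| \ge \kappa_g \sqrt{\|\nabla F(\x_{t+1})\|}$ of Lemma~\ref{lemma:SARC_optimal_gradient}. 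For every $t \in \mathcal{G}$ whose successor still has a large gradient, i.e.\ $\|\nabla F(\x_{t+1})\| \ge \epsilon_g$, this produces $-m_t(\s_t) \gtrsim \epsilon_g^{3/2}$, so the number of such iterations is $\mathcal{O}(\epsilon_g^{-3/2})$.

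Two points are delicate. First --- and this is the crux separating the optimal rate from the $\epsilon_g^{-2}$ rate of Lemma~\ref{lemma:SARC_succ} --- the estimate $-m_t(\s_t) \ge \tfrac{\sigma_t}{6}\|\s_t\|^3$ carries the factor $\sigma_t$ in the \emph{wrong} direction: here a lower bound $\sigma_t \ge \sigma_{\min} > 0$ is needed (whereas $\mathcal{T}_H$ only used the upper bound), so one must appeal to the lower safeguard on the regularization parameter maintained by the cubic-regularization updates. Second, one must control the ``boundary'' iterations $t \in \mathcal{G}$ with $\|\nabla F(\x_{t+1})\| < \epsilon_g$, for which the above decrease degrades. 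I would argue these are few: each such $t$ produces an iterate $\x_{t+1}$ with small gradient at which the algorithm does not stop, hence with $\lmin \le -\epsilon_H$, opening a new negative-curvature phase that must contain at least one successful iteration of $\mathcal{T}_H$ (unsuccessful steps inflate $\sigma_t$ until Lemma~\ref{lemma:SARC_model_hess} forces success). Distinct boundary iterations open distinct phases, so their number is at most $\Abs{\mathcal{T}_H} \in \mathcal{O}(\epsilon_H^{-3})$, which is harmlessly absorbed into $\mathcal{O}(\max\{\epsilon_g^{-3/2}, \epsilon_H^{-3}\})$. Summing the three contributions yields $\Abs{\mathcal{T}_{\text{succ}}} \in \mathcal{O}(\max\{\epsilon_g^{-3/2}, \epsilon_H^{-3}\})$.
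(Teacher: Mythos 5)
Your proposal is correct and runs on the same engine as the paper's proof --- Lemma \ref{lemma:SARC_optimal_gradient}, the subspace-minimizer bound $-m_t(\s_t) \ge \sigma_t\|\s_t\|^3/6$ combined with a lower safeguard $\sigma_t \ge \sigma_{\min}$, the Eigen-point decrease combined with the upper bound of Lemma \ref{lemma:SARC_upperbound_sigma}, and telescoping of $F$ --- but the bookkeeping is organized differently. The paper partitions $\mathcal{T}_{\text{succ}}$ by the status of the \emph{next} iterate: $\mathcal{T}^1_{\text{succ}}$ (where $\|\nabla F(\x_{t+1})\| \ge \epsilon_g$), bounded via the minimum of the curvature decrease and the $\epsilon_g^{3/2}$ decrease (the minimum is needed there because a $\mathcal{T}^1_{\text{succ}}$ iteration may itself be a pure negative-curvature step, to which Lemma \ref{lemma:SARC_optimal_gradient} does not apply); $\mathcal{T}^2_{\text{succ}}$ (next gradient small but $\lmin(\H_{t+1}) \le -\epsilon_H$), bounded by a shifted telescoping that attributes the decrease $F(\x_{t+1})-F(\x_{t+2})$ of iteration $t+1$; and $\mathcal{T}^3_{\text{succ}}$ (termination), a single iteration. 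You partition instead by the \emph{current} iterate ($\mathcal{T}_H$ versus $\mathcal{G}$) and sub-split $\mathcal{G}$ by the successor's gradient; this yields the same three bounds and makes the minimum unnecessary. Where you genuinely diverge --- and in fact do better --- is the boundary set: the paper's inequality $F(\x_{t+1}) - F(\x_{t+2}) \ge -\eta\, m_{t+1}(\s_{t+1})$ tacitly assumes iteration $t+1$ is successful, which need not hold (a rejected step gives zero decrease on the left but a positive right-hand side); your phase argument, which maps each boundary iteration injectively to the first successful curvature iteration that follows (rejections inflate $\sigma$ until Lemma \ref{lemma:SARC_model_hess} forces acceptance), closes this gap. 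The only omission is the case where the phase ends in termination rather than in a successful curvature step --- the paper's $\mathcal{T}^3_{\text{succ}}$ --- which contributes at most one further iteration and is harmless. Two caveats you share with the paper: both arguments invoke $\sigma_t \ge \sigma_{\min} > 0$, which Algorithm \ref{alg:SARC_fg} as written does not explicitly enforce (the paper uses it silently; you at least flag it), and both read Condition \ref{condition:SARC_sufficient_descent_strict} in its intended implementation --- $\s_t$ an exact minimizer over a subspace containing the Cauchy and Eigen directions --- so that \cite[Lemma 3.3]{cartis2011adaptiveI} applies.
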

\begin{proof}
	From \eqref{eq:SARC_Eigen} and Lemma \ref{lemma:SARC_upperbound_sigma}, if $\lmin(\nabla^2 \H_{t}) \le -\epsilon_{H}$, it follows that 
	$-m_{t}(\s_{t}) \geq \nu^{3}|\lambda_{\min}(\H_{t})|^{3}/(6 \sigma_{t}^{2}) \geq \nu^{3} \epsilon_{H}^{3}/(24 \gamma^{2} L^{2})$.
	Note that
$\mathcal{T}_{\text{succ}} = \mathcal{T}^{1}_{\text{succ}} \bigcup \mathcal{T}^{2}_{\text{succ}} \bigcup \mathcal{T}^{3}_{\text{succ}},
$where
	\begin{align*}
	\mathcal{T}^{1}_{\text{succ}} &\triangleq \left\{t \in \mathcal{T}_{\text{succ}}; \; \|\nabla F(\x_{t+1})\| \geq \epsilon_{g} \right\}, \\
	\mathcal{T}^{2}_{\text{succ}} &\triangleq \left\{t \in \mathcal{T}_{\text{succ}}; \; \|\nabla F(\x_{t+1})\| \leq \epsilon_{g} ~ \text{ and } ~ \lmin(H_{t+1}) \leq -\epsilon_{H} \right\} \\
		\mathcal{T}^{3}_{\text{succ}} &\triangleq \left\{t \in \mathcal{T}_{\text{succ}}; \; \|\nabla F(\x_{t+1})\| \leq \epsilon_{g} ~ \text{ and } ~ \lmin(H_{t+1}) \geq -\epsilon_{H} \right\}.
	\end{align*}
	We bound each of these sets individually. Since $F(\x_t)$ is monotonically decreasing, from \cite[Lemma 3.3]{cartis2011adaptiveI}, $ \sigma_{t} \geq \sigma_{\min} $, and Lemmas \ref{lemma:SARC_upperbound_sigma} and \ref{lemma:SARC_optimal_gradient}, we have
	\begin{align*}
	&F(\x_0) - F_{\min} \ge \sum_{t=0}^\infty F(\x_t) - F(\x_{t+1})
	 \ge \sum_{t\in \mathcal{T}^{1}_{\text{succ}}} F(\x_t) - F(\x_{t+1}) \geq - \eta \sum_{t\in \mathcal{T}^{1}_{\text{succ}}} m_{t}(\s_{t}) \\
	&\geq \eta  \sum_{t\in \mathcal{T}^{1}_{\text{succ}}} \min \left\{\frac{\nu^{3} \epsilon_{H}^{3}}{24 \gamma^{2} L^{2}},  \frac{\sigma_{\min}}{6} \|\s_{t}\|^{3}\right\} 
	\geq \eta  \sum_{t\in \mathcal{T}^{1}_{\text{succ}}} \min \left\{\frac{\nu^{3} \epsilon_{H}^{3}}{24 \gamma^{2} L^{2}},  \frac{\sigma_{\min} \kappa_{g}^{3}}{6} \|\nabla F(\x_{t+1})\|^{3/2}\right\} \\
	&\geq \eta \sum_{t\in \mathcal{T}^{1}_{\text{succ}}} \min \left\{\frac{\nu^{3} \epsilon_{H}^{3}}{24 \gamma^{2} L^{2}},  \frac{\sigma_{\min} \kappa_{g}^{3}}{6} \epsilon_{g}^{3/2}\right\} 
	\geq \eta  \sum_{t\in \mathcal{T}^{1}_{\text{succ}}} \min \left\{\frac{\nu^{3}}{24 \gamma^{2} L^{2}},  \frac{\sigma_{\min} \kappa_{g}^{3}}{6} \right\} \min\{\epsilon_{H}^{3}, \epsilon_{g}^{3/2}\}.
	\end{align*}
	Hence, $\Abs{\mathcal{T}^{1}_{\text{succ}}} \leq \kappa^{1}_{\mathcal{T}_{\text{succ}}} \max\{\epsilon_{H}^{-3}, \epsilon_{g}^{-3/2}\}$, where $$\kappa^{1}_{\mathcal{T}_{\text{succ}}} \triangleq (F(\x_0) - F_{\min}) \max \{{24 \gamma^{2} L^{2}}/{\nu^{3}},  {6}/({\sigma_{\min} \kappa_{g}^{3}}) \}/\eta.$$
	% \end{align*}
	As for $\mathcal{T}^{2}_{\text{succ}}$, we have
	\begin{align*}
	& F(\x_0) - F_{\min} \ge  F(\x_{0}) - F(\x_{1}) + \sum_{t=0}^\infty F(\x_{t+1}) - F(\x_{t+2}) \\
	& \ge F(\x_{0}) - F(\x_{1}) + \sum_{t\in \mathcal{T}^{2}_{\text{succ}}} F(\x_{t+1}) - F(\x_{t+2}) 
	\geq F(\x_{0}) - F(\x_{1}) - \eta \sum_{t\in \mathcal{T}^{2}_{\text{succ}}} m_{t+1}(\s_{t+1}) \\
	&\geq F(\x_{0}) - F(\x_{1}) + \eta  \sum_{t\in \mathcal{T}^{2}_{\text{succ}}} \frac{\nu^{3} \epsilon_{H}^{3}}{24 \gamma^{2} L^{2}}. 
	\end{align*}
	Hence,
	$\Abs{\mathcal{T}^{2}_{\text{succ}}} \leq 	\kappa^{2}_{\mathcal{T}_{\text{succ}}} \epsilon_{H}^{-3}$, where $\kappa^{2}_{\mathcal{T}_{\text{succ}}} \triangleq (F(\x_1) - F_{\min}) {24 \gamma^{2} L^{2}}/(\eta \nu^{3})$.
	Finally, we have $ \Abs{	\mathcal{T}^{3}_{\text{succ}}} = 1 $, because in such a case, the algorithm stops in one iteration. Putting these bounds all together, we get $\Abs{\mathcal{T}_{\text{succ}}} \leq \max\{1, \kappa^{1}_{\mathcal{T}_{\text{succ}}}, \kappa^{2}_{\mathcal{T}_{\text{succ}}}\} \max\{\epsilon_{H}^{-3}, \epsilon_{g}^{-3/2}\}$. \qed
\end{proof}

Now we can obtain the optimal complexity bound of Algorithm \ref{alg:SARC_fg} in Theorem \ref{theorem:SARC_main_det_optimal}. The proof follows similarly as that of Theorem \ref{theorem:SARC_main_det}, and hence is omitted here.
% -------------------- % optimal complexity bound % -------------------- %

\begin{theorem}[Optimal Complexity of Algorithm \ref{alg:SARC_fg}]
	\label{theorem:SARC_main_det_optimal}
	Consider any $ 0< \epsilon_{g},\epsilon_{H} < 1 $. Suppose the inexact Hessian, $ \H(\x)$, satisfies Conditions \eqref{eq:Hessian_approximation} with the approximation tolerance, $ \epsilon $, in \eqref{eq:Hessian_accuracy_H} as $ \epsilon = \min\{\epsilon_{0}, \zeta \epsilon_{g} \} $ where $ \epsilon_{0} $ is as in \eqref{eq:SARC_epsilon}, and $\zeta \in (0,1/2) $. 
	For Problem \eqref{eq:obj} and under Assumption \ref{assumption:a1}, if the approximate solution to the sub-problem \eqref{eq:SARC_subp} satisfies Conditions \ref{condition:SARC_sufficient_descent} and \ref{condition:SARC_sufficient_descent_strict}, then Algorithm \ref{alg:SARC_fg} terminates after  at most 
%	\begin{align*}
	$T \in \bigO\left(\max\{\epsilon_g^{-3/2}, \epsilon_H^{-3}\}\right)$
%	\end{align*}
	iterations.
\end{theorem}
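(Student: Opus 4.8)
The plan is to mirror the bookkeeping of the proof of Theorem \ref{theorem:SARC_main_det} essentially verbatim, replacing only the successful-iteration count by the sharper bound of Lemma \ref{lemma:SARC_succ_optimal}. First I would check that the prescribed tolerance is admissible: since $\epsilon = \min\{\epsilon_0, \zeta\epsilon_g\}$ with $\epsilon_0$ the quantity in \eqref{eq:SARC_epsilon}, we have $\epsilon \le \epsilon_0$, so the hypotheses of Lemma \ref{lemma:SARC_upperbound_sigma} hold and we retain the uniform bound $\sigma_t \le \max\{\sigma_0, 2\gamma L\}$ for all $t$. Moreover, on any iteration before termination we have $\|\nabla F(\x_t)\| \ge \epsilon_g$, whence $\epsilon \le \zeta\epsilon_g \le \zeta\|\nabla F(\x_t)\|$ and thus $\min\{\epsilon, \zeta\|\nabla F(\x_t)\|\} = \epsilon$; this reconciles the fixed tolerance of the statement with the accuracy requirement $\epsilon_t = \min\{\epsilon, \zeta\|\nabla F(\x_t)\|\}$ assumed in Lemma \ref{lemma:SARC_optimal_gradient}.

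Next, suppose the algorithm halts at iteration $T$ and partition the indices into successful iterations $\mathcal{T}_{\text{succ}}$ and unsuccessful ones $\mathcal{T}_{\text{fail}}$, so $T = |\mathcal{T}_{\text{succ}}| + |\mathcal{T}_{\text{fail}}|$. Because the regularization parameter is divided by $\gamma$ on success and multiplied by $\gamma$ on failure, we have $\sigma_T = \sigma_0 \gamma^{|\mathcal{T}_{\text{fail}}| - |\mathcal{T}_{\text{succ}}|}$. Combining this with $\sigma_T \le 2\gamma L$ from Lemma \ref{lemma:SARC_upperbound_sigma} and taking logarithms yields
\begin{align*}
|\mathcal{T}_{\text{fail}}| \le \frac{1}{\log\gamma}\log\left(\frac{2\gamma L}{\sigma_0}\right) + |\mathcal{T}_{\text{succ}}|,
\end{align*}
so that $T$ is, up to an additive constant independent of $\epsilon_g, \epsilon_H$, at most twice $|\mathcal{T}_{\text{succ}}|$.

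The final step is to invoke Lemma \ref{lemma:SARC_succ_optimal} in place of Lemma \ref{lemma:SARC_succ}. That lemma supplies $|\mathcal{T}_{\text{succ}}| \in \mathcal{O}(\max\{\epsilon_H^{-3}, \epsilon_g^{-3/2}\})$, and substituting it into the relation above immediately gives $T \in \bigO(\max\{\epsilon_g^{-3/2}, \epsilon_H^{-3}\})$. I do not expect a genuine obstacle at the level of the theorem itself: all the substantive work — in particular obtaining the improved $\epsilon_g^{-3/2}$ scaling by bounding the per-iteration model decrease through $\|\nabla F(\x_{t+1})\|$ rather than $\|\nabla F(\x_t)\|$, via the step-length lower bound $\|\s_t\| \ge \kappa_g \sqrt{\|\nabla F(\x_{t+1})\|}$ of Lemma \ref{lemma:SARC_optimal_gradient} together with Condition \ref{condition:SARC_sufficient_descent_strict} — has already been absorbed into Lemmas \ref{lemma:SARC_optimal_gradient} and \ref{lemma:SARC_succ_optimal}. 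The only point warranting care is the compatibility of the two tolerance prescriptions, which is exactly the verification carried out in the first paragraph.
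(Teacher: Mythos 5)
Your proposal is correct and is exactly the route the paper intends: the paper omits this proof, stating it "follows similarly as that of Theorem \ref{theorem:SARC_main_det}", i.e., the same success/failure bookkeeping via $\sigma_T = \sigma_0\gamma^{|\mathcal{T}_{\text{fail}}|-|\mathcal{T}_{\text{succ}}|}$ and Lemma \ref{lemma:SARC_upperbound_sigma}, with Lemma \ref{lemma:SARC_succ} swapped out for Lemma \ref{lemma:SARC_succ_optimal}. Your additional verification that $\epsilon \le \epsilon_0$ keeps Lemma \ref{lemma:SARC_upperbound_sigma} in force is a worthwhile point the paper leaves implicit.

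One claim in your first paragraph is false as stated and should be repaired: it is \emph{not} true that $\|\nabla F(\x_t)\| \ge \epsilon_g$ on every iteration before termination. The algorithm stops only when \emph{both} $\|\nabla F(\x_t)\| \le \epsilon_g$ and $\lambda_{\min}(\H_t) \ge -\epsilon_H$ hold, so a pre-termination iterate may well have a small gradient accompanied by sufficiently negative curvature. The reconciliation you need is weaker and still goes through: Lemma \ref{lemma:SARC_optimal_gradient} is only ever invoked (inside the proof of Lemma \ref{lemma:SARC_succ_optimal}) on iterations satisfying its own hypothesis $\|\nabla F(\x_t)\| \ge \epsilon_g$, and on precisely those iterations $\epsilon = \min\{\epsilon_0, \zeta\epsilon_g\} \le \min\{\epsilon_0, \zeta\|\nabla F(\x_t)\|\} = \epsilon_t$, so a Hessian accurate to the fixed tolerance $\epsilon$ automatically satisfies \eqref{eq:Hessian_accuracy_H} with the tolerance $\epsilon_t$ that the lemma assumes; on the remaining pre-termination iterations (small gradient, negative curvature) the eigen-direction decrease \eqref{eq:SARC_Eigen} is used instead and no tolerance reconciliation is needed. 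With that one-line fix your argument is complete.
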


From \eqref{eq:Hessian_accuracy_H}, upon termination of Algorithm \ref{alg:SARC_fg}, the obtained solution satisfies $(\epsilon_g, \epsilon + \epsilon_H)$-Optimality as in \eqref{eq:2ndopt}, i.e., $ \norm{\nabla F(\x_{T})} \leq \epsilon_{g} $ and $ \lambda_{\min}\left(\nabla^{2} F(\x_{T})\right) \geq -(\epsilon_{H} + \epsilon) $.

%Note that in order to get an accumulative success probability of $1-\delta_{0}$ for $T$ iterations, the per-iteration failure probability is set as $\delta = 1- \sqrt[T]{(1-\delta_{0})}\in \mathcal{O}(\delta_{0}/T)$. 
%In our analysis, the failure probability appears in the ``$ \log $'' term. Hence, one can choose a fairly small failure probability and ensure that even after many iterations, with high probability, all iterations are successful. More precisely, 

\section{Finite-Sum Minimization}
\label{sec:sub_sampling_and_finite_sum}
In this section, we give concrete and practical examples to demonstrate ways to construct the approximate Hessian, which satisfies Condition \ref{condition:Hessian_approximation_H}. By considering \emph{finite-sum minimization}, a ubiquitous problem arising frequently in machine learning, we showcase the practical benefits of the proposed relaxed requirement \eqref{eq:Hessian_accuracy_H} for approximating Hessian, compared to the stronger alternative \eqref{eq:Hessian_accuracy_H_quadratic_SARC}. In Section \ref{sec:randomzied_sub-sampling}, we describe randomized techniques to appropriately construct the approximate Hessian, followed by the convergence analysis of Algorithms \ref{alg:STR_fg} and \ref{alg:SARC_fg} with such Hessian approximations in Section \ref{sec:probabilistic_convergence}.

\subsection{Randomized Sub-Sampling}
\label{sec:randomzied_sub-sampling}
Indeed, a major advantage of \eqref{eq:Hessian_accuracy_H} over \eqref{eq:Hessian_accuracy_H_quadratic_SARC} is that there are many approximation techniques that can produce an inexact Hessian satisfying \eqref{eq:Hessian_accuracy_H}. Of particular interest in our present paper is the application of randomized matrix approximation techniques, which have recently shown great success in the area of RandNLA at solving various numerical linear algebra tasks \cite{woodruff2014sketching,mahoney2011randomized,drineas2016randnla}. For this, we consider the highly prevalent finite-sum minimization problem \eqref{eq:obj_sum} and employ random sampling as a way to construct approximations to the exact Hessian, which are, probabilistically, ensured to satisfy \eqref{eq:Hessian_accuracy_H}. 
%More specifically, in this section, we consider the optimization problem \eqref{eq:obj_sum}
%\begin{align}
%%\label{eq:obj_sum}
%\min_{\x \in \bbR^d} F(\x) \triangleq \frac{1}{n}\sum_{i=1}^n f_i(\x), \tag{{\bf P1}}
%\end{align}
%where each $f_i(\x)$ is a smooth but possibly non-convex function. 
Many machine learning and scientific computing applications involve finite-sum optimization problems of the form \eqref{eq:obj_sum} where each $f_{i}$ is a loss (or misfit) function corresponding to $i^{th}$ observation (or measurement), e.g., see \cite{rodoas1,rodoas2,roszas,friedman2001elements,bottou2016optimization,sra2012optimization} and references therein. 
%In particular, in machine learning applications, $F$ in \eqref{eq:obj_sum} corresponds to the \emph{empirical risk} \cite{shalev2014understanding} and the goal of solving \eqref{eq:obj} is to obtain a solution with small generalization error, i.e., high predictive accuracy on ``unseen'' data. 

Here, we consider \eqref{eq:obj_sum} in large-scale regime where $n , d \gg 1$. In such settings, the mere evaluations of the Hessian and the gradient increase linearly in $ n $. Indeed, for big-data problems, the operations with the Hessian, e.g., matrix-vector products involved in the (approximate) solution of the sub-problems \eqref{eq:STR_subp} and \eqref{eq:SARC_subp}, typically constitute the main bottleneck of computations, and in particular when $ n \gg 1 $, are computationally prohibitive. 
%One can argue that this challenge alone has greatly contributed in the fall of many second order methods from the machine learning community's radar. 
%In fact, the cost of evaluating the exact gradient is usually amortized by that of operations with the Hessian. 
For the special case of \eqref{eq:obj_sum} in which each $ f_{i} $ is convex, randomized sub-sampling has shown to be effective in reducing such costs, e.g., \cite{roosta2018sub,xu2016sub,bollapragada2016exact}. We now show that such randomized approximation techniques can indeed be effectively employed for the non-convex settings considered in this paper. 

In this light, suppose we have a probability distribution, $ \p = \{p_{i}\}_{i=1}^{n} $, over the set $\{1,2,\ldots,n\}$, such that for each index $ i =1,2\ldots,n$, we have $ \Pr(i) = p_{i} > 0 $ and $ \sum_{i=1}^{n} p_{i} = 1$. Consider picking a sample of indices from $\{1,2,\ldots,n\}$, at each iteration, randomly according to the distribution $ \p $. Let $\mathcal{S}$ and $|\mathcal{S}|$ denote the sample collection and its cardinality, respectively and define 
\begin{equation}
%\H(\x) \triangleq \frac{1}{|\mathcal{S}|} \sum_{j \in \mathcal{S}} \nabla^{2} f_{j}(\x),
\H(\x) \triangleq \frac{1}{n |\mathcal{S}| } \sum_{j \in \mathcal{S}} \frac{1}{p_{j}}\nabla^{2} f_{j}(\x),
\label{eq:subsampled_H}
\end{equation}
to be the sub-sampled Hessian.  In big-data regime when $ n \gg 1$, if $ |\mathcal S| \ll n $, such sub-sampling can offer significant computational savings. 

Now, suppose
\begin{subequations}
	\label{eq:Hessian_boundedness_finite_sum}
	\begin{align}
	\label{eq:Hessian_boundedness_finite_sum_fi}
	\sup_{\x \in \mathbb{R}^{d}} \|\nabla^2 f_i(\x)\| \le K_{i}, \quad i = 1,2,\ldots, n,
	\end{align} 
	and define
	\begin{align}
	\label{eq:Hessian_boundedness_finite_sum_K}
	K_{\max} &\triangleq \max_{i=1,\ldots,n} K_{i}.\\
    \widehat K &\triangleq \frac{1}{n}\sum_{i=1}^n K_i.\label{eq:Hessian_boundedness_finite_sum_hat_K}
    \end{align}
\end{subequations}
In this case, we can naturally consider uniform distribution over $\{1,2,\ldots,n\}$, i.e., $ p_{i} = 1/n,; \forall i $. Lemma \ref{lemma:uniform} gives the sample size required for the inexact Hessian, $ \H(\x) $, to probabilistically satisfy \eqref{eq:Hessian_approximation}, for when the indices are picked uniformly at random \textit{with} or \textit{without} replacement. 
%The proof of Lemma \ref{lemma:uniform} is in Appendix \ref{sec:sampling_proofs}.

%-------------------------------%-------------------------------
%-------------------------------Lemma: uniform
\begin{lemma}[Complexity of Uniform Sampling]%{lemma}{lemuniform}
	\label{lemma:uniform}
	Given \eqref{eq:Hessian_boundedness_finite_sum_fi}, \eqref{eq:Hessian_boundedness_finite_sum_K} , and $0 < \epsilon,\delta < 1$, 
	let
	\begin{align}
	\label{eq:uniform_sample_size}
	|\mathcal S| \ge \frac{16 K_{\max}^2}{\epsilon^2}\log\frac{2 d}{\delta},
	\end{align}
	where $ K_{\max} $ is defined as in \eqref{eq:Hessian_boundedness_finite_sum_K}. At any $\x \in \mathbb{R}^{d}$, suppose picking the elements of $ \mathcal{S} $ uniformly at random with or without replacement, and forming $\H(\x)$ as in \eqref{eq:subsampled_H} with $ p_{i} = 1/n,; \forall i $. We have
	\begin{align}
	\label{eq:uniform_prob}
	\Pr\Big( \|\H(\x) - \nabla^{2} F(\x)\| \leq \epsilon \Big) \geq 1-\delta.
	\end{align}
\end{lemma}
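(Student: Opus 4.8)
The plan is to recognize this as a standard matrix concentration statement and prove it via a matrix Hoeffding-type (equivalently operator-Bernstein) inequality applied to a sum of independent, centered, bounded Hermitian matrices. First I would simplify the estimator: with $p_i = 1/n$, the definition \eqref{eq:subsampled_H} collapses to the plain empirical average $\H(\x) = \frac{1}{|\mathcal{S}|}\sum_{j\in\mathcal{S}}\nabla^2 f_j(\x)$, and since each index is drawn uniformly, $\mathbb{E}[\nabla^2 f_{i}(\x)] = \frac1n\sum_{i=1}^n \nabla^2 f_i(\x) = \nabla^2 F(\x)$, so $\H(\x)$ is an unbiased estimator of the true Hessian. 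Writing $m = |\mathcal{S}|$ and introducing the centered summands $\X_j \triangleq \frac1m\big(\nabla^2 f_{i_j}(\x) - \nabla^2 F(\x)\big)$, I have $\H(\x) - \nabla^2 F(\x) = \sum_{j=1}^m \X_j$, a sum of independent (in the with-replacement case), zero-mean, symmetric $d\times d$ matrices.

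The key quantitative inputs are the almost-sure and variance bounds on each centered term. From \eqref{eq:Hessian_boundedness_finite_sum_fi}--\eqref{eq:Hessian_boundedness_finite_sum_K}, $\|\nabla^2 f_i(\x)\| \le K_{\max}$ for every $i$, whence $\|\nabla^2 F(\x)\| \le K_{\max}$ as well, so $\|\nabla^2 f_{i_j}(\x) - \nabla^2 F(\x)\| \le 2K_{\max}$ and thus $\|\X_j\| \le 2K_{\max}/m$, while $\|\mathbb{E}[(\nabla^2 f_{i_j}(\x) - \nabla^2 F(\x))^2]\| \le K_{\max}^2$. Feeding these into a matrix concentration tail bound for the spectral norm of a sum of independent centered bounded Hermitian matrices produces an estimate of the form $\Pr\big(\|\H(\x) - \nabla^2 F(\x)\| \ge \epsilon\big) \le 2d\exp\!\big(-m\epsilon^2/(16K_{\max}^2)\big)$, where the leading factor $2d$ comes from union-bounding over $\lambda_{\max}(\cdot)$ and $-\lambda_{\min}(\cdot)$ (equivalently over $\pm\sum_j\X_j$). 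Equating the right-hand side to $\delta$ and solving for $m$ reproduces exactly the stated threshold \eqref{eq:uniform_sample_size}, establishing \eqref{eq:uniform_prob} in the with-replacement case; the constant-chasing here is routine and I would not belabor it.

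The main obstacle is the \emph{without-replacement} case, where $i_1,\ldots,i_m$ are no longer independent and the summands are only exchangeable, so the matrix moment-generating-function argument underlying the Hoeffding/Bernstein bound does not apply verbatim. The cleanest way around this is to invoke the reduction of Gross and Nesme (a matrix analogue of Hoeffding's classical comparison for sampling from a finite population), which shows that the matrix Laplace transform of a sum sampled without replacement is dominated by that of the corresponding with-replacement sum. Since the tail estimate above is derived purely from a Laplace-transform bound, this domination lets the identical inequality, and hence the identical sample-size threshold, carry over unchanged. I would therefore phrase the concentration step in its Laplace-transform-dominated form so that both sampling schemes are dispatched by a single argument; the only care needed is to confirm that the almost-sure bound $\|\X_j\|\le 2K_{\max}/m$ and the dimension factor $2d$ are insensitive to the sampling scheme, which they are.
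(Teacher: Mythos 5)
Your proposal is correct and follows essentially the same route as the paper's proof: center the uniformly sub-sampled Hessian, use the a priori bound $\|\nabla^2 f_i(\x)\|\le K_{\max}$ to get $\|\X_j\|\le 2K_{\max}$, and apply the operator-Bernstein inequality of Gross--Nesme to obtain the tail bound $2d\exp\left(-\epsilon^2|\mathcal{S}|/(16K_{\max}^2)\right)$, which yields \eqref{eq:uniform_sample_size}. The only difference is presentational: the paper handles the without-replacement case implicitly by citing the Gross--Nesme theorem (whose purpose is precisely that extension), whereas you spell out the Laplace-transform domination argument explicitly.
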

%-------------------------------Lemma: uniform
\begin{proof}
	Consider $|\mathcal{S}|$ random matrices $\H_{j}(\x), j=1,\ldots,|\mathcal{S}|$ s.t.\ $\Pr \left(\H_{j}(\x) = \nabla^{2} f_{i}(\x)\right) = {1}/{n}; \; \forall i = 1,2,\ldots,n$. Define $\X_{j} \triangleq  \big( \H_{j}  - \nabla^{2}F(\x)\big) $, $\H \triangleq   \sum_{j \in \mathcal{S}} \H_{j}/|\mathcal{S}|$, and $\X \triangleq \sum_{j \in \mathcal{S}} \X_{j} = |\mathcal{S}| \left( \H - \nabla^{2}F(\x) \right)$. Note that $\Ex(\X_{j}) = 0$ and for $\H_{j} = \nabla^{2} f_{1}(\x)$ we have
	\begin{equation*}
	\|\X_{j}\|^{2} = \| \frac{n-1}{n} \nabla^{2} f_{1}(\x) - \sum_{i=2}^{n} \frac{1}{n} \nabla^{2} f_{i}(\x) \|^{2}\leq 4 (\frac{n-1}{n})^{2} K_{\max}^{2} \leq 4 K_{\max}^{2}.
	\end{equation*}
	Hence, we can apply Operator-Bernstein inequality \cite[Theorem 1]{gross2010note} to get
	\begin{align*}
	\Pr\Big(\|\H - \nabla^{2}F(\x) \| \geq \epsilon \Big) &= \Pr\Big(\|\X\| \geq \epsilon |\mathcal{S}| \Big)\leq 2d \exp\{-\epsilon^{2} |\mathcal{S}| /(16 K_{\max}^{2})\}.
	\end{align*}
	Now \eqref{eq:uniform_sample_size} ensure that $2 d \exp\{-\epsilon^{2} |\mathcal{S}| /(16 K_{\max}^{2})\} \leq \delta$,
	which gives \eqref{eq:uniform_prob}.
\qed
\end{proof}

Indeed, if \eqref{eq:uniform_prob} holds, then \eqref{eq:Hessian_accuracy_H} follows with the same probability. In addition, if $ H $ is constructed according to Lemma \ref{lemma:uniform}, it is easy to see that \eqref{eq:Hessian_boundedness_H} is satisfied with $ K_{H} = K_{\max} $ (in fact this is a deterministic statement). These two, together, imply that $ H $ satisfies Condition \ref{condition:Hessian_approximation_H}, with probability $ 1-\delta $.

\paragraph{A Special Case:} In certain settings, one might be able to construct a more ``informative'' distribution, $ \p $, over the indices in the set $\{1,2,\ldots,n\}$, as opposed to oblivious uniform sampling. In particular, it might be advantageous to bias the probability distribution towards picking indices corresponding to those $ f_{i} $'s which are more \emph{relevant}, in certain sense, in forming the Hessian. If this is possible, then we can only expect to require smaller sample size as compared with oblivious uniform sampling. One such setting where this is possible is the finite-sum optimization of the form \eqref{eq:obj_sum_ERM}, 
%\begin{align}
%%\label{eq:obj_sum_ERM}
%\min_{\x\in\bbR^d} F(\x)  \triangleq \frac{1}{n} \sum_{i=1}^n f_i(\a_i^T\x), \tag{{\bf P2}}
%\end{align}
%for some given data vectors $ \{\a_{i}\}_{i=1}^{n} \subset \mathbb{R}^{d} $. 
%Finite-sum problems of the form \eqref{eq:obj_sum_ERM}, 
which is indeed a special case of \eqref{eq:obj_sum} and arise often in many machine learning problems \cite{shalev2014understanding}.%, e.g., non-linear least squares arising from logistic regression with least squares loss as in \cite[Example 4.2]{xuNonconvexEmpirical2017}. 

It is easy to see that, the Hessian of $ F $ in this case can be written as
$\nabla^2 F(\x) = \A^T \B \A = \sum_{i=1}^n f_i''(\a_i^T\x)\a_i\a_i^T/n$, where
\begin{align*}
\A^{T} = \begin{bmatrix}
\mid & \mid & \dots & \mid \\
\a_{1} & \a_{2} & \dots & \a_{n}\\
\mid & \mid & \dots & \mid \\
\end{bmatrix}_{d \times n} \text{ and } \;
\B = \frac{1}{n}\begin{bmatrix}
f_1''(\a_1^T\x) & 0 & \dots  & 0 \\
0 & f_2''(\a_2^T\x) & \dots  & 0 \\
\vdots & \vdots & \ddots & \vdots \\
0 & 0 & \dots & f_n''(\a_n^T\x)
\end{bmatrix}_{n \times n}.
%B = \diag\{f_1''(\a_1^T\x), f_2''(\a_2^T\x)\cdots f_n''(\a_n^T\x)\}.
\end{align*}
Now let $ \S \in \mathbb{R}^{n \times |\mathcal{S}|} $ be the sampling matrix and define the approximate Hessian as $\H \triangleq \A^T\S\S^T\B\A$. It can be seen that approximating the Hessian matrix $\nabla^2 F(\x) = \A^T \B \A$ can be regarded as approximating matrix-matrix multiplication from RandNLA \cite{mahoney2011randomized,woodruff2014sketching}.
For this, consider the sampling distribution $ \p $ as 
\begin{align}
\label{eq:nonuniform_sampling_distribution}
p_i = \frac{|f_i''(\a_i^T\x)|\|\a_i\|_2^2}{\sum_{j=1}^n |f_j''(\a_j^T\x)|\|\a_j\|_2^2}.
\end{align}
Note that the absolute values are needed since for non-convex $f_i$, we might have $ f_j''(\a_j^T\x) < 0 $ (for the convex case where all $ f_j''(\a_j^T\x) \geq 0 $, one can obtain stronger guarantees than Lemmas \ref{lemma:uniform} and \ref{lemma:nonuniform}; see \cite{xu2016sub}). Using non-uniform sampling distribution \eqref{eq:nonuniform_sampling_distribution}, Lemma \ref{lemma:nonuniform} gives sampling complexity for the approximate Hessian of \eqref{eq:obj_sum_ERM} to, probabilistically, satisfy \eqref{eq:Hessian_approximation}. 
%The proof of Lemma \ref{lemma:nonuniform} is in Appendix \ref{sec:sampling_proofs}.

%-------------------------------%-------------------------------
%-------------------------------Lemma: non-uniform
\begin{lemma}[Complexity of Non-Uniform Sampling]%{lemma}{lemmatrix}
	\label{lemma:nonuniform}
    Given \eqref{eq:Hessian_boundedness_finite_sum_fi}, \eqref{eq:Hessian_boundedness_finite_sum_hat_K} and $0 < \epsilon, \delta < 1$, let 
	\begin{align}
	\label{eq:nonuniform_sample_size}
	|\mathcal S| \ge \frac{4 \widehat{K}^2}{\epsilon^2}\log\frac{2 d}{\delta},
	\end{align}
	where $\widehat K$ is defined as in \eqref{eq:Hessian_boundedness_finite_sum_hat_K}. 
	At any $\x \in \mathbb{R}^{d}$, suppose picking the elements of $ \mathcal{S} $ randomly according to the probability distribution \eqref{eq:nonuniform_sampling_distribution}, and forming $\H(\x)$ as in \eqref{eq:subsampled_H}. We have
	\begin{align}
	\label{eq:nonuniform_prob}
	\Pr\Big( \|\H - \nabla^{2} F(\x)\| \leq \epsilon \Big) \geq 1-\delta.
	\end{align}
\end{lemma}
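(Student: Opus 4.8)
The plan is to mirror the proof of Lemma~\ref{lemma:uniform}, but to replace the crude almost-sure bound with a matrix-variance estimate that exploits the specific form of the sampling probabilities in \eqref{eq:nonuniform_sampling_distribution}. Since $\nabla^2 f_i(\x) = f_i''(\a_i^T\x)\,\a_i\a_i^T$, I would first introduce $|\mathcal{S}|$ i.i.d.\ random matrices $\H_j$, one per sampled index, with $\Pr\big(\H_j = \tfrac{1}{np_i}\nabla^2 f_i(\x)\big) = p_i$, so that $\H(\x) = \tfrac{1}{|\mathcal{S}|}\sum_{j}\H_j$ is exactly \eqref{eq:subsampled_H}. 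A one-line computation gives $\Ex[\H_j] = \sum_i p_i\,\tfrac{1}{np_i}\nabla^2 f_i(\x) = \nabla^2 F(\x)$, so $\X_j \triangleq \H_j - \nabla^2 F(\x)$ are zero-mean and $\X \triangleq \sum_j \X_j = |\mathcal{S}|\big(\H(\x) - \nabla^2 F(\x)\big)$.

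The crucial step is to see how \eqref{eq:nonuniform_sampling_distribution} balances the summands. Writing $\Theta \triangleq \sum_{k=1}^n |f_k''(\a_k^T\x)|\,\|\a_k\|^2$, the choice $p_i = |f_i''(\a_i^T\x)|\|\a_i\|^2/\Theta$ produces the index-independent identity $\|\H_j\| = \tfrac{1}{np_i}\|\nabla^2 f_i(\x)\| = \tfrac{1}{np_i}|f_i''(\a_i^T\x)|\|\a_i\|^2 = \Theta/n$. Since $\|\nabla^2 f_i(\x)\| = |f_i''(\a_i^T\x)|\|\a_i\|^2 \le K_i$ by \eqref{eq:Hessian_boundedness_finite_sum_fi}, we get $\Theta \le \sum_i K_i = n\widehat{K}$, hence $\|\H_j\| \le \widehat{K}$ almost surely. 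More importantly, the same cancellation yields $\Ex[\H_j^2] = \tfrac{\Theta}{n^2}\sum_i |f_i''(\a_i^T\x)|\,\a_i\a_i^T$, whose spectral norm is, by sub-additivity, at most $\tfrac{\Theta}{n^2}\sum_i |f_i''(\a_i^T\x)|\|\a_i\|^2 = \Theta^2/n^2 \le \widehat{K}^2$. As $\Ex[\X_j^2] = \Ex[\H_j^2] - (\nabla^2 F(\x))^2 \preceq \Ex[\H_j^2]$ with both sides positive semi-definite, the matrix-variance parameter of $\X$ is at most $|\mathcal{S}|\,\widehat{K}^2$.

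With these bounds I would invoke the operator-Bernstein inequality of \cite{gross2010note} exactly as in Lemma~\ref{lemma:uniform}, obtaining $\Pr\big(\|\X\| \ge \epsilon|\mathcal{S}|\big) \le 2d\exp\big(-\epsilon^2 |\mathcal{S}|/(4\widehat{K}^2)\big)$; the prescribed sample size \eqref{eq:nonuniform_sample_size} then forces the right-hand side below $\delta$, which is \eqref{eq:nonuniform_prob}.

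I expect the only substantive departure from the uniform case — and the main obstacle — to be the variance computation: one must verify the algebraic cancellation that makes $\tfrac{1}{np_i}\nabla^2 f_i(\x)$ have index-independent norm, and then pass from $\|\Ex[\H_j^2]\|$ to $\Theta$ without discarding the averaging gain, so that $\widehat{K}$ rather than $K_{\max}$ governs the sample complexity. As in the uniform proof, minor care is needed regarding the admissible range of the deviation in the Bernstein bound, but this is a standard technicality.
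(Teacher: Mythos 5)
Your proposal is correct and follows essentially the same route as the paper's own proof: the same i.i.d.\ decomposition with $\Pr\big(\H_j = \tfrac{1}{np_i}\nabla^2 f_i(\x)\big) = p_i$, the same variance computation (your $\tfrac{\Theta}{n^2}\sum_i |f_i''(\a_i^T\x)|\,\a_i\a_i^T$ is exactly the paper's $c\,\A^T|\B|\A$ with $c = \Theta/n$), and the same application of the Gross--Nesme operator-Bernstein inequality with the bound $c\,\|\A^T|\B|\A\| \le \widehat{K}^2$ yielding $2d\exp\big(-\epsilon^2|\mathcal{S}|/(4\widehat{K}^2)\big) \le \delta$. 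The one loose end you flag, the admissible-deviation range of the Bernstein bound, is likewise left unverified in the paper, so your treatment matches it on that point as well.
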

%-------------------------------Lemma: non-uniform
\begin{proof}
Define $\B = \diag\{f_1''(\a_1^T\x)/n, \cdots, f_n''(\a_n^T\x)/n\}\in\bbR^{n\times n}$. Let $ \S \in \mathbb{R}^{n \times |\mathcal{S}|} $ be the sampling matrix and define $\H \triangleq \A^T\S\S^T\B\A$. Further, let the diagonals of $\B$ be denoted by $b_{i}$ and define
%\begin{equation*}
$c  \triangleq \sum_{i = 1}^{n} |b_{j}| \|\a_{j}\|^{2}$.
%\end{equation*}
Consider $s$ random matrices $\H_{j}$ such that
$\Pr( \H_{j} = b_{i} \a_{i} \a_{i}^{T}/p_{i} ) = p_{i}, \;\forall j = 1,2,\ldots,|\mathcal{S}|$, where $p_{i}  = {|b_{i}| \|\a_{i}\|^{2}}/({\sum_{i=1}^{n} |b_{j}| \|\a_{j}\|^{2}}).$
Define
\begin{align*}
\X_{j} \triangleq \H_{j} - \A^{T} \B \A, \quad \H \triangleq \frac{1}{|\mathcal{S}|} \sum_{j=1}^{|\mathcal{S}|} \H_{j}, \quad \X \triangleq \sum_{j=1}^{|\mathcal{S}|} \X_{j} = |\mathcal{S}| \left( \H - \A^{T} \B \A\right).
\end{align*}
Note that $\bbE[\X_{j}] = \sum_{i=1}^{n} p_{i}\left(b_{i} \a_{i} \a_{i}^{T}/p_{i}  - \A^{T} \B \A \right) = 0$, and
\begin{align*}
\bbE[\X_{j}^{2}]&= \bbE [ \H_{j} - \A^{T} \B \A] = \bbE [\H_{j}^{2}] + (\A^{T} \B \A)^{2} - \bbE[\H_{j}] \A^{T} \B \A - \A^{T} \B \A \bbE[\H_{j}] \\
&= \bbE[\H_{j}^{2}] - (\A^{T} \B \A)^{2} \preceq \bbE [\H_{j}^{2}]  = \sum_{i=1}^{n} p_{i} \left(\frac{b_{i}}{p_{i}} \a_{i} \a_{i}^{T} \right)^{2} = \sum_{i=1}^{n} \frac{b^{2}_{i} \|\a_{i}\|^{2}}{p_{i}} \a_{i} \a_{i}^{T}  
\\&= \sum_{i=1}^{n} |b_{j}| \|\a_{j}\|^{2} \sum_{i=1}^{n} |b|_{i}  \a_{i} \a_{i}^{T} = c  \sum_{i=1}^{n} |b|_{i}  \a_{i} \a_{i}^{T} = c\A^{T} |\B|\A.
\end{align*}
So we have
%\begin{equation*}
$\|\bbE[\X_{j}^{2}]\| \leq c  \|\A^{T} |\B| \A\|$.
%\end{equation*}
Now we can apply the  Operator-Bernstein inequality \cite[Theorem 1]{gross2010note} to get
$$
\Pr\left( \|\H - \A^{T} \B \A\|_{2} \geq \epsilon \right) \leq \Pr\left( \|\X\|_{2} \geq \epsilon |\mathcal{S}| \right) \leq 2 d e^{\epsilon^{2} |\mathcal{S}| /(4 c  \|\A^{T} |\B| \A\|)}.
$$
Since 
$
c = \sum_{i=1}^n \Abs{b_{i}}\norm{\a_i}^2 = \frac{1}{n} \sum_{i=1}^n \Abs{f_i''}\norm{\a_i}^2 \le \frac{1}{n}\sum_{i=1}^n K_i = \widehat K $  
and
$$
\norm{\A^T\Abs{\B}\A} = \norm{\frac{1}{n}\sum_{i=1}^n \Abs{f_i''}\a_i\a_i^T} \le \frac{1}{n}\sum_{i=1}^n \norm{\Abs{f_i''}\a_i\a_i^T} \le \frac{1}{n}\sum_{i=1}^n K_i = \widehat{K},
$$
then we have
$$
\Prob{\|\H - \A^{T} \B \A\|_{2} \geq \epsilon} \le 2 d e^{\epsilon^2 |\mathcal{S}|/ (4\widehat{K}^2)},
$$
which gives the desired result. \qed
\end{proof}

The bound in \eqref{eq:nonuniform_sample_size} can be improved by replacing the dimension $ d $ with a smaller quantity, known as intrinsic dimension; see Appendix A. 
As it can be seen from \eqref{eq:Hessian_boundedness_finite_sum_K} and \eqref{eq:Hessian_boundedness_finite_sum_hat_K}, since $\widehat{K} \leq K_{\max}$, the sampling complexity given by Lemma \ref{lemma:nonuniform} always provides a smaller sample-size compared with that prescribed by Lemma \ref{lemma:uniform}. Indeed, the advantage of non-uniform sampling is more pronounced in cases where the distribution of $ K_{i} $'s are highly skewed, i.e., a few large ones and many small ones, in which case we can have $\widehat{K} \ll K_{\max}$; see numerical experiments in \cite{xuNonconvexEmpirical2017}.
Also, from \eqref{eq:nonuniform_prob}, it follows that the approximate matrix $ \H $, constructed according to Lemma \ref{lemma:nonuniform} satisfies \eqref{eq:Hessian_boundedness_H} with $ K_{H} = \widehat{K} + \epsilon$, with probability $ 1-\delta $, which in turn, implies that Condition \ref{condition:Hessian_approximation_H} is ensured, with probability $ 1-\delta $.

As concrete examples of the problems in the form \eqref{eq:obj_sum_ERM} where Lemma \ref{lemma:nonuniform} can be readily used, Table \ref{table_finite_sum_example} gives estimates for $ K_{i} $ in \eqref{eq:Hessian_boundedness_finite_sum_fi} for robust linear regression with smooth non-convex bi-weight loss, \cite{beaton1974fitting}, as well as non-convex binary-classification using logistic regression with least squares loss, \cite{xuNonconvexEmpirical2017}.
\begin{table}[htbp]
	\caption{Examples of problems in the form \eqref{eq:obj_sum_ERM} with the corresponding estimates for $ K_{i} $ in \eqref{eq:Hessian_boundedness_finite_sum_fi}. \label{table_finite_sum_example}}
	\centering
	\scalebox{0.9}{
		\begin{tabular}{| c | c | c | c | c |} 
			%\hline
			%\multicolumn{4}{|c|}{$G$ for GLMs with sparsity constraint} \\
			\hline& & & & \\ [-2ex]
			Problem & \parbox[t]{1cm}{\centering Data}  & $ \displaystyle f_i(\a_i^T\x) $ & $\displaystyle \nabla^{2} f_{i}(\a_i^T\x)$ & $\displaystyle  K_{i}$\\ [0.5ex] 
			\hline & & & &\\ [-1ex]
			\parbox[t]{2.1cm}{\centering Robust Linear \\ Regression}   & \parbox[t]{1cm}{\centering $\displaystyle \a_{i} \in \mathbb{R}^{d}$ \\ $b_{i} \in \mathbb{R}$} & $\displaystyle  \frac{\left(\a_i^T\x - b_{i}\right)^{2}}{1+\left(\a_i^T\x - b_{i}\right)^{2}}$ & $\displaystyle \left(\frac{2\left(1-3 \left(\a_i^T\x\right)^2\right)}{\left(\left(\a_i^T\x\right)^2 + 1\right)^3} \right) \a_{i} \a_{i}^{T}$ & $\displaystyle  \frac{\|\a_{i}\|^{2}}{6 \sqrt{3}}$\\ [5ex]
			\parbox[t]{2.1cm}{\centering Non-linear Binary  \\ Classification} & \parbox[t]{1.2cm}{\centering $\displaystyle \a_{i} \in \mathbb{R}^{d}$ \\ $b_{i} \in \left\{0,1\right\}$} & $\displaystyle \left(\frac{1}{1+\exp\left(-\a_i^T\x\right)} - b_{i}\right)^{2}$ & $\displaystyle \left(\frac{\exp\left(\a_i^T\x\right)\left(1-\exp\left(\a_i^T\x\right)\right)}{\left(\exp\left(\a_i^T\x\right) + 1\right)^3} \right) \a_{i} \a_{i}^{T}$ & $\displaystyle  2 \|\a_{i}\|^{2}$\\ [3ex]
			\hline
	\end{tabular}}

\end{table}

%\begin{table}[htbp]
%	\caption{Examples of problems in the form \eqref{eq:obj_sum_ERM} with the corresponding estimates for $ K_{i} $ in \eqref{eq:Hessian_boundedness_finite_sum_fi}. \label{table_finite_sum_example}}
%	\centering
%	\scalebox{1}{
%		\begin{tabular}{|c | c | c |} 
%			%\hline
%			%\multicolumn{4}{|c|}{$G$ for GLMs with sparsity constraint} \\
%			\hline & & \\ [-2ex]
%			& Robust Linear Regression & Non-linear Binary Classification\\ [0.5ex] 
%			\hline & & \\ [-1ex]
%			Data  & $\displaystyle \left\{ \a_{i},b_{i} \right\}_{i=1}^{n} \subset \mathbb{R}^{d} \times \mathbb{R}$ & $\displaystyle \left\{ \a_{i},b_{i} \right\}_{i=1}^{n} \subset \mathbb{R}^{d} \times \left\{0,1\right\}$ \\ [2ex]
%			$\displaystyle f_i(\a_i^T\x)$ & $\displaystyle  \frac{\left(\a_i^T\x - b_{i}\right)^{2}}{1+\left(\a_i^T\x - b_{i}\right)^{2}}$ & $\displaystyle \left(\frac{1}{1+\exp\left(-\a_i^T\x\right)} - b_{i}\right)^{2}$ \\ [3ex]
%			$\displaystyle \nabla^{2} f_{i}(\a_i^T\x)$ & $\displaystyle \left(\frac{2\left(1-3 \left(\a_i^T\x\right)^2\right)}{\left(\left(\a_i^T\x\right)^2 + 1\right)^3} \right) \a_{i} \a_{i}^{T}$ & $\displaystyle \left(\frac{\exp\left(\a_i^T\x\right)\left(1-\exp\left(\a_i^T\x\right)\right)}{\left(\exp\left(\a_i^T\x\right) + 1\right)^3} \right) \a_{i} \a_{i}^{T}$  \\ [4ex]
%			$\displaystyle  K_{i}$ & $\displaystyle  \frac{\|\a_{i}\|^{2}}{6 \sqrt{3}}$ & $\displaystyle  2 \|\a_{i}\|^{2}$  \\ [2ex] 
%			%Reference & \cite{beaton1974fitting} & \cite{xuNonconvexEmpirical2017}  \\ [1ex] 
%			\hline
%	\end{tabular}}
%	
%\end{table}

\subsection{Probabilistic Convergence Analysis}
\label{sec:probabilistic_convergence}
Now, we are in the position to give iteration complexity for Algorithms \ref{alg:STR_fg} and \ref{alg:SARC_fg} where the inexact Hessian matrix $ \H_{t} $ is constructed according to Lemmas \ref{lemma:uniform} or \ref{lemma:nonuniform}. Since the approximation is a probabilistic construction, in order to guarantee success, we need to ensure that we require a small failure probability across all iterations. In particular, in order to get an overall and accumulative success probability of $ 1 - \delta $ for the entire $ T  $ iterations, the per-iteration failure probability is set as $(1- \sqrt[T]{(1-\delta)} )\in \mathcal{O}(\delta/T)$. This failure probability appears only in the ``log factor'' for sample size in all of our results, and so it is not the dominating cost. Hence, requiring that all $ T $ iterations are successful for a large $ T $, only necessitates a small (logarithmic) increase in the sample size. For example, for $ T \in \bigO (\max\{\epsilon_g^{-2}, \epsilon_H^{-3}\})$, as in Theorem \ref{theorem:SARC_main_det}, we can set the per-iteration failure probability to $ \delta \min\{\epsilon_g^{2}, \epsilon_H^{3}\} $, and ensure that when Algorithm \ref{alg:SARC_fg} terminates, all Hessian approximations have been, accumulatively, successful with probability of $ 1-\delta $. 
%As a result, we can use the sub-sampling Lemmas \ref{lemma:uniform} and \ref{lemma:nonuniform} and obtain an approximating matrix which, with high probability, guarantees \eqref{eq:Hessian_approximation}. 

Using these results, we can have the following probabilistic, but optimal, guarantee on the worst-case iteration complexity of Algorithm \ref{alg:STR_fg} for solving finite-sum problem \eqref{eq:obj_sum} (or \eqref{eq:obj_sum_ERM}) and in the case where the inexact Hessian is formed by sub-sampling. Their proofs follow very similar line of reasoning as that used for obtaining the results of Section \ref{sec:convergence_analysis}, and hence are omitted.

%-------------------------------%-------------------------------
\begin{theorem}[Optimal Complexity of Algorithm \ref{alg:STR_fg} For Finite-Sum Problem]%{theorem}{theoremSTR_prob}
	\label{theorem:STR_main_prob}
	Consider any $ 0< \epsilon_{g},\epsilon_{H}, \delta < 1 $. Let $ \epsilon $ be as in \eqref{eq:STR_epsilon} and set $ \delta_{0} = \delta \min\{\epsilon_g^{2} \epsilon_H, \epsilon_H^{3}\}$. 
	Furthermore, for such $ (\epsilon,\delta_{0}) $, let the sample-size $ |\mathcal{S}| $ be as in \eqref{eq:uniform_sample_size} (or \eqref{eq:nonuniform_sample_size}) and form the sub-sampled matrix $ \H $ as in \eqref{eq:subsampled_H}. 
	For Problem \eqref{eq:obj_sum} (or \eqref{eq:obj_sum_ERM}), under Assumption \ref{assumption:a1} and Condition \ref{condition:STC_sufficient_descent}, Algorithm \ref{alg:STR_fg} terminates in  at most $T \in \bigO(\max\{\epsilon_g^{-2} \epsilon_H^{-1}, \epsilon_H^{-3}\})$
%	\end{align*}
	iterations, upon which, with probability $ 1-\delta $, we have that $\|\nabla F(\x)\| \le \epsilon_g$, and $\lambda_{\min} (\nabla^2 F(\x)) \ge -\left(\epsilon + \epsilon_H\right)$.
%	\begin{align*}
%	\|\nabla F(\x)\| \le \epsilon_g, \quad \text{and} \quad \lambda_{\min} (\nabla^2 F(\x)) \ge -\left(\epsilon + \epsilon_H\right).
%	\end{align*}
\end{theorem}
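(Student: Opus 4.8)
The plan is to reduce this probabilistic statement to the deterministic complexity result of Theorem~\ref{theorem:STR_main_det} by conditioning on the event that the sub-sampled Hessian satisfies Condition~\ref{condition:Hessian_approximation_H} at every iteration, and then to control the probability of this event by a union bound. First I would invoke Theorem~\ref{theorem:STR_main_det} to obtain a \emph{deterministic} a priori horizon $T^{\star} \in \bigO(\max\{\epsilon_g^{-2}\epsilon_H^{-1}, \epsilon_H^{-3}\})$ on the number of iterations: whenever Condition~\ref{condition:Hessian_approximation_H} holds with tolerance $\epsilon_t \le \max\{\epsilon_0, \Delta_t\}$ at each step, Algorithm~\ref{alg:STR_fg} is guaranteed to terminate within $T^{\star}$ iterations. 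This fixed $T^{\star}$ is what resolves the apparent circularity that the actual stopping time is itself random, and it is the number of events over which the union bound will be taken.

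Next I would use Lemma~\ref{lemma:uniform} (respectively Lemma~\ref{lemma:nonuniform}) with tolerance $\epsilon$ and per-iteration failure probability $\delta_0$: with the sample size prescribed in \eqref{eq:uniform_sample_size} (respectively \eqref{eq:nonuniform_sample_size}), at a fixed iterate the sub-sampled Hessian $\H_t$ from \eqref{eq:subsampled_H} satisfies the stronger bound $\|\H_t - \nabla^2 F(\x_t)\| \le \epsilon$ with probability at least $1-\delta_0$. Since \eqref{eq:Hessian_accuracy_H2} implies \eqref{eq:Hessian_accuracy_H}, and the boundedness \eqref{eq:Hessian_boundedness_H} holds as well (with $K_H = K_{\max}$ deterministically in the uniform case, and $K_H = \widehat{K} + \epsilon$ on the same high-probability event in the non-uniform case), Condition~\ref{condition:Hessian_approximation_H} is met at that iteration with probability at least $1-\delta_0$.

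Let $\mathcal{E}$ denote the event that Condition~\ref{condition:Hessian_approximation_H} holds simultaneously for all $t = 0, 1, \ldots, T^{\star}-1$. A union bound gives $\Pr(\mathcal{E}^c) \le T^{\star} \delta_0$, and the choice $\delta_0 = \delta \min\{\epsilon_g^2 \epsilon_H, \epsilon_H^3\}$ together with $T^{\star} \in \bigO(1/\min\{\epsilon_g^2 \epsilon_H, \epsilon_H^3\})$ yields $\Pr(\mathcal{E}^c) \in \bigO(\delta)$, so that $\Pr(\mathcal{E}) \ge 1-\delta$ after absorbing the constant from the $\bigO$ bound into $\delta_0$ (this affects only the log factor in the sample size). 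On the event $\mathcal{E}$, Theorem~\ref{theorem:STR_main_det} applies verbatim, so the algorithm terminates within $T^{\star}$ iterations at a point $\x_T$ satisfying the stopping test $\|\nabla F(\x_T)\| \le \epsilon_g$ and $\lmin(\H_T) \ge -\epsilon_H$.

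Finally, to convert the guarantee on $\H_T$ into one on the true Hessian, I would apply Weyl's inequality: since $\|\H_T - \nabla^2 F(\x_T)\| \le \epsilon$ on $\mathcal{E}$, we obtain $\lmin(\nabla^2 F(\x_T)) \ge \lmin(\H_T) - \|\H_T - \nabla^2 F(\x_T)\| \ge -(\epsilon + \epsilon_H)$, which is precisely the claimed second-order guarantee, holding with probability $1-\delta$. The main obstacle is the bookkeeping of the union bound, namely legitimizing the use of the fixed horizon $T^{\star}$ despite the random stopping time; the key observation is that any failure must occur within the first $T^{\star}$ iterations, since conditioned on all of those being accurate, termination is forced by Theorem~\ref{theorem:STR_main_det}, so no additional iterations—and hence no additional failure probability—can arise.
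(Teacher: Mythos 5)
Your proposal is correct and follows essentially the same route as the paper: the paper omits a formal proof but prescribes exactly this reduction in Section~\ref{sec:probabilistic_convergence} — condition on the event that the sub-sampled Hessian satisfies Condition~\ref{condition:Hessian_approximation_H} at every iteration of the a priori deterministic horizon from Theorem~\ref{theorem:STR_main_det}, with per-iteration failure probability $\delta_0 = \delta\min\{\epsilon_g^2\epsilon_H,\epsilon_H^3\}$ chosen so the accumulated failure probability is $\mathcal{O}(\delta)$, absorbing constants into the logarithmic factor of the sample size. The only cosmetic difference is that the paper accounts for accumulation multiplicatively via a per-iteration failure probability $1-\sqrt[T]{1-\delta}\in\mathcal{O}(\delta/T)$ whereas you use an additive union bound; your resolution of the random stopping time through the fixed horizon, and the final use of Weyl's inequality to pass from $\lambda_{\min}(\H_T)\ge-\epsilon_H$ to $\lambda_{\min}(\nabla^2 F(\x_T))\ge-(\epsilon+\epsilon_H)$, match the paper's remarks following Theorem~\ref{theorem:STR_main_det}.
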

%-------------------------------%-------------------------------
%-------------------------------%-------------------------------
Similarly, in the setting of optimization problems \eqref{eq:obj_sum} and \eqref{eq:obj_sum_ERM}, with appropriate sub-sampling of the Hessian as in Lemmas \ref{lemma:uniform} and \ref{lemma:nonuniform}, we can also obtain probabilistic worst-case iteration complexities for Algorithm \ref{alg:SARC_fg} as in the deterministic case. Again, the proofs are similar to those in Section \ref{sec:convergence_analysis}, and hence are omitted. 

\begin{theorem}[Complexity of Algorithm \ref{alg:SARC_fg} For Finite-Sum Problem]%{theorem}{theoremSARC_prob}
	\label{theorem:SARC_main_prob}
	Consider any $ 0< \epsilon_{g},\epsilon_{H},\delta < 1 $. Let $ \epsilon $ be as in \eqref{eq:SARC_epsilon} and set $ \delta_{0} = \delta \min\{\epsilon_g^{2}, \epsilon_H^{3}\}$. Furthermore, for such $ (\epsilon,\delta_{0}) $, let the sample-size $ |\mathcal{S}| $ be as in \eqref{eq:uniform_sample_size} (or \eqref{eq:nonuniform_sample_size}) and form the sub-sampled matrix $ \H $ as in \eqref{eq:subsampled_H}. 
	For Problem \eqref{eq:obj_sum} (or \eqref{eq:obj_sum_ERM}), under Assumption \ref{assumption:a1} and Condition \ref{condition:SARC_sufficient_descent}, Algorithm \ref{alg:SARC_fg} terminates in at most  $T \in \bigO(\max\{\epsilon_g^{-2}, \epsilon_H^{-3}\})$	iterations, upon which, with probability $ 1-\delta $, we have that $\|\nabla F(\x)\| \le \epsilon_g$, and $\lambda_{\min} (\nabla^2 F(\x)) \ge -\left(\epsilon + \epsilon_H\right)$.
%	\begin{align*}
%	\|\nabla F(\x)\| \le \epsilon_g, \quad \text{and} \quad \lambda_{\min} (\nabla^2 F(\x)) \ge -\left(\epsilon + \epsilon_H\right).
%	\end{align*}
\end{theorem}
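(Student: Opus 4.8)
The plan is to combine the deterministic complexity result of Theorem \ref{theorem:SARC_main_det} with the per-iteration sampling guarantees of Lemmas \ref{lemma:uniform} and \ref{lemma:nonuniform}, controlling the accumulation of failure probability across iterations via a union bound. First, observe that with the prescribed sample size (\eqref{eq:uniform_sample_size} or \eqref{eq:nonuniform_sample_size}) evaluated at tolerance $\epsilon$ and failure probability $\delta_0$, Lemma \ref{lemma:uniform} (resp.\ Lemma \ref{lemma:nonuniform}) guarantees that at any \emph{fixed} point $\x$ the sub-sampled Hessian \eqref{eq:subsampled_H} satisfies $\|\H(\x) - \nabla^2 F(\x)\| \leq \epsilon$ with probability at least $1-\delta_0$. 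As noted after \eqref{eq:Hessian_accuracy_H2}, this implies \eqref{eq:Hessian_accuracy_H}, and together with the deterministic boundedness established after each lemma ($K_H = K_{\max}$ or $K_H = \widehat{K}+\epsilon$), it yields Condition \ref{condition:Hessian_approximation_H} at that point with probability at least $1-\delta_0$.

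The main subtlety is that both the number of executed iterations and the iterates $\x_t$ themselves are random, since each depends on the previously realized Hessian approximations, whereas the sampling lemmas are stated only for a fixed point. To handle this adaptivity I would condition on the history up to iteration $t$: given the past (which determines $\x_t$), the sub-sample drawn at iteration $t$ is fresh, so the fixed-point guarantee applies conditionally, and the conditional probability that Condition \ref{condition:Hessian_approximation_H} fails at iteration $t$ is at most $\delta_0$. Let $T_{\max} \in \bigO(\max\{\epsilon_g^{-2}, \epsilon_H^{-3}\})$ be the deterministic iteration bound furnished by Theorem \ref{theorem:SARC_main_det}, and let $E$ be the event that Condition \ref{condition:Hessian_approximation_H} holds at every iteration $t = 0,1,\ldots,T_{\max}-1$. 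Chaining the conditional bounds through the tower property gives $\Pr(E) \geq (1-\delta_0)^{T_{\max}} \geq 1 - T_{\max}\delta_0$. With the choice $\delta_0 = \delta\min\{\epsilon_g^2,\epsilon_H^3\}$ and $T_{\max} \in \bigO(\max\{\epsilon_g^{-2},\epsilon_H^{-3}\})$, the product satisfies $T_{\max}\delta_0 \in \bigO(\delta)$, so after absorbing constants into the sample size we obtain $\Pr(E) \geq 1-\delta$. Crucially, $\delta_0$ enters \eqref{eq:uniform_sample_size}/\eqref{eq:nonuniform_sample_size} only through $\log(2d/\delta_0) = \log(2d/\delta) + \log(1/\min\{\epsilon_g^2,\epsilon_H^3\})$, so this stringent per-iteration tolerance inflates the required sample size by at most a logarithmic factor in $\epsilon_g,\epsilon_H$.

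On the event $E$, every executed iteration satisfies Condition \ref{condition:Hessian_approximation_H}, so the entire deterministic argument behind Theorem \ref{theorem:SARC_main_det} applies verbatim along the realized trajectory; the algorithm therefore terminates within $T_{\max}$ iterations, establishing the stated complexity. For the optimality guarantee, upon termination the stopping test gives $\|\nabla F(\x_T)\| \leq \epsilon_g$ (the gradient is exact in Algorithm \ref{alg:SARC_fg}) and $\lambda_{\min}(\H_T) \geq -\epsilon_H$. Since $E$ ensures $\|\H_T - \nabla^2 F(\x_T)\| \leq \epsilon$, Weyl's inequality yields $\lambda_{\min}(\nabla^2 F(\x_T)) \geq \lambda_{\min}(\H_T) - \epsilon \geq -(\epsilon_H + \epsilon)$, which is exactly the claimed $(\epsilon_g,\epsilon+\epsilon_H)$-optimality. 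The only place requiring care beyond invoking the deterministic theorem is the interplay between the adaptively chosen random iterates and the fixed-point sampling guarantee; the conditioning-on-history argument above is precisely what legitimizes the union bound despite this adaptivity.
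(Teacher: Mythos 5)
Your proposal is correct and follows essentially the same route the paper intends: the paper omits the detailed proof, but its Section \ref{sec:probabilistic_convergence} preamble prescribes exactly your accounting --- per-iteration failure probability $\delta_0 = \delta\min\{\epsilon_g^2,\epsilon_H^3\}$ from Lemmas \ref{lemma:uniform}/\ref{lemma:nonuniform}, a union bound over the $T \in \bigO(\max\{\epsilon_g^{-2},\epsilon_H^{-3}\})$ iterations of Theorem \ref{theorem:SARC_main_det}, and the observation that $\delta_0$ only enters the sample size logarithmically. Your explicit conditioning-on-history argument to justify applying the fixed-point sampling guarantee along the random trajectory, and the Weyl-inequality step for the final $(\epsilon_g,\epsilon+\epsilon_H)$-optimality, make rigorous details the paper leaves implicit.
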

%-------------------------------%-------------------------------
%-------------------------------%-------------------------------
\begin{theorem}[Optimal Complexity of Algorithm \ref{alg:SARC_fg} For Finite-Sum Problem]%{theorem}{theoremSARCOptimal_prob}
	\label{theorem:SARC_main_prob_optimal}
	Consider any $ 0< \epsilon_{g},\epsilon_{H},\delta < 1 $. Let $ \epsilon $ be as in Theorem \ref{theorem:SARC_main_det_optimal} and set $ \delta_{0} = \delta \min\{\epsilon_g^{3/2}, \epsilon_H^{3}\}$. 
	Furthermore, for such $ (\epsilon,\delta_{0}) $, let the sample-size $ |\mathcal{S}| $ be as in \eqref{eq:uniform_sample_size} (or \eqref{eq:nonuniform_sample_size}) and form the sub-sampled matrix $ \H $ as in \eqref{eq:subsampled_H}. 
	For Problem \eqref{eq:obj_sum} (or \eqref{eq:obj_sum_ERM}), under Assumption \ref{assumption:a1}, Conditions \ref{condition:SARC_sufficient_descent} and \ref{condition:SARC_sufficient_descent_strict}, Algorithm \ref{alg:SARC_fg} terminates in at most  $T \in \bigO (\max\{\epsilon_g^{-3/2}, \epsilon_H^{-3}\})$ iterations, upon which, with probability $ 1-\delta $, we have that $\|\nabla F(\x)\| \le \epsilon_g$, and $\lambda_{\min} (\nabla^2 F(\x)) \ge -\left(\epsilon + \epsilon_H\right)$.
%	\begin{align*}
%	\|\nabla F(\x)\| \le \epsilon_g, \quad \text{and} \quad \lambda_{\min} (\nabla^2 F(\x)) \ge -\left(\epsilon + \epsilon_H\right).
%	\end{align*}
\end{theorem}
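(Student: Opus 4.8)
The plan is to reduce the probabilistic claim to the deterministic optimal bound of Theorem \ref{theorem:SARC_main_det_optimal} by conditioning on a global ``good event'' on which every sub-sampled Hessian is sufficiently accurate, and then to control the probability of that event by a union bound. First I would fix the per-iteration accuracy $ \epsilon = \min\{\epsilon_{0}, \zeta \epsilon_{g}\} $ exactly as in Theorem \ref{theorem:SARC_main_det_optimal}, and invoke Lemma \ref{lemma:uniform} (or Lemma \ref{lemma:nonuniform}) with failure tolerance $ \delta_{0} = \delta \min\{\epsilon_{g}^{3/2}, \epsilon_{H}^{3}\} $. With the sample size \eqref{eq:uniform_sample_size} (or \eqref{eq:nonuniform_sample_size}) evaluated at $ (\epsilon,\delta_{0}) $, each single construction of $ \H_{t} $ satisfies the stronger bound \eqref{eq:Hessian_accuracy_H2} — and hence both the accuracy \eqref{eq:Hessian_accuracy_H} and, via $ K_{H} = K_{\max} $ (resp.\ $ K_{H} = \widehat{K} + \epsilon $), the boundedness \eqref{eq:Hessian_boundedness_H} — with probability at least $ 1-\delta_{0} $. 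Thus at each iteration Condition \ref{condition:Hessian_approximation_H} holds with probability at least $ 1-\delta_{0} $.

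The key device for avoiding circularity between the random stopping time and the union bound is to observe that, on the event $ \mathcal{E}_{t} $ that $ \H_{t} $ satisfies Condition \ref{condition:Hessian_approximation_H}, the $ t $-th iteration of Algorithm \ref{alg:SARC_fg} is deterministically indistinguishable from an iteration of the exact-analysis setting. Therefore, conditioned on $ \mathcal{E} \triangleq \bigcap_{t} \mathcal{E}_{t} $, the entire run is governed by the deterministic analysis, so that — using Conditions \ref{condition:SARC_sufficient_descent} and \ref{condition:SARC_sufficient_descent_strict}, which are assumed — Theorem \ref{theorem:SARC_main_det_optimal} guarantees termination after at most $ T_{\max} \in \bigO(\max\{\epsilon_{g}^{-3/2}, \epsilon_{H}^{-3}\}) $ iterations. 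It hence suffices that $ \mathcal{E}_{t} $ hold only for the first $ T_{\max} $ iterations; since $ T_{\max} \delta_{0} \in \bigO(\delta) $ by the choice of $ \delta_{0} $, a union bound yields $ \Pr\left( \bigcap_{t=0}^{T_{\max}-1} \mathcal{E}_{t} \right) \ge 1 - T_{\max} \delta_{0} \ge 1-\delta $ (equivalently, setting the per-iteration probability to $ 1 - \sqrt[T_{\max}]{1-\delta} $).

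It remains to translate the approximate second-order guarantee at termination into a statement about the true Hessian. On $ \mathcal{E} $, when Algorithm \ref{alg:SARC_fg} stops, the exact gradient condition $ \|\nabla F(\x_{T})\| \le \epsilon_{g} $ follows directly from the stopping test (the gradient is not sub-sampled here), together with $ \lambda_{\min}(\H_{T}) \ge -\epsilon_{H} $. Since \eqref{eq:Hessian_accuracy_H2} gives $ \|\H_{T} - \nabla^{2} F(\x_{T})\| \le \epsilon $, Weyl's inequality yields $ \lambda_{\min}(\nabla^{2} F(\x_{T})) \ge \lambda_{\min}(\H_{T}) - \epsilon \ge -(\epsilon + \epsilon_{H}) $, which is the asserted $ (\epsilon_{g}, \epsilon + \epsilon_{H}) $-optimality. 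The main obstacle — and the only place where the argument genuinely departs from merely quoting the deterministic theorem — is precisely this decoupling of the random iteration count from the union bound via the deterministic a priori bound $ T_{\max} $; all remaining estimates are inherited verbatim from Theorem \ref{theorem:SARC_main_det_optimal} and the concentration bounds of Lemmas \ref{lemma:uniform} and \ref{lemma:nonuniform}.
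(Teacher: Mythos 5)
Your proposal is correct and follows essentially the same route as the paper: the paper omits the detailed proof but prescribes exactly this argument in Section \ref{sec:probabilistic_convergence} --- set the per-iteration failure probability to $\delta_{0} = \delta \min\{\epsilon_{g}^{3/2}, \epsilon_{H}^{3}\} \in \mathcal{O}(\delta/T_{\max})$, take a union bound over the a priori deterministic iteration bound $T_{\max}$ from Theorem \ref{theorem:SARC_main_det_optimal}, and on the resulting good event inherit the deterministic analysis pathwise, with the final $(\epsilon_{g}, \epsilon+\epsilon_{H})$-optimality following from \eqref{eq:Hessian_accuracy_H2} exactly as you argue via Weyl's inequality. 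Your explicit handling of the circularity between the random stopping time and the union bound is the right way to make the paper's sketch rigorous.
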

%-------------------------------%-------------------------------
As it can be seen, the main difference between Theorems \ref{theorem:SARC_main_prob} and \ref{theorem:SARC_main_prob_optimal} is in the solution to the sub-problem \eqref{eq:SARC_subp}. More specifically, if in addition to Condition \ref{condition:SARC_sufficient_descent}, Condition \ref{condition:SARC_sufficient_descent_strict} is also satisfied, then Theorem \ref{theorem:SARC_main_prob_optimal} gives \emph{optimal} worst-case iteration complexity.
\section{Conclusion}
\label{sec:conclusions}
We considered non-convex optimization settings and developed efficient variants of the trust region and adaptive cubic regularization methods in which both the sub-problems as well as the the curvature information are suitably approximated. For all of our proposed variants, we obtained iteration complexities to achieve approximate second order criticality, which are shown to be the same (up to some constant) as that of the exact variants.
	
As compared with previous works, our proposed Hessian approximation condition offers a range of theoretical and practical advantages. As a concrete example, we considered the large-scale finite-sum optimization problem and proposed uniform and non-uniform sub-sampling strategies as ways to efficiently construct the desired approximate Hessian. We then, probabilistically, established optimal iteration complexity for variants of trust region and adaptive cubic regularization methods in which the Hessian is appropriately sub-sampled. 

In this paper, we focused on approximating the Hessian under the exact gradient information. Arguably, the bottleneck of the computations in such second-order methods involves the computations with the Hessian, e.g., matrix-vector products in the (approximate) solution of the sub-problem. In fact, the cost of the exact gradient computation is typically amortized by that of the operations with the Hessian. In spite of this, approximating the gradient in a computationally feasible way and with minimum assumptions could improve upon the efficiency of the methods proposed here. However, care has to be taken as cheaper iterations with inaccurate gradients could in fact result in more iterations overall. This could have the adverse effect of slowing down the algorithm's convergence. As a result, approximating the gradient has to be done with care to avoid such pitfalls.

Finally, we mention that our focus here has been solely on developing the theoretical foundations of such randomized algorithms. Extensive empirical evaluations of these algorithms on various machine learning applications are given in the \cite{xuNonconvexEmpirical2017}.

%\subsubsection*{Acknowledgment}
%We would like to acknowledge ARO, DARPA, and NSF for providing partial support of this work. 
\printbibliography
\section*{Appendix A: Intrinsic dimension and improving the sampling complexity \eqref{eq:nonuniform_sample_size}}
We can still improve the sampling complexity \eqref{eq:nonuniform_sample_size} by considering the intrinsic dimension of the matrix $\A^{T} |\B| \A$. Recall that for a SPSD matrix $ \A \in \mathbb{R}^{d \times d}$, the intrinsic dimension is defined as $ t(A) =  \text{tr}(\A)/\|\A\|$, where $ \text{tr}(\A) $ is the trace of $ \A $. The intrinsic dimension can be regarded as a measure for the number of dimensions where $\A$ has a significant spectrum. It is easy to see that $ 1 \leq t(\A) \leq \text{rank}(\A) \leq d $; see \cite{tropp2015introduction} for more details. Now let $t = \text{tr}(\A^{T} |\B| \A)/\|\A^{T} |\B| \A\|$ be the intrinsic dimension of the SPSD matrix $ \A^{T} |\B| \A $. 
We have the following improved sampling complexity result:

\begin{lemma}[Complexity of Non-Uniform Sampling: Intrinsic Dimension]%{lemma}{lemmatrix}
	\label{lemma:nonuniform_intrinsic}
	The result of Lemma \ref{lemma:nonuniform} holds with \eqref{eq:nonuniform_sample_size} replaced with
	\begin{align}
		\label{eq:nonuniform_sample_size_intrinsic}
		|\mathcal S| \ge \frac{16 \widehat{K}^2}{3\epsilon^2}\log\frac{8t}{\delta},
	\end{align}
	where $t = \text{tr}(\A^{T} |\B| \A)/\|\A^{T} |\B| \A\| \leq d$ is the intrinsic dimension of the matrix $ \A^{T} |\B| \A $.
\end{lemma}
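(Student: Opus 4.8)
The plan is to follow the proof of Lemma~\ref{lemma:nonuniform} verbatim through the variance computation, and only at the very last step replace the basic Operator-Bernstein inequality of \cite[Theorem~1]{gross2010note} by its intrinsic-dimension refinement from \cite{tropp2015introduction}. Thus I would keep the same independent summands $\X_j \triangleq \H_j - \A^{T} \B \A$ with $\bbE[\X_j]=0$, and reuse the bound already established there, namely $\bbE[\X_j^2] \preceq c\,\A^{T} |\B| \A$, where $c \triangleq \sum_{i}|b_i|\|\a_i\|^2 \le \widehat{K}$. Summing the $|\mathcal{S}|$ i.i.d.\ copies, the total matrix variance $\V \triangleq \sum_{j}\bbE[\X_j^2]$ then obeys the semidefinite upper bound $\V \preceq \M \triangleq |\mathcal{S}|\,c\,\A^{T} |\B| \A$, which is the object I would feed into the intrinsic-dimension bound.

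Two elementary facts turn $\M$ into the right input. First, since the intrinsic dimension is invariant under multiplication by a positive scalar, $\text{intdim}(\M) = \text{tr}(\A^{T} |\B| \A)/\|\A^{T} |\B| \A\| = t$, precisely the quantity in the statement. Second, $\|\M\| = |\mathcal{S}|\,c\,\|\A^{T} |\B| \A\| \le |\mathcal{S}|\,\widehat{K}^2$, using $c\le\widehat{K}$ and $\|\A^{T} |\B| \A\| \le \widehat{K}$ from the proof of Lemma~\ref{lemma:nonuniform} (and $\|\A^{T} \B \A\|\le\|\A^{T} |\B| \A\|$). I would also record the uniform bound $\|\X_j\| \le c + \|\A^{T} \B \A\| \le 2\widehat{K} \triangleq R$, which holds because each $\H_j = b_i\a_i\a_i^{T}/p_i$ is rank-one with spectral norm exactly $c$.

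Applying the Hermitian intrinsic-dimension matrix Bernstein inequality of \cite{tropp2015introduction} to $\pm\X$ and taking a union bound over the two signs (to pass from $\lambda_{\max}$ to the spectral norm, which accounts for the factor $8$ rather than $4$) would yield, for $\epsilon|\mathcal{S}|$ above the mild threshold $\sqrt{\|\M\|}+R/3$,
\begin{align*}
\Pr\Big(\|\H - \A^{T} \B \A\| \ge \epsilon\Big) = \Pr\Big(\|\X\| \ge \epsilon|\mathcal{S}|\Big) \le 8\,t\,\exp\!\left(\frac{-\epsilon^2 |\mathcal{S}|}{2\widehat{K}^2 + (4/3)\,\widehat{K}\,\epsilon}\right).
\end{align*}
The one delicate point I anticipate is that intrinsic dimension is \emph{not} monotone under $\preceq$, so substituting the exact variance $\V$ by the upper bound $\M$ cannot be done naively; it must instead be justified through the monotonicity of the underlying trace-exponential functional that drives the proof of the intrinsic-dimension bound in \cite{tropp2015introduction}. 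This is the main obstacle, and the only place where more than bookkeeping is required. Finally, in the natural regime $\epsilon \le \widehat{K}$ the denominator is at most $(10/3)\widehat{K}^2$, so requiring the right-hand side to be at most $\delta$ and solving for $|\mathcal{S}|$ gives the claimed bound \eqref{eq:nonuniform_sample_size_intrinsic}, with the ambient dimension $d$ of Lemma~\ref{lemma:nonuniform} now replaced by the intrinsic dimension $t \le d$.
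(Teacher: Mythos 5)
Your proposal matches the paper's proof essentially step for step: the same variance majorant $|\mathcal{S}|\,c\,\A^{T}|\B|\A$, the same scale-invariance observation identifying its intrinsic dimension with $t$, the same uniform bound on the summands (your derivation via $\|\H_j\| = c$ exactly is in fact a little cleaner than the paper's expansion giving $\|\X_j\|\le 2c$), and the same application of the intrinsic-dimension matrix Bernstein inequality \cite[Theorem 7.7.1]{tropp2015introduction} to $\pm\X$ with a union bound producing the factor $8t$, followed by the same algebra (your intermediate constant $10/3$ versus the paper's $16/3$ still yields the claimed sample size). The one ``delicate point'' you flag is actually a non-issue: Tropp's theorem is stated directly in terms of a semidefinite majorant of the variance, with the intrinsic dimension taken of that majorant itself, so feeding in $\M = |\mathcal{S}|\,c\,\A^{T}|\B|\A$ requires no additional justification about monotonicity.
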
	

\begin{proof}
	It is easy to see that $\text{Var}(\X) = \bbE(\X^{2})  = \sum_{j=1}^{|\mathcal{S}|} \bbE(\X_{j}^{2}) \preceq |\mathcal{S}| c \A^{T} |\B| \A$, where $ \X $ and $c$ are given in the proof of Lemma \ref{lemma:nonuniform}. For $\H_{j} = \frac{b_{i}}{p_{i}} \a_{i} \a_{i}^{T}$, we have
	\begin{align*}
		&\lambda_{\max}(\X_{j})\leq \|\X_{j}\| = \| \frac{b_{i}}{p_{i}} \a_{i} \a_{i}^{T} - \A^{T} \B \A \|  = \|\left(\frac{1-p_{i}}{p_{i}}\right) b_{i} \a_{i} \a_{i}^{T}  - \sum_{j \neq i} b_{j} \a_{j} \a_{j}^{T} \| \\
		&~ \leq \left(\frac{1-p_{i}}{p_{i}}\right) |b_{i}| \|\a_{i}\|^{2}  + \sum_{j \neq i} |b_{j}| \|\a_{j}\|^{2} = \left(1-p_{i}\right) \sum_{i=1}^{n} |b_{j}| \|\a_{j}\|^{2} + \sum_{j \neq i} |b_{j}| \|\a_{j}\|^{2} \\
		&~~= 2 \sum_{j \neq i} |b_{j}| \|\a_{j}\|^{2} \leq 2 \sum_{i = 1}^{n} |b_{j}| \|\a_{j}\|^{2} = 2 c.
	\end{align*}
	Hence, if $\epsilon |\mathcal{S}| \geq \sqrt{|\mathcal{S}| c \|\A^{T} |\B| \A\|}  + 2 c/3$, we can apply Matrix Bernstein using the intrinsic dimension \cite[Theorem 7.7.1]{tropp2015introduction} to
	get for $\epsilon \leq 1/2$
	\begin{align*}
		\Pr\left( \lambda_{\max}(\X) \geq \epsilon |\mathcal{S}| \right) &\leq 4 t \exp\left\{\frac{-\epsilon^2 |\mathcal{S}|}{2 c \|\A^{T} |\B| \A\| + 4 c \epsilon/3}\right\}\leq 4 t \exp\left\{\frac{-3 \epsilon^2 |\mathcal{S}|}{16 c^{2}}\right\}.
	\end{align*}
	Applying the same bound for $\Y_{j} = - \X_{j}$ and $\Y = \sum_{j=1}^{s} \Y_{j}$, followed by the union bound, we get the desired result.
	\qed
\end{proof}

\section*{Appendix B: Computation of Approximate Negative Curvature Direction}
Throughout our analysis, we assume that, if a sufficiently negative curvature exists, i.e., $ \lambda_{\min}(\H) \leq -\epsilon_{H}$ for some $ \epsilon_{H} \in (0,1) $, we can approximately compute the corresponding negative curvature direction vector $ \u $, i.e., $\lin{\u, \H \u} \leq -  \nu \epsilon_{H} \|\u\|^{2}$, for some $ \nu \in (0,1) $. We note that this can be done efficiently by applying a variety of methods such as Lanczos \cite{kuczynski1992estimating} or shift-and-invert \cite{garber2016faster} on the SPSD matrix $ \tilde{\H} = K_{H} - \H $. These methods only employ matrix vector products and, hence, are suitable for large scale problems. More specifically, with any $ \kappa \in (0,1) $, these methods using $ \mathcal{O}(\log(d/\delta)\sqrt{K_{H}/\kappa}) $ matrix-vector products and with probability $ 1-\delta $, yield a vector $ \u $ satisfying
$K_{H}\|\u\|^{2} - \lin{\u, \H \u}  = \lin{\u, \tilde{\H} \u} \geq \kappa \lambda_{\min}(\tilde{\H}) \|\u\|^{2} = \kappa (K_{H} - \lambda_{\min}(\H)) \|\u\|^{2}$. Rearranging, we obtain
$\lin{\u, \H \u} \leq (1-\kappa) K_{H} \|\u\|^{2} + \kappa \lambda_{\min}(\H) \|\u\|^{2}$. Setting 
$1 > \nu = 2\kappa \geq (2 K_{H})/(2 K_{H} + \epsilon_{H})$, gives $\lin{\u, \H \u} \leq -\nu \epsilon_{H} \|\u\|^{2}$. %See \cite{carmon2016accelerated} for another application of this technique in obtaining approximate negative curvature direction for non-convex optimization. 

% -----------------------------------------------------------------------
%%%%%%%%%%%%%%%%%%%%%%%%%%%%%% APPENDIX %%%%%%%%%%%%%%%%%%%%%%%%%%%%%%%%%
% -----------------------------------------------------------------------
\end{document}